\newcommand{\ep}[0]{\epsilon}
\newcommand{\mat}[4]{\left(\begin{array}{cc}#1&#2\\#3&#4\end{array}\right)}
\newcommand{\ov}[1]{\frac{1}{#1}}
\newcommand{\f}[1]{\mathbb{#1}}
\newcommand{\ar}[1]{\mathbb{R}^{#1}}
\newcommand{\mods}[1]{\mathfrak{M}_{\text{#1}>0}}
\newcommand{\modse}[1]{\mathfrak{M}_{\text{#1}\geq0}}
\newcommand{\zco}[2]{H^{#2}(#1,\f{Z})}
\newcommand{\twoco}[2]{H^{#2}(#1,\f{Z}_2)}
\newcommand{\dk}[1]{\text{dim}(\text{ker}(#1))}
\newcommand{\bs}{\backslash}
\newcommand{\du}[2]{{\underline{#1},\underline{#2}}}
\numberwithin{equation}{section}
\newtheoremstyle{fancy1}{10pt}{10pt}{\itshape}{12pt}{\textsc\bgroup}{.\egroup}{8pt}{
}
\newtheoremstyle{fancy2}{10pt}{10pt}{}{12pt}{\itshape}{.}{8pt}{ }
\theoremstyle{fancy1}
\newtheorem{lem}[equation]{Lemma}
\newtheorem{thm}[equation]{Theorem}
\newtheorem*{thm*}{Theorem}
\newtheorem{main}{Theorem}
\newtheorem*{main*}{Theorem}
\newtheorem*{cor*}{Corollary}
\newtheorem*{prop*}{Proposition}
\newtheorem*{problem*}{Problem}
\newtheorem{maincor}[main]{Corollary}
\theoremstyle{fancy2}
\newtheorem{definition}[equation]{Definition}
\newtheorem*{rems*}{Remarks}
\newtheorem*{rem*}{Remark}
\newtheorem*{example*}{Example}
\newcommand{\cref}[1]{Corollary~\ref{#1}}
\newcommand{\lref}[1]{Lemma~\ref{#1}}
\newcommand{\tref}[1]{Theorem~\ref{#1}}
\newcommand{\sref}[1]{Section~\ref{#1}}
\newcommand{\C}{{\mathbb{C}}}
\newcommand{\R}{{\mathbb{R}}}
\newcommand{\Z}{{\mathbb{Z}}}
 \newcommand{\Q}{{\mathbb{Q}}}
\newcommand{\SO}{\ensuremath{\operatorname{SO}}}
\renewcommand{\O}{\ensuremath{\operatorname{O}}}
\newcommand{\Spin}{\ensuremath{\operatorname{Spin}}}
\newcommand{\Pin}{\ensuremath{\operatorname{Pin}}}
\newcommand{\K}{\ensuremath{\operatorname{K}}}
\def\con#1=#2(#3){#1 \equiv #2 \bmod{#3}}
\newcommand{\diag}{\ensuremath{\operatorname{diag}}}
\renewcommand{\sec}{\ensuremath{\operatorname{sec}}}
\begin{document}

\title{ Moduli Spaces of Nonnegatively Curved metrics on  Exotic Spheres}

\author{McFeely Jackson Goodman}
\address{University of California, Berkeley}
\email{mjgoodman@berkeley.edu}
\thanks{This research was partially supported by National Science Foundation grant  DMS-2001985}

\begin{abstract}
We show that the moduli space of nonnegatively curved metrics on each member of a large class of 2-connected 7-manifolds, including each smooth manifold homeomorphic to  \(S^7\),  has infinitely many connected components.   The components are distinguished using the Kreck-Stolz \(s\)-invariant computed for metrics constructed by Goette, Kerin and Shankar.  The invariant is computed by extending each metric to the total space of an orbifold disc bundle and applying generalizations of the Atiyah-Patodi-Singer index theorem for orbifolds with boundary.  We develop methods for computing characteristic classes and integrals of characteristic forms appearing in index theorems for orbifolds, in particular orbifolds constructed using Lie group actions of cohomogeneity one.     
\end{abstract}

\maketitle
An exotic \(n\)-sphere is a smooth manifold which is homeomorphic but not diffeomorphic to \(S^n\).  %Milnor   first identified exotic 7-spheres, and Eells and Kuiper showed that ignoring orientation, there are 14 such diffeomorphism types. 
When \(n=7\) there are 14  diffeomorphism types of exotic spheres (ignoring orientation). Ten,  the  Milnor spheres, are represented by total spaces of \(S^3\) bundles over \(S^4\) and four are not.  Given that \(S^n\) is the prototypical example of positive curvature, one asks whether any exotic spheres admit Riemannian metrics satisfying similar curvature conditions. After Gromoll and Meyer showed that one Milnor sphere admits a nonnegatively curved biquotient metric, Grove and Ziller  used cohomogeneity one actions of Lie groups to construct metrics of nonnegative curvature on all of the Milnor spheres \cite{GZ}. 

 More recently, for each pair of triplets of integers \(\underline{a}=(a_1,a_2,a_3)\) and \(\underline{b}=(b_1,b_2,b_3)\)  %such that each integer is congruent to \(1\) mod 4 and gcd\((a_1,a_2\pm a_3)=\text{gcd}(b_1,b_2\pm b_3)=1\), 
 satisfying certain gcd conditions (see \eqref{cond1} and \eqref{cond2}), Goette, Kerin, and Shankar \cite{GKS} generalized the Grove-Ziller construction to identify a 2-connected 7-manifold \(M_{\underline{a},\underline{b}}\) admitting a metric of nonnegative sectional curvature.  Each exotic 7-sphere (including those which are not Milnor spheres) is diffeomorphic to some \(M_\du{a}{b}\).
 
 In the present paper we study the range of nonnegatively curved metrics on each \(M_\du{a}{b}\), and each exotic sphere. We consider the moduli space 

 \[\modse{sec}(M)=\{\text{Riemannian metrics on }M\text{ with sec}\geq0\}/\text{Diff}(M).\]    
 The diffeomorphism group acts by pulling back metrics.  We first prove  %The group \(\zco{M_{\underline{a},\underline{b}}}{4}\) is isomorphic to \(\f{Z}\) if \(n(\underline{a},\underline{b})=0\) and \(\f{Z}_{|n(\underline{a},\underline{b})|}\) otherwise, where \[n(\underline{a},\underline{b})=\ov{8}\det\mat{a_1^2}{b_1^2}{a_2^2-a_3^2}{b_2^2-b_3^2}\]

 \begin{main}\label{odd}
 	If \(a_1\) and \(b_1\) are relatively prime and \(\zco{M_\du{a}{b}}{4}\) has odd order then  \(\modse{sec}(M_{\underline{a},\underline{b}})\) has infinitely many connected components.
 \end{main}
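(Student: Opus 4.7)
The plan is to distinguish components using the Kreck--Stolz $s$-invariant, which is locally constant on the space of metrics of positive scalar curvature on a 2-connected spin 7-manifold and descends to a $\mathbb{Q}/\mathbb{Z}$-valued function on the corresponding moduli space. Since a metric of nonnegative sectional curvature on a simply connected compact manifold can be deformed continuously in $\modse{sec}$ to a nearby metric of positive scalar curvature, it is enough to produce an infinite sequence of nonnegatively curved metrics on $M_\du{a}{b}$ whose $s$-invariants are pairwise distinct in $\mathbb{Q}/\mathbb{Z}$.

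For this I would, for a fixed diffeomorphism type $M$ satisfying the two hypotheses, exhibit an infinite family of parameter choices $(\underline{a}^{(i)},\underline{b}^{(i)})$ verifying \eqref{cond1} and \eqref{cond2} such that $M_{\du{a^{(i)}}{b^{(i)}}}\cong M$, and then transport the GKS metric $g^{(i)}$ onto $M$ via the diffeomorphism. Both hypotheses enter here: the classification of 2-connected spin 7-manifolds (Kreck--Stolz, Crowley--Nordström) identifies the diffeomorphism type in terms of $\zco{M}{4}$, the linking form, and a handful of $\mathbb{Q}/\mathbb{Z}$-valued invariants, and the odd-order condition on $\zco{M}{4}$ together with $\gcd(a_1,b_1)=1$ is precisely what is needed to pin down the $2$-primary data so that infinitely many parameter choices realize the same $M$. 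A convenient starting point is to vary a single parameter along a congruence-constrained arithmetic progression preserving the topological invariants.

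The heart of the proof is the computation of $s(M,g^{(i)})$. Because $M_\du{a}{b}$ does not naturally bound a smooth spin 8-manifold carrying a product metric near the boundary, I would instead cap off the cohomogeneity one structure at one end by an orbifold disc bundle over a singular orbit, obtaining a compact spin 8-orbifold $W^{(i)}$ with $\partial W^{(i)}=M$ and a Riemannian metric that is a product near the boundary. The Atiyah--Patodi--Singer index theorem for orbifolds with boundary, applied to both the Dirac and signature operators, then expresses $s(M,g^{(i)})$ as a sum of bulk integrals of characteristic forms of $W^{(i)}$, Kawasaki-type contributions from the non-manifold strata of $W^{(i)}$, and boundary $\eta$-invariants that cancel in the particular linear combination defining $s$. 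One finally shows that the resulting rational expression in $(\underline{a}^{(i)},\underline{b}^{(i)})$ takes infinitely many values modulo $\mathbb{Z}$ along the chosen family, which is an elementary arithmetic matter once the formula is in hand.

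The main obstacle is the orbifold computation itself: one must build the characteristic class machinery for cohomogeneity one orbifolds used here, express the Pontryagin and Euler forms of $W^{(i)}$ explicitly in terms of $(\underline{a},\underline{b})$, and evaluate the Kawasaki fixed-point contributions at the singular strata of the orbifold action. This is the technical content announced in the abstract, and producing a closed formula for $s(M,g^{(i)})$ whose parameter dependence is explicit enough to establish unboundedness modulo $\mathbb{Z}$ is where essentially all of the work lies; by contrast, the deformation-invariance of $s$ and the reduction to this computation are largely formal.
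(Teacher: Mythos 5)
Your overall strategy matches the paper's: compute the Kreck--Stolz invariant for the GKS metrics via the orbifold APS theorem applied to an orbifold disc bundle, produce an infinite family of parameter choices realizing the same diffeomorphism type (via the Crowley-style classification by $\mu$, the linking form and $p_1$, where the odd-order and $\gcd(a_1,b_1)=1$ hypotheses enter), and show the invariant takes infinitely many values.

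However, there is a genuine conceptual error in how you use the $s$-invariant, and it is repeated consistently enough that it would break the argument as you've written it. You state that $s$ ``descends to a $\mathbb{Q}/\mathbb{Z}$-valued function on the corresponding moduli space,'' and your plan is to exhibit metrics ``whose $s$-invariants are pairwise distinct in $\mathbb{Q}/\mathbb{Z}$'' and to show the closed formula ``takes infinitely many values modulo $\mathbb{Z}$.'' This cannot happen: $s(M,g)\in\mathbb{Q}$, and its reduction mod $\mathbb{Z}$ is precisely the Eells--Kuiper invariant $\mu(M)$, which is a diffeomorphism invariant of $M$ and therefore \emph{constant} across all metrics on a fixed $M$. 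If the $s$-values of your family were pairwise distinct in $\mathbb{Q}/\mathbb{Z}$, the $\mu$-values would differ and the manifolds $M_{\underline{a}^{(i)},\underline{b}^{(i)}}$ would \emph{not} be diffeomorphic, contradicting the premise of the argument. The invariant that separates components of $\modse{sec}$ is $|s|\in\mathbb{R}_{\ge 0}$ (or $s\in\mathbb{Q}$), which is locally constant on $\mods{scal}$ and changes only by sign under diffeomorphism; the paper's Lemma~\ref{comps} is exactly the statement needed. Concretely, in the paper one arranges that $\mu$, the linking form, and $p_1$ (hence, by Crowley's classification for odd $H^4$, the diffeomorphism type) are \emph{constant} along the family $(\underline{a}_i,\underline{b}_i)$, while $s(M_{\du{a_i}{b_i}},g_{\du{a_i}{b_i}})$ is a nonconstant polynomial in $i$ with constant class mod $\mathbb{Z}$; the infinitely many distinct values of $|s|$ then give infinitely many components. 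You should replace every occurrence of ``distinct in $\mathbb{Q}/\mathbb{Z}$'' with ``distinct in $\mathbb{Q}$'' (equivalently, distinct values of $|s|$), and correspondingly state that the reduction mod $\mathbb{Z}$ must be held fixed, not varied. With that correction, your outline is essentially the paper's proof.

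One smaller imprecision: you say $M_{\du{a}{b}}$ ``does not naturally bound a smooth spin 8-manifold carrying a product metric near the boundary.'' It does bound smooth spin 8-manifolds (the 7-dimensional spin cobordism group vanishes); the reason for the orbifold disc bundle is geometric, namely that it inherits a product-like metric with $\sec\ge 0$ and $\mathrm{scal}>0$, forcing $\mathrm{ind}(D_h)=0$ and making the formula computable in terms of cohomogeneity-one data.
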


Dessai \cite{D} and the author \cite{G} independently showed that for each Milnor sphere \(\Sigma\), including \(S^7\), Grove-Ziller metrics represent infinitely many path components of \(\modse{sec}(\Sigma)\).  The results of \cite{GZ,GKS} show that each exotic spheres is diffeomorphic to some \(M_{\underline{a},\underline{b}}\) with \(b_1=1\). \tref{odd} then implies

 	\begin{maincor}\label{thm}
 	If \(\Sigma\) is a smooth manifold homeomorphic to \(S^7\)  then \(\modse{sec}(\Sigma)\) has infinitely many connected components.  
 \end{maincor}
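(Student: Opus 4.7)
The plan is to derive the corollary directly from Theorem~\ref{odd} by exhibiting, for every $\Sigma$ homeomorphic to $S^7$, a pair $(\underline{a},\underline{b})$ satisfying the hypotheses of that theorem for which $M_{\underline{a},\underline{b}}$ is diffeomorphic to $\Sigma$. The key input, quoted from \cite{GZ,GKS} in the paragraph preceding the corollary, is that every smooth manifold homeomorphic to $S^7$ is diffeomorphic to some $M_{\underline{a},\underline{b}}$ with $b_1=1$; this handles both Milnor and non-Milnor exotic spheres as well as the standard sphere.

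First I would fix such a presentation $\Sigma \cong M_{\underline{a},\underline{b}}$ with $b_1=1$. Then $\gcd(a_1,b_1)=\gcd(a_1,1)=1$ automatically, verifying the first hypothesis of Theorem~\ref{odd}. Second, since $\Sigma$ is homeomorphic to $S^7$, we have
\[
H^4(M_{\underline{a},\underline{b}};\mathbb{Z}) \;\cong\; H^4(\Sigma;\mathbb{Z}) \;\cong\; H^4(S^7;\mathbb{Z}) \;=\; 0,
\]
which is the trivial group and thus has order $1$, trivially odd. This verifies the second hypothesis of Theorem~\ref{odd}.

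Applying Theorem~\ref{odd} to this $M_{\underline{a},\underline{b}}$ then yields that $\modse{sec}(M_{\underline{a},\underline{b}})$ has infinitely many connected components. Since any diffeomorphism $\Sigma\to M_{\underline{a},\underline{b}}$ induces a homeomorphism of moduli spaces $\modse{sec}(\Sigma)\to\modse{sec}(M_{\underline{a},\underline{b}})$ by pullback of metrics, the conclusion transports back to $\Sigma$.

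In this setup there is no genuine obstacle at the level of the corollary itself: all the work lives in Theorem~\ref{odd}, whose hard part is the computation of the Kreck--Stolz $s$-invariant on Goette--Kerin--Shankar metrics via orbifold Atiyah--Patodi--Singer on an extending disc bundle. The only point requiring care in this derivation is checking that a $b_1=1$ representative is indeed available for \emph{every} homotopy $7$-sphere (including the four non-Milnor ones), which is exactly the content cited from \cite{GZ,GKS}; once that is granted, the remaining verifications are immediate.
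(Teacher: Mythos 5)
Your proposal is correct and takes essentially the same route as the paper: both reduce Corollary B to Theorem A by invoking the fact from \cite{GZ,GKS} that every homotopy $7$-sphere is diffeomorphic to some $M_{\underline{a},\underline{b}}$ with $b_1=1$, and then checking that $\gcd(a_1,1)=1$ and that $H^4(M_{\underline{a},\underline{b}};\mathbb{Z})\cong H^4(S^7;\mathbb{Z})=0$ is (trivially) of odd order. The paper, in Section~6, additionally names explicit representing families --- $M_k=M_{(-3,-3,1),(1,4k+1,4k+1)}$ for the non-Milnor exotic spheres and $M_{(1,q_-,p_-),(1,q_+,p_+)}$ for the Milnor spheres --- whereas you argue abstractly that any such presentation works; this is only a cosmetic difference, since both arguments rest on the same cited classification result.
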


 	%Indeed, for each exotic 7-sphere \(\Sigma\) one can construct nonnegatively curved metrics on infinitely many manifolds \(M_{\underline{a},\underline{b}}\) diffeomorphic to \(\Sigma.\) Those metrics can be pulled back to \(\Sigma,\) where they represent points in the the moduli space 
%	\[\modse{sec}(\Sigma)=\{\text{Riemannian metrics on }\Sigma\text{ with sec}\geq0\}/\text{Diff}(M).\]    
	%Here the diffeomorphism group acts by pulling back metrics.  %Other moduli spaces can be constructed by replacing sec\(\geq0\) with another curvature condition.  Using  the Ricci flow, each nonnegatively curved metric on a simply connected manifold is arbitrarily close to a metric with positive Ricci curvature. 

	For those \(M_{\underline{a},\underline{b}}\) which do not satisfy the conditions of \tref{odd}, the diffeomorphism type is given up to finite ambiguity, leading to the following existence result, which nonetheless describes novel topological types with the same moduli space property.
	
	\begin{main}\label{thmB}
	For each \((\underline{a},\underline{b})\) such that \(\zco{M_\du{a}{b}}{4}\) is finite there exists \((\underline{a'},\underline{b'})\) such that \(\modse{sec}(M_{\underline{a'},\underline{b'}})\) has infinitely many connected components,  \(\mu(M_{\underline{a},\underline{b}})=\mu(M_{\underline{a'},\underline{b'}})\) , and there exists an isomorphism  \(H^4(M_{\underline{a},\underline{b}},\f{Z})\to H^4(M_{\underline{a'},\underline{b'}},\f{Z})\) preserving the linking form and the first Pontryagin class.
		\end{main}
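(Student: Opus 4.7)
The plan is to realize the prescribed invariants of $M_{\underline{a},\underline{b}}$ by a second parameter pair $(\underline{a}',\underline{b}')$ for which the infinite-components mechanism underlying \tref{odd} applies to $M_{\underline{a}',\underline{b}'}$, even though $(\underline{a}',\underline{b}')$ itself need not equal $(\underline{a},\underline{b})$ and need not yield a diffeomorphic manifold.

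I would first record explicit formulas for the four quantities at stake — the order of $H^4(M_{\underline{a},\underline{b}},\mathbb{Z})$ together with its linking form, the first Pontryagin class $p_1$, and the $\mu$-invariant — in terms of $(\underline{a},\underline{b})$, using the cohomogeneity-one construction of Goette--Kerin--Shankar. These formulas, combined with the Kreck--Stolz classification of $2$-connected $7$-manifolds, identify the data that must be preserved and the remaining ambiguity to be exploited.

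Given the four invariants as a target, the heart of the argument is a Diophantine construction: produce $(\underline{a}',\underline{b}')$ satisfying the gcd conditions \eqref{cond1}--\eqref{cond2} such that the four invariants agree and additionally $\gcd(a_1',b_1')=1$. The strategy is to multiply a coordinate of $\underline{a}$ by a suitable prime coprime to $b_1'$ and to $|H^4|$, then to compensate in the remaining entries so that the formulas from the previous step yield the target invariants; with six integer degrees of freedom against a bounded list of constraints, such a choice is expected to exist. Once $\gcd(a_1',b_1')=1$ is secured, the $s$-invariant machinery of \tref{odd} — extending the metric to an orbifold disc bundle and invoking the orbifold Atiyah--Patodi--Singer index theorem — applies verbatim to $M_{\underline{a}',\underline{b}'}$ when $|H^4|$ has odd order, producing infinitely many $s$-invariant values and hence infinitely many components of $\mathfrak{M}_{\sec\geq0}(M_{\underline{a}',\underline{b}'})$. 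When $|H^4|$ is even, one cannot reduce to the odd-order case through the isomorphism on $H^4$; instead, one carries the $s$-invariant computation through with the secondary Kreck--Stolz invariants kept fixed along a subfamily of GKS parameters, which ensures the infinite family of metrics lands on a single diffeomorphism type.

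The main obstacle is the simultaneous compatibility in the second step: matching the prescribed invariants while also imposing the coprimality condition and retaining an infinite $s$-invariant image in the resulting family. I expect this to require an explicit arithmetic argument, combining a primes-in-arithmetic-progressions type existence result for the chosen coordinate with an unboundedness check on the $s$-invariant formula as the remaining parameters vary. The even-$|H^4|$ case is the most delicate, since then one must also hold the secondary Kreck--Stolz invariant fixed across the subfamily used to distinguish components.
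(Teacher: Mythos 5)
Your proposal diverges from the paper's argument in a way that opens a genuine gap, so let me describe both the difference and the gap.

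The paper does \emph{not} try to change $(a_1,b_1)$ to become coprime and then invoke \tref{odd}. Instead, it constructs an explicit one-parameter linear family
\[
\underline{a}_i=(a_1,\ a_2+a_1^2(b_3-b_2)i,\ a_3+a_1^2(b_3-b_2)i),\qquad
\underline{b}_i=(b_1,\ b_2+b_1^2(a_3-a_2)i,\ b_3+b_1^2(a_3-a_2)i),
\]
which leaves $a_1,b_1$ (and hence the gcd and the structure of the orbifold base $B_{a_1,b_1}$) untouched, and manifestly preserves $n(\du{a}{b})$, the linking form via the generators $\mathbf{1}_{\du{a_i}{b_i}}$, and $\mu$ along a sublattice $i\in 2^5\cdot7\,n\,a_1^2b_1^2\,\Z$, while making the $s$-invariant a nontrivial quadratic-squared polynomial in $i$. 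It then applies a pigeonhole argument: among the finitely many Crowley $q$-functions on the finite group $\zco{M}{4}$, some diffeomorphism type is hit infinitely often, giving an infinite diffeomorphic subfamily $S$. Any $j\in S$ serves as $(\underline{a}',\underline{b}')$, and the unbounded $|s|$-values on $S$ give the infinitely many components. This pigeonhole step — exploiting that diffeomorphism types are determined up to \emph{finite} ambiguity by $\mu$ and the order of $\zco{M}{4}$ — is the idea your proposal is missing.

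The gap in your plan is concrete. You propose to arrange $\gcd(a_1',b_1')=1$ by a Diophantine change of parameters and then run the \tref{odd} machinery verbatim. But \tref{odd} needs \emph{both} coprimality \emph{and} that $\zco{M}{4}$ has odd order, and the order $|n(\du{a}{b})|$ is a diffeomorphism (indeed cohomology) invariant you are required to preserve: if $|n(\du{a}{b})|$ is even, no choice of $(\underline{a}',\underline{b}')$ compatible with the required isomorphism on $H^4$ can make it odd. So the odd-order route is structurally unavailable for exactly the manifolds for which \tref{thmB} goes beyond \tref{odd}. Your fallback — ``carry the $s$-invariant computation through with the secondary Kreck--Stolz invariants kept fixed along a subfamily'' — is not an argument: the relevant secondary invariant is Crowley's $q$, which the paper never computes; the whole point of the pigeonhole is to avoid computing it. Separately, your Diophantine step is optimistic even before reaching the even case: the constraints $n,\mathrm{lk},p_1,\mu$ are quadratic and quartic in the entries of $\underline{a},\underline{b}$, subject further to the congruences \eqref{cond1} and gcd constraints \eqref{cond2}, and there is no reason six integer degrees of freedom can hit an arbitrary target; the paper sidesteps this by moving only within a linear family that preserves all the needed data by construction.
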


Here \(\mu\) is the Eels-Kuiper diffeomorphism invariant and the linking form is a homotopy invariant (see \sref{exotic}).  In the proof of \cite[Thm. A]{GKSl}, (see Theorem 4.3 and Corollary 4.4 in that paper) the authors demonstrate that infinitely many diffeomorphism types of \(M_{\underline{a},\underline{b}}\) with finite \(H^4\) have the cohomology ring but not  the homotopy type of an \(S^3\) bundle over \(S^4\).  The obstruction is the linking form.  It follows from \tref{thmB} that such diffeomorphism types are  represented by manifolds with the property that \(\modse{sec}\) has infinitely many components.

We distinguish the  components of  \(\modse{sec}\)  using the \(s\) invariant defined by Kreck and Stolz  \cite{KS}.  For a Riemannian metric \(g\) with positive scalar curvature on a  spin manifold \(M^{4k-1}\) with vanishing real Pontryagin classes, \(s(M,g) \in\f{Q}\) is a linear combination of spectral invariants of geometric differential operators and integrals of differential forms related to curvature. \(s(M,g)\) is invariant under pullbacks by spin diffeomorphisms and continuous modifications of \(g\) which preserve positive scalar curvature.  

Strengthening a result of Carr \cite{C}, Kreck and Stolz showed that for a simply connected \(4k-1\)-manifold on which \(s\) can be defined, if \(k\geq2\) then \(\mods{scal}(M)\) is either empty or has infinitely many path components.  Also using the \(s\) invariant, Wraith showed that if \(\Sigma\)  is homeomorphic to \(S^{4k-1}\) and bounds a parallelisable manifold then \(\mods{Ric}(\Sigma)\) has infinitely many path components \cite{W} .  We note that \(S^7\) and all of the exotic 7-spheres bound parallelisable manifolds (see \cite{KM}) so Wraith's theorem applies to  the manifolds in \cref{thm}.   Using a related approach Dessai and Gonz\'{a}lez-\'{A}lvaro \cite{DG} and Wermelinger \cite{We} showed that \(\modse{sec}\) has infinitely many components for all manifolds homeomorphic to \(\ar{}P^5\) and certain manifolds homeomorphic to \(\ar{}P^7\).  
 
 We compute the \(s\) invariant by describing each \(M_\du{a}{b}\) as the total space of an orbifold \(S^3\) bundle, and therefore the boundary of an orbifold disc bundle.  We then apply the index theorem for orbifolds with boundary to write \(s\) in terms of  orbifold topological data and the index of the Dirac operator on the orbifold disc bundle.  %Many previous results describing \(\modse{sec}\) have used the structure of a manifold sphere bundle.  The manifolds \(M_\du{a}{b}\), however, are not naturally total spaces of \(S^3\) bundles.
 The primary difficulty in proving Theorems \ref{odd} and \ref{thmB} involves evaluating the contributions to the index theorem of the singular parts of the orbifold bundles. To the authors knowledge, this is the first time orbifold methods have been used to compute moduli space invariants for nonnegative curvature or related conditions.  We believe the techniques developed in this paper will be of general utility in future work computing index theoretic quantities for orbifolds, particularly orbifolds described using cohomogeneity one Lie group actions.  Such quantities can in turn be used to compute  moduli space invariants and diffeomorphism invariants.

This paper is organized as follows. In \sref{ks} we generalize the formula in \cite{G}, following \cite{KS}, to evaluate the s invariant of the total space of an orbifold \(S^3\) bundle.  Kawasaki \cite{K} and Farsi \cite{F} generalized the Atiyah-Patodi-Singer index theorem \cite{APS} to the orbifold case.   % In \sref{ks} we use those generalizations to derive an orbifold version of the formula for the s invariant found in \cite{G}.  

   % In \sref{quotientinertia} we give a global formulation of such integrals in the case that an orbifold is presented as a quotient of a manifold by a Lie group action with only finite stabilizer groups.  As any orbifold can be written in this way, the formulation of \sref{quotientinertia} will be of general use for computations on orbifolds using index theorems.  In \sref{coho1inertia} we apply the framework of \sref{quotientinertia} to certain orbifold quotients of manifolds admitting cohomogeneity one Lie group actions, again producing formulae which should be of general utility.   

%OR

The index theorems for the Dirac and signature operators involve  integrals of orbifold versions of the \(\hat{A}\) and \(L\) forms.  The forms are integrated over the inertia orbifold, an auxiliary orbifold which encodes the singular strata.  In \sref{quotientinertia} we give a global description of the inertia orbifold of a quotient of a manifold by an almost free Lie group action, and global formulas for the integrals which appear in orbifold index theorems (\lref{lgm}).   In \sref{coho1inertia} we apply these formulas to the specific case of certain orbifolds, including orbifold disc bundles, obtained from manifolds admitting cohomogeneity one Lie group actions.  The singular parts of the integrals can then be computed using the data of the cohomogeniety one action (\lref{coho1}).  
%ENDOR

In \sref{mfds}, 
we describe the 7-manifolds \(M_{\underline{a},\underline{b}}\) studied in \cite{GKS}, which are quotients of 10-manifolds \(P_\du{a}{b}\) admitting cohomogeneity actions of \(S^3\times S^3\times S^3\) and metrics of nonnegative curvature described in \cite{GZ}.  %such that each exotic 7-sphere \(\Sigma\) which is not a Milnor sphere is diffeomorphic to \(M_k\) for an infinite set of integers \(k.\)  
%Each \(M_{\underline{a},\underline{b}}\) is the total space of an orbifold \(S^3\) bundle, and admits a metric of nonnegative sectional curvature which extends to such a metric on the total space of the orbifold \(D^4\) bundle.  
%To prove \tref{thm}, we compute the \(s\) invariant of those metrics and conclude that they represent an infinite set of components of  \(\modse{sec}(\Sigma)\). 
 %The manifolds \(M_\du{a}{b}\) are not total spaces of \(S^3\) bundles, but they are total spaces of orbifold \(S^3\) bundles.  %For each \(M_k\), the total space of the corresponding  orbifold principal \(\SO(4)\) bundle is a 10 manifold which admits an \(\SO(4)\) invariant metric of  nonnegative curvature.  
 By means of Riemannian submersion  metrics on \(P_{\du{a}{b}}\)  induce nonnegatively curved metrics on the total space of the orbifold \(S^3\) bundles, which in turn extend to metrics of positive scalar curvature on the associated orbifold \(D^4\) bundles.  We use the tools of the previous sections to compute the \(s\) invariant for \(M_{\underline{a},\underline{b}}\).  The contribution of the index of the Dirac operator on the orbifold disc bundle vanishes by Lichnerowicz' theorem. % We show that for the family \(M_k\), the contributions of the singular parts of the orbifolds are independent of \(k\) and we express the \(s\) invariant as a polynomial in \(k\), proving \tref{thm}.  

%The remaining sections are dedicated to computing the contributions of the singular parts of the orbifolds to the \(s\) invariants for those \(M_{\underline{a},\underline{b}}\) which are rational cohomology spheres.  

%The disc bundles used to compute the \(s\) invariant for \(M_{\underline{a},\underline{b}}\) are of that form, and in \sref{generalm} we apply the formulas to compute the contributions of the sngular parts of the orbifold disc bundles to the \(s\) invariant, and prove \tref{thmB}.   %In a future paper, this method will be used to compute the \(s\) invariant for a larger family of the manifolds described in \cite{GKS} in order to prove a generalization of \tref{thm}.  Both the computation of \(s\) and the diffeomorphism classification of those manifolds are more complicated.

In \sref{diffinv} we use the diffeomorphism classification, due to Crowley, of 2-connected rational cohomology 7-spheres,  to find infinite families of manifolds \(M_\du{a}{b}\) which are all diffeomorphic, but such that the \(s\) invariant computed for the nonnegatively curved metrics take on infinitely many values.  We conclude that those metrics, pulled back to any \(M_\du{a}{b}\) in the family, will represent infinitely many distinct components of \(\modse{sec}.\)  The diffeomorphism invariants  are the Eels-Kuiper invariant \(\mu,\) which can be recovered from \(s\) (reproducing a computation in \cite{GKS}) along with the linking form, which is computed in \cite{GKSl}, and the first Pontryagin class, which we compute in \sref{pontryagin}, using methods from orbifold topology, in the case where \(a_1\) and \(b_1\) are relatively prime.  If \(\zco{M_\du{a}{b}}{4}\) has odd order, these invariant determine the diffeomorphism type completely, allowing use to prove \tref{odd} and \cref{thm}.  In the general case those invariants determine diffeomorphism type up to finite ambiguity, implying \tref{thmB}.    \sref{signs} contains a detailed definition of a sign convention mentioned used in \sref{index} and \sref{quotientinertia}.

 A complete diffeomorphism classification of the manifolds \(M_\du{a}{b}\) will appear in a forthcoming work by Goette, Kerin and Shankar.

I would like to thank Wolfgang Ziller, Martin Kerin, and Ravi Shankar for invaluable conversations and guidance.

%In \cite{G}, the author computed the \(s\) invariant for nonnegatively curved metrics on the Milnor spheres as follows.  Each Milnor sphere is the total space of an \(S^3\) bundle over \(S^4\).  The corresponding principal \(\SO(4)\) bundle is a 10 manifold which admits an \(\SO(4)\) invariant metric of  nonnegative curvature described by Grove and Ziller using the structure of a cohomogeneity one Lie group action \cite{GZ}.   

%Thus the method of \cite{G} can be adapted to the orbifold setting in order to prove \tref{thm}.  The metrics on the \(S^3\) and \(D^4\) orbifold bundles are again constructed using Riemannian submersions.  

\section{Preliminaries}\label{pre}

\subsection{Orbifolds}\label{orbifolds}

We briefly summarize the properties of orbifolds which we will use.  For details, see \cite{ALR}, \cite{GKS}, \cite{K}, \cite{K2}, \cite{S}. An \emph{ orbifold chart} for a topological space \(X\) centered at \(p\in X\) consists of an open set \(V_p\subset X\), an open disc \(U_p\) in \(\ar{n}\) centered at the origin, a finite subgroup \(\Gamma_p\) of \(\O(n)\) and a \(\Gamma_p\)-invariant map \(\phi_p:U_p\to V_p\), \(\phi_p(0)=p,\)  which induces a homeomorphism between \(\Gamma_p\backslash U_p\) and \(V_p.\)  We will often reuse the notation \(\phi_p\) for that homeomorphism.  \(\Gamma_p\) is called the isotropy group of  \(p\).    An \(n\)-\emph{orbifold} has orbifold charts centered at each point, satisfying the following compatibility condition:  if \(\phi_p:U_p\to V_p\) and \(\phi_{p'}:U_{p'}\to V_{p'}\) are two orbifold charts, for each \(q\in V_p\cap V_{p'}\) there exists an open set \(V_q\subset V_p\cap V_{p'}\) containing \(q\), an orbifold chart \(\phi_q:U_q\to V_q\) centered at \(q\), and open embeddings \(\psi\), \(\psi'\) such that the following diagram commutes:

\[\begin{tikzcd}U_{p'}\arrow{d}{\phi_{p'}}&U_q\arrow{r}{\psi}\arrow{l}{\psi'}\arrow{d}{\phi_q}&U_p\arrow{d}{\phi_p}\\V_{p'}&\arrow{l}V_q\arrow{r}&V_p\end{tikzcd}\]

This compatibility condition implies that \(\psi\) is equivariant with respect to an injective homomorphism \(\theta:\Gamma_q\to \Gamma_p.\)  It follows that two orbifold charts centered at the same point will have isomorphic isotropy groups.

The orbifolds in this paper will be constructed using Lie group actions on smooth manifolds.  We first fix some notation.  Let \(G\) be a Lie group acting on the left of a space \(X.\)  For \(p\in X\), \([p]\in G\backslash X \) will denote the orbit of \(p\) (\([p]\in X/ G\) if the action was on the right).   If \(G\) acts on \(X\) and \(Y\) on the left, \(G\bs(X\times Y)\) denotes the quotient of the diagonal action \(g(x,y)=(gx,gy)\).    If \(G\) acts on \(X\) on the right and on \(Y\) on the left, we define a left action of \(G\) on \(X\times Y\) such that \(g\in G\) acts by
\[g\cdot(x,y)=(xg^{-1},gy).\]  
The quotient of \(X\times Y\) by this action will be denoted \(X\times _{G}Y.\)     If \(G\) acts on \(X\) on the left and \(H\) acts on \(X\) on the right then \(G\bs X/H\) denotes the quotient of \(X\) by the left action of \(G\times H\) described by
\[(g,h)\cdot x=gxh^{-1}.\]

\(G_x\subset G\) will denote the stabilizer of a point \(x\in X\).  We say a compact  Lie group \(G^{n-k}\) act almost freely on the left of a smooth manifold \(M^n\) if the stabilizer of each point in \(M\) is a finite subgroup of \(G\). It follows that \(X^k=G\backslash M\) is an orbifold.    The orthonormal frame bundle (see \sref{orbibundles}) of an orbifold is always a manifold so any orbifold can be described in this way.   

Lie groups will be assumed to have fixed orientations.  If \(M\) is oriented, and a connected Lie group \(G\) acts freely or almost freely on \(M,\) the vertical space of the action at a point \(p\in M\) can be identified with the Lie algebra \(\mathfrak{g}\) of \(G\) by means of the action fields.  Then the horizontal space \(H_p\) is oriented such that \(H_p\oplus\mathfrak{g}\) induces the orientation on \(T_pM.\)  The projection \(M\to G\bs M\) identifies \(H_p\) with \(T_{[p]}(G\bs M)\) (see \sref{orbibundles} for the orbifold case), orienting \(G\bs M.\)

We will use the orbifold structure of such a quotient in detail in \sref{quotientinertia}, and recall it here.  Let \(\pi:M\to X\) be the quotient map.  The slice theorem states that for each \(p\in M\)  there exists an embedded \(k\)-disc \(U_p\), called a \emph{ slice neighborhood centered at \(p\)} which contains \(p\) and is preserved by the stabilizer group \(G_p\) such that the map 
\[G\times_{G_p}U_p\to M\]
\[[g,p]\mapsto gp\]
is an equivariant diffeomorphism onto a tubular neighborhood of the \(G\) orbit of \(p.\)  It follows, as we will use often, that if \(x,y\in U_p\) and \(gx=y\) for \(g\in G\) then \(g\in G_p\).   \(\pi\) induces a homeomorphism between \(G_p\backslash U_p \) and the quotient of that tubular neighborhood by \(G\), which is \(\pi(U_p)\).  So \(U_p\)  is an orbifold chart for \(X\) centered at \([p]\)  with isotropy group \(G_p.\)  

The open embedding \(\psi\) described in the compatibility condition above are defined as follows.  If  \([q]\in\pi(U_p)\) then there exists \(g\in G\) such that \(gq\in U_p.\)  Let \(U_q\) be a slice neighborhood centered at \(q,\) shrunk if necessary such that \(\pi(U_q)\subset\pi(U_p)\), then there is a continuous map \(f:U_q\to G\) such that \(f(q)=g\) and \(f(x)x\in U_p\) for all \(x\in U_q\) (The diffeomorphisms from the slice theorem induce an embedding \(U_q\to G\times_{G_q}U_q\to G\times_{G_p}U_p\), which has a lift \(U_q\to G\times U_p\) such that \(q\) maps to \((g^{-1},gq)\).  \(f\) is the composition of that lift with projection onto \(G\) and the group inverse map.)  Then we define \(\psi:U_q\to U_p\) by \(\phi(x)=f(x)x,\) so \(\pi\circ \phi=\pi.\)     

We illustrate the injective homomorphism \(\theta:G_q\to G_p\) with respect to which \(\phi\) is equivariant.  For each \(\gamma\in G_q,\) \(f(\gamma x)\gamma f(x)^{-1}\) sends \(f(x)x\) to \(f(\gamma x)\gamma x.\)  Since both are in \(U_p,\) \(f(\gamma x)\gamma f(x)^{-1}\in G_p\) for all \(x\in U_q.\)  Since \(f\) is continuous and \(G_p\) is finite it follows that \(f(\gamma x)\gamma f(x)^{-1}\) is independent of \(x\) and equal to its value at \(q,\) which is \(g\gamma g^{-1}.\)  It follows that \(\phi(\gamma x)=g\gamma g^{-1}\phi(x)\) for all \(x\in U_q\) and \(\gamma\in G_q.\) Thus \(\theta(\gamma)= g\gamma g^{-1}.\)

%\begin{lem}\label{charts}
%	Let \(G\) be a compact Lie group acting almost freely on \(M.\)  For each \(x\in M\) and horizontal slice neighborhood \(U_x\subset M\) centered at \(x\) the projection \(U_x\to G\bs M\) is an orbifold chart for \(G\bs M\) with isotropy group \(G_x.\)  If \(y\in M,\) \(g\in G\) such that \(gy\in U_x,\), and  \(U_y\subset M\) is a horizontal slice centered at \(y\), shrinking \(U_x\) as necessary there is a continuous map \(f:U_y\to G\) such that \(f(y)=g\) and \(f(z)z\in U_x\) for all \(z\in U_y.\)  The maps \(\phi:U_y\to U_x\), \(\phi(z)=f(z)z\) define the open embeddings satisfying the compatibility condition for the orbifold charts.  If \(\gamma\in G_y,\) \(z\in U_y\),\(f(\gamma z)\gamma f(z)^{-1}=g\gamma g^{-1}.\)  The orbifold charts for \(\Lambda X\) are given by the fixed point sets \(U_x^h\) for each \(h\in G_x,\) and the open embeddings are given by \(\phi:U_y^{h}\to U_x^{ghg^{-1}}.\)    
%\end{lem}

\subsection{Orbifold Bundles}\label{orbibundles}
An orbifold bundle with fiber \(F\) is a smooth orbifold map \(\pi:Y\to X\) such that for each \(p\in X\) there is an orbifold chart \(\phi_p: U_p\to V_p\) with isotropy group \(\Gamma_p\), an action of \(\Gamma_p\) on \(U_p\times F\) such that projection onto \(U_p\) is equivariant and a \(\Gamma_p\)-invariant map \(\psi_p\) making the following diagram commute
\[\begin{tikzcd}
U_p\times F\arrow{d}{\psi_p}\arrow{r}&U_p\arrow{d}{\phi_p}\\\pi^{-1}(V_p)\arrow{r}{\pi}&V_p
\end{tikzcd}\]

  \noindent Further, \(\psi_p\) must induce a homeomorphism between \(\Gamma_p\backslash (U_p\times F)\) and \(\pi^{-1}(V_p)\).  Two such charts obey a compatibility condition similar to the one given above, described by the following commuting diagram, where \(\psi,\psi'\) are open embeddings: %where the inclusions \(\pi^{-1}(V_q)\to\pi^{-1}(V_p),\pi^{-1}(V_q)\to \pi^{-1}(V_p')\) are covered by open embedding \(U_q\times F\to U_p\times F,U_{p'}\times F.\)

  \[\begin{tikzcd}U_{p'}\times F\arrow{d}{\psi_{p'}}&\arrow{l}{\psi'}U_q\times F\arrow{r}{\psi}\arrow{d}{\psi_q}&U_p\times F\arrow{d}{\psi_p}\\\pi^{-1}(V_{p'})&\arrow{l}\pi^{-1}(V_q)\arrow{r}&\pi^{-1}(V_p)\end{tikzcd}\]

  \noindent A smooth section of \(\pi\) is a map \(s:X\to Y\) such that \(\pi\circ s= \) id.  If follows that in each trivializing chart \(s\) will be covered by an equivariant section of the bundle \(U_p\times F\to U_p.\)
 
 We say \(\pi:Y\to X\) is an orbifold vector bundle if \(F\) is a vector space and \(\Gamma_p\), as well as the transition functions defined by the compatibility condition,  act linearly on \(F\).   If \(F\) is a Lie group, and \(\Gamma_p\) and the transition functions  act on the left as elements of \(F,\) then \(\pi:X\to Y\) is an orbifold principal \(F\) bundle. In that case, if \(F\) acts on another space \(Z\), we can form an associated bundle over \(X\) with fiber \(Z\) in the usual way.   Note that if \(G\) acts almost freely on a manifold \(M,\) the slice theorem implies that \(M\to G\bs M\) is an orbifold principal \(G\) bundle.  
 
 \(\Gamma_p\) acts naturally on the tangent bundle and tensor bundles of each \(U_p,\) so we can define the tangent bundle \(TX\) and tensor bundles of \(X.\)  We can then define a Riemannian metric and the corresponding curvature tensors.  Locally they will correspond to the data of a \(\Gamma_p\)-invariant Riemannian metric on \(U_p.\)   A connection on an orbifold vector bundle is defined by a consistent choice of \(\Gamma_p\)-invariant connections on the orbifold charts \(U_p.\)  Given a Riemannian metric and an orientation, the oriented orthonormal frame bundle  \(\SO(TX)\)  of the tangent bundle  is an orbifold principal \(\SO(n)\)  bundle over \(X\). An orbifold spin structure for \(X\) is an orbifold  principal \(\Spin(n)\) bundle \(\Spin(TX)\) over \(X\) and an orbifold bundle map \(\rho:\Spin(TX)\to\SO(TX)\) which is covered in each trivializing orbifold chart by the double cover \(\Spin(n)\to \SO(n)\) on the fibers.

 \subsection{Differential forms and characteristic classes}\label{diffforms}
 The exterior derivative \(d\) can be defined on differential forms in \(\Omega(X)\), as well as integration on top degree forms using the local definition
 \[\int_{V_p}\alpha=\ov{|\Gamma_p|}\int_{U_p}\alpha_p\]
 where \(\alpha_p\) is the \(\Gamma_p\)-invariant form covering \(\alpha\) in an orbifold chart \(\phi_p:U_p\to V_p\) with isotropy group \(\Gamma_p\).  Stokes' theorem holds and the de Rahm theorem yields an isomorphism from the cohomology of \((\Omega(X),d)\) to the singular cohomology \(H^*(X,\ar{})\) of the topological space \(X\).  Using that identification we denote the cohomology class of a closed form \(\alpha\) by \([\alpha]\in H^*(X,\ar{})\). Poincare duality holds with real coefficients; in particular, for a closed and oriented \(n\)-orbifold, integration provides an isomorphism \(H^n(X,\ar{})\cong\ar{}\) (the proof in \cite{BT} for instance will apply just as in the manifold case).  The Thom isomorphism in de Rahm cohomology holds.  That is,  let \(\pi:E^{n+k}\to X^n \) be an oriented rank \(k\) orbifold vector bundle oriented such that the orientations on \(E\) and \(B\), and the standard orientation on \(\ar{k}\), agree via the identification \(T_pE\cong T_{\pi(p)}B\oplus \ar{k}.\)  Then there is a closed compactly supported \(k\)-form \(\Phi\) on \(E,\) called the Thom form, such that for all \(\alpha\in\Omega^n(X)\)
 \begin{equation}\label{iof}\int_E\pi^*(\alpha)\wedge \Phi=\int_X\alpha\end{equation}
and the map 
\[H^*(X,\ar{})\to H^*_{\text{c}}(E,\ar{})\]  
\[[\beta]\mapsto [\pi^*(\beta)\wedge\Phi]\]
is an isomorphism. Here \(H^*_{\text{c}}(E,\ar{})\) is compactly supported cohomology. The theorem can also be stated with the vector bundle \(E\) replaced by the \(k\)-disc bundle \(Z\subset E\). In that case \(\Phi\) is zero in a neighborhood of \(\partial Z\) and    \(H^*_{\text{c}}(Z,\ar{})\) is  replaced by \(H_c^*(Z,\partial Z;\ar{}).\)
 
 Given a connection \(\nabla\) on an orbifold vector bundle \(E\to X\), we can define Chern-Weil forms in \(\Omega(X)\) in terms of the curvature of the connection, including Pontryagin forms \(p_i(\nabla)\) for an arbitrary vector bundle and the Euler class \(e(\nabla)\) for an oriented vector bundle.  If \(\nabla\) is the Levi-Civita connection on \(TX\) corresponding to a Riemannian metric \(g\) on \(X\) we will replace \(\nabla\) with \(g\) in the characteristic form notation.  Characteristic forms defined for two connections on the same vector bundle will differ by an exact form, and so characteristic forms define characteristic classes in \(H^*(X,\ar{})\) which are independent of the choice of connection.  We will denote the Pontryagin and Euler classes by \(p_i(E)\) and \(e(E)\) respectively.  For a closed orbifold \(X\), integrals of characteristic forms depend only on the cohomology class, and characteristic numbers are well defined independent of connections.  We therefore suppress the connection and use the class in the notion, e.g. \(\int_Xp_i(E)\).  We recall that if \(\Phi\in \Omega(Y)\) is a Thom form for an oriented vector bundle \(\pi:Y\to X\) and \(e\in\Omega(X)\) is an Euler form, then \([\Phi]=\pi^*e(E)\in H^*(E,\ar{}).\)
 
 \subsection{Orbifold Characteristic classes}\label{orbiclasses}
 
 We briefly describe another construction for characteristic classes for orbifold principal bundles, which we will use in \sref{pontryagin}.  For every orbifold \(X\) there is a natural topological space \(BX\), defined up to homotopy equivalence, known as the classifying space, and a rational homotopy equivalence \(BX\to X\). %(the notation comes from viewing an orbifold as a topological groupoid, wherein it is a generalization of the classifying space of a Lie group).  
 See \cite{ALR} for an abstract definition.  If \(X\) is expressed as the quotient of a manifold \(M\) by  an almost free Lie group action by \(G\), and \(EG\) is the total space of the universal \(G\)-bundle, (i.e. \(EG\) is a contractible space on which \(G\) acts freely),  \(BX\) is homotopy equivalent to \(G\bs (M\times E)\) (the ``Borel construction" or ``homotopy replacement" ) with the projection to \(G\bs M\).          
 
 If \(\pi:Y\to X\) is an orbifold principal \(H-\)bundle for a Lie group \(H\), then \(BY\to BX\) is a principal \(H\) bundle in the standard sense; that is, \(H\) acts freely on \(BY\) with quotient \(BX.\)  Thus the bundle has characteristic classes in \(H^*(BX)\) pulled back from \(H^*(BH)\) in the standard way.  For instance, if \(H=\SO(n)\) we can define orbifold Pontryagin classes \(p_i^{\text{orb}}\in\zco{BX}{*}\) or Euler class  \(e^{\text{orb}}\) if \(n\) is even.  
 
 If \(X\) is in fact a manifold then \(BX\) is homotopy equivalent to \(X\) (indeed, by the above description we can take \(M=X\), \(G=\{1\}\), and \(E\) a point.) Then orbifold characteristic classes and standard characteristic classes coincide.  Otherwise, % that the fibers of the natural projection \(BX\to X\) are of the form \(E/\Gamma_p\), where \(\Gamma_p\) is the isotropy group of \(X.\)  Since \(\Gamma_p\) is finite, \(E/Gamma_p\) is rationally acylcic, from which it follows that \(BX\to X\) is a rational homotopy equivalence.   
 if \(Y\) is the frame bundle of an orbifold vector bundle \(E\to X\), the isomorphism \(H^*(X,\R)\to H^*(BX,\R)\) will map \(p_i(E)\) and \(e(E)\), as defined with Chern-Weil theory in \sref{diffforms}, to the image of \(p_i^\text{orb}\) and \(e^\text{orb}\) in \(H^*(BX,\R)\).  In this way the definitions are compatible.  This fact can be confirmed by choosing a large enough finite dimensional subset \(E_kG\) of \(EG\) such that the maps
 \[BX\simeq G\bs (M\times EG)\leftarrow G\bs (M\times E_kG)\to X=G\bs M \]
 induce isomorphisms on real cohomology  in suitable degrees and \(G\bs(M\times E_kG)\) is a manifold, where the  definitions all coincide.

  \subsection{Dirac Operators}\label{dirac}
 
 Given a Riemannian metric \(g\) and a Spin structure on an orbifold \(X\) we can define a Spinor bundle \(S\) over \(X\) and Clifford multiplication on that bundle.   The Dirac operator \(D_g\) can be defined with the usual formula on orbifold charts.  If the dimension of \(X\) is even, the spinor bundle has a decomposition \(S=S^+\otimes S^-\) and \(D_g:S^{\pm}\to S^{\mp}\).  
 
 If \(X\) is closed, \(D_g\) is a Fredholm operator, and in even dimensions the index of the Dirac operator refers to the index of \(D_g:{S^+}\to S^-,\) that is
 \[\text{ind}(D_g)=\dk{D_g|_{S^+}}-\dk{D_g|_{S^-}}.\]   
 The Lichnerowicz formula holds for \(D_g^2\) .  Integrating \(\left<D_g^2\phi,\phi\right>\) over \(X\), where \(\phi\) is a section of \(S\), we conclude just as in the manifold case that \(D_g\) has a trivial kernel if scal\((g)>0.\)  
 
 Suppose \(X\) is a compact orbifold with boundary and suppose \(\partial X\) is a manifold. Let \(h\) be a Riemannian metric on \(X\) such that there exists a collar neighborhood of \(\partial X\), diffeomorphic to \(\partial X\times I\), on which \(h\) is isometric to a product metric.  We call such a metric {\emph {product-like}.}  We can impose Atiyah-Patodi-Singer boundary conditions on sections of \(S\) such that \(D_g\) is again Fredholm operator, see \cite{APS}, \cite{F}.  Near the boundary \(X\) is a manifold and the usual arguments on the boundary  apply unchanged to prove that scal\((h)>0\) implies ker\((D_h)\) is trivial    (see for instance the argument before Theorem 3.9 in \cite{APS2} or  Lemma 1.2 in \cite{BG}).

 \subsection{Inertia Orbifold}
 In order to compute the index of a geometric operator on an orbifold \(X\), we need to define an auxiliary orbifold called the inertia orbifold \(\Lambda X\).  The connected components of \(\Lambda X\) encode the singular strata of  \(X\).  As a set, the inertia orbifold is 
 \[\Lambda X=\{(p,[g])|p\in X, [g]\text{ a conjugacy class of }\Gamma_p\}.\]
 
 We define orbifold charts for \(\Lambda X\) as follows.  Let \( U_p\xrightarrow{\phi_p}V_p\subset X\) be an orbifold chart for \(X\) centered at \(p\) with isotropy group \(\Gamma_p\).  For \(g\in \Gamma_p\) let \(U_p^g\subset U_p\) be the fixed point set of \(g\) and \(Z_{\Gamma_p}(g)\subset \Gamma_p\) the centralizer of \(g.\)  Then \(Z_{\Gamma_p}(g)\) acts on \(U_p^g\) and we define an orbifold chart 
 \[Z_{G_p}\backslash U^g_p\to \Lambda X\]
 \[[x]\mapsto (\phi_p(x),[g])\in\Lambda X\]
where \([g]\) is the conjugacy class of \(g\) in \(\Gamma_{\phi_p(x)}\).  We are using the identification 
\[\Gamma_{\phi_p(x)}\cong\{h\in \Gamma_p|hx=x\}\]
which follows from the compatibility condition for orbifold charts.  Given an open embedding \(\psi:U_q\to U_p\) as described in the compatibility conditions for the charts of \(X\) in \sref{orbifolds}, and the induced injective homomorphism \(\theta:\Gamma_q\to \Gamma_p,\) there is an open embedding \(\psi:U_q^h\to U_p^{\theta(h)}\)  for each \(h\in \Gamma_q\) satisfying the compatibility condition for the charts of \(\Lambda X.\) %If \(X=G\bs M,\) as described in \sref{orbifolds}, with charts \(U_q,\) \(U_p\) such that \(gq\in p\) for \(g\in G,\) then \(\theta(h)=ghg^{-1}\) for each \(h\in G_q,\) and the embedding of charts of \(\Lambda X\) is \(U_q^h\to U_p^{ghg^{-1}}.\)     
 
 We note three things.  First, different connected components of \(\Lambda X\) can have different dimensions.   Second, the union of connected components
 \[\{(p,[1])|p\in X\}\subset\Lambda X \]
 is simply  \(X,\) and we will reuse that notation for the subset.  We label the union of the remaining connected components, which correspond to nontrivial elements of isotropy groups, \(\Lambda_sX=\Lambda X\backslash X\).
   Finally, \(Z_{\Gamma_p}(g)\) may not act effectively on \(U_p^g,\) and so by strict definition a quotient of that group is the isotropy group of \((p,[g])\in\Lambda X.\)  Indeed, we denote the order of the ineffective kernel of \(Z_{\Gamma_p}(g)\) acting on \(U^g_p\) by \(m(p,[g])\in\f{Z}.\) \(m\) is a locally constant function on \(\Lambda X\) which arises in index theorems for orbifolds.

\subsection{Index Theorems}\label{index}

We can now define the orbifold versions of the \(L\) and \(\hat{A}\) forms, denoted by \(L_\Lambda\) and \(\hat{A}_\Lambda\) since they are differential forms on the inertia orbifold.  We define the forms locally using an orbifold chart.

Let \(\phi_p: U_p\to V_p\) be an orbifold chart centered at \(p\in X\) with isotropy group \(\Gamma_p\). Let \(h\) be a \(\Gamma_p\)-invariant Riemannian metric on \(U_p\).  Then \( U_p^g\) is an orbifold chart for \(\Lambda X\) centered at \((p,[g])\). Since \(U_p^g\) is the fixed point set of \(g\) acting on \(U_p\) we can use \(h\) to define the equivariant \(L\)-form \(L_g(h)\), a differential form on \(U^g_p\) as follows (see \cite{GKS}, \cite {BGV} Chapter 6 and \cite{LM} III.14 for details).  

\(g\) acts on the normal bundle \(NU_p^g\) of \(U_p^g\subset U_p\) by isometries.  At a point \(q\in M^\phi\) there exists a unique decomposition of \(N_qU_p^g\) into subspaces such that \(g\) acts on each subspace as \(e^{i\theta}\), with respect to a complex structure,  for some \(\theta\in (0,\pi)\).   Since \(U_p^g\) is totally geodesic, parallel translation preserves \(NU_p^g\) and commutes with \(g,\) so this decomposition corresponds to a global bundle decomposition 
\[NU_p^g=\bigoplus_jN_j\]
and a complex structure such that \(g|_{N_j}=e^{i\theta_j}\) (for simplicity, we ignore the possibility that \(\theta_j=\pi\) and \(N_{j}\) does not admit a complex structure).  Applying the splitting principle, let 
\[N_j=\bigoplus_k\lambda_{j,k}\] 
be a formal splitting into complex line bundles \(\lambda_{i,j}\) with first Chern forms \(c_1(\lambda_{j,k})=x_{j,k}.\)  Then 
\[L_g(h)=L(TU_p^g)\prod_{j,k}\coth\left(x_{j,k}+\ov{2}i\theta_j\right)\]
is a polynomial in the  Chern forms of the bundles \(N_j\), since it is invariant under permutations in the \(k\) indices.  Note also that giving \(NU_p^g\) the orientation induced by the complex structure, \(U_p^g\) inherits an orientation that is compatible with those on \(NU_p^g\) and \(TU_p.\)

The metric and connections used to define the form will be \(\Gamma_p\) invariant, so \(L_g(h)\) is \(Z_{\Gamma_p}(g)\)-invariant and  defines a differential form on \(\Lambda X\) in this chart.  We define 
\[L_\Lambda(h)=\ov{m(p,[g])}L_g(h).\]   Since \(m\) is globally defined and transition functions between orbifold charts are equivariant, the local definitions of \(L_\Lambda(h)\) are compatible and yield a global differential form on \(\Lambda X\).

Let \(U_p\) be an orbifold chart which trivializes the orthonormal frame bundle. If \(X\) is spin, the derivative action of \(\Gamma_p\) on the trivialization \(U_p\times \SO(n)\) lifts to an action of \(\Gamma_p\) on \(U_p\times \Spin(n).\)  Then

\begin{equation}\label{ahatgdef}\hat{A}_g(h)=\ep(g)\hat{A}(TU_p^g)\prod_{j,k}\ov{2}\text{csch}\left(\ov{2}x_{j,k}+\ov{2}i\theta_j\right)\end{equation}
Where \(\ep(g)\in\{1,-1\}\) depends on the action of \(g\) on \(\Spin(n)\), and the choice of \(\theta_j/2,\) which is defined only up to a an integer multiple of \(\pi.\)  The full definition of \(\ep(g)\) is given in \sref{signs}; for our purposes, we can use the fact that if \(g\in \Gamma_p\) has odd order \(r,\) then \(r\theta_j\in 2\pi,\) and 
\[\ep(g)=\prod_{j,k}\cos\left(\frac{r\theta_j}{2}\right).\]

We can now define
\[\hat{A}_\Lambda(h)=\ov{m(p,[g])}\hat{A}_g(h).\]
Note that on  \(X\subset \Lambda X,\)  \(g=1\), so \(m(p,[1])=1\), \(L_\Lambda(h)=L(h)\), and \(\hat{A}_\Lambda(h)=\hat{A}(h).\)

    We can now state the index theorems for orbifolds with boundary, in the cases we will need.  Let \(W^{4k}\) be an orbifold with boundary \(M.\) Let \(h\) be a product-like metric on \(W\) and let \(g=h|_{M}\).  Define the operator \(B_g\) on \(\Omega^{\text{even}}(M)\) by 
    \[B_g|_{\Omega^{2p}(M)}=(-1)^{k+p+1}(*d-d*)\]
    where \(*\) is the Hodge star.  If \(W\) is spin let \(D_g,D_h\) be the Dirac operators on \(M\) and \(W\) respectively.
    
    Given a self adjoint elliptic operator \(A\) on a closed manifold with spectrum \(\{\lambda_i\}\subset \ar{}\) we define a function of a complex variable \(s\in\f{C}\) by
    \[\eta(A,s)=\sum_{\lambda_i\neq0}\text{sign}(\lambda_i)|\lambda_i|^{-s}.\]
    
    \noindent The function is analytic when the real part of \(s\) is large.  Atiyah, Patodi and Singer showed that \(\eta\) can be analytically continued to a meromorphic function which is analytic at 0.  We define \(\eta(A)=\eta(A,0).\)  The following index theorems are due to Kawasaki and Farsi.  
    
    \begin{thm}\cite{F}, \cite{K}\label{orbiindex}
    	Let \(W\) be an orbifold with boundary.  Let \(h\) a product like metric on \(W\) and \(g=h|_{\partial W}.\) Then  
    \begin{equation}\label{sing}
    \text{\normalfont sign}(W)=\int_{\Lambda W}L_\Lambda(h)-\eta(B_g)
    \end{equation}	
    and if \(W\) is spin
    \begin{equation}\label{ind}
    \text{\normalfont ind}(D_h)=\int_{\Lambda W}\hat{A}_\Lambda(h)-\frac{\text{\normalfont dim}\left(\text{\normalfont  ker}(D_g)\right)+\eta(D_g)}{2}.
    \end{equation}

\end{thm}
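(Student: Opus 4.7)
The plan is to follow the heat kernel approach of Atiyah-Patodi-Singer, combining Kawasaki's local orbifold index theorem for the bulk contribution with the classical APS boundary analysis made available by the hypothesis that $\partial W$ is a manifold and $h$ is product-like. I treat the spin Dirac case; the signature case is completely parallel with $D_h$ replaced by $d+d^*$ on even forms and $D_g$ replaced by $B_g$ on the boundary.

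First, by the hypothesis on $h$, the boundary has a collar $\partial W\times[0,1)$ isometric to a product and lying entirely in the smooth locus, so one can impose APS global boundary conditions on $D_h$ as in \sref{dirac}. McKean--Singer then gives
\[
\ind(D_h)=\Str(e^{-tD_h^2})
\]
for every $t>0$, the trace being computed with respect to the APS projection. Next, decompose $W$ into the collar and the interior and express the heat kernel as a sum of a double on the cylinder plus an interior parametrix, exactly as in \cite{APS}. The cylinder contribution is a computation on a manifold; its $t\to 0$ limit is zero while its $t\to\infty$ limit yields precisely $-\tfrac12\bigl(\dim\ker D_g+\eta(D_g)\bigr)$. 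This part of the argument transports verbatim from the original APS proof because the collar is smooth.

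The interior contribution requires Kawasaki's local index theorem. On an orbifold chart $\phi_p\colon U_p\to V_p$ with isotropy $\Gamma_p$, the heat kernel of $D_h^2$ is represented by averaging the $\Gamma_p$-equivariant heat kernel on $U_p$, producing a supertrace expansion
\[
\Str(e^{-tD_h^2})\big|_{V_p}=\sum_{[g]\subset\Gamma_p}\frac{1}{|Z_{\Gamma_p}(g)|}\int_{U_p^g}\tr_s\bigl(g\cdot k_t(x,x)\bigr)\,dx.
\]
The Atiyah--Bott--Segal--Singer equivariant local index formula (in the Berline--Getzler--Vergne form) evaluates the $t\to 0$ limit of the integrand on each fixed-point stratum $U_p^g$ as the equivariant integrand $\hat A_g(h)$ defined in \eqref{ahatgdef}, with the sign factor $\epsilon(g)$ recording the choice of spin lift of $g$ as in \sref{signs}.

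The remaining step is combinatorial. Assembling the chart-by-chart contributions and using a $\Gamma_p$-equivariant partition of unity, one identifies the sum over orbits of conjugacy classes with an integral over the inertia orbifold $\Lambda W$, because the orbifold chart $Z_{\Gamma_p}(g)\backslash U_p^g$ of $\Lambda W$ at $(p,[g])$ carries ineffective kernel of order $m(p,[g])$, so integration there picks up the factor $\tfrac1{m(p,[g])|Z_{\Gamma_p}(g)|}$ that appeared above. This matches the normalization $\hat A_\Lambda(h)=\tfrac1{m(p,[g])}\hat A_g(h)$, yielding $\int_{\Lambda W}\hat A_\Lambda(h)$ as the bulk term. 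Combining with the cylinder computation gives \eqref{ind}.

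The main obstacle is the bookkeeping that identifies the sum over conjugacy classes, weighted by $|Z_{\Gamma_p}(g)|^{-1}$, with a single global integral over $\Lambda W$; this requires checking compatibility of the equivariant fixed-point data across orbifold chart overlaps using the injective homomorphisms $\theta\colon\Gamma_q\to\Gamma_p$ from \sref{orbifolds} and verifying that the ineffective kernel $m$ is globally defined. Once this is done, the signature version follows by replacing $\hat A_g$ with $L_g$ in the equivariant local formula and using that $B_g$ is the APS boundary operator of $d+d^*$ on even forms in the product-like collar, so that its $\eta$-invariant accounts for the cylinder contribution.
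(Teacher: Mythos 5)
The paper does not prove this theorem; it is cited to Kawasaki and Farsi and used as a black box. Your sketch reconstructs the standard line of argument for those references: McKean--Singer, decomposition of $W$ into the product collar (where the classical APS cylinder analysis applies verbatim, since under the paper's standing hypothesis from \sref{dirac} the boundary $\partial W$ is a manifold, so the product-like collar lies in the smooth locus) and an interior piece; then the equivariant heat trace $\tfrac{1}{|\Gamma_p|}\sum_{g\in\Gamma_p}\operatorname{Str}\bigl(g\,e^{-tD^2}\bigr)$ on orbifold charts, localization of the $g\neq 1$ terms onto fixed strata via the equivariant local index theorem, and assembly of the chart contributions into an integral over $\Lambda W$. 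That is indeed the strategy, so the architecture of your argument is sound and recovers what the cited works establish.

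One place the bookkeeping goes awry: after grouping by conjugacy classes, the equivariant heat trace produces the weight $\tfrac{1}{|Z_{\Gamma_p}(g)|}$ on each stratum, not $\tfrac{1}{m(p,[g])\,|Z_{\Gamma_p}(g)|}$ as you assert. The mechanism that forces the factor $\tfrac{1}{m}$ into the definition $\hat{A}_\Lambda=\tfrac{1}{m}\hat{A}_g$ is different from what you describe: integration over the chart $U_p^g$ of $\Lambda W$ is normalized by the \emph{effective} isotropy group there, of order $|Z_{\Gamma_p}(g)|/m(p,[g])$ since $m$ is the order of the ineffective kernel of the centralizer acting on $U_p^g$, so $\int_{\Lambda W}$ over that chart equals $\tfrac{m(p,[g])}{|Z_{\Gamma_p}(g)|}\int_{U_p^g}(\cdot)$. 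Multiplying by $\hat{A}_\Lambda=\tfrac{1}{m}\hat{A}_g$ yields exactly $\tfrac{1}{|Z_{\Gamma_p}(g)|}\int_{U_p^g}\hat{A}_g$, matching the heat-trace weight. Your stated factor would overcount by $m$; the conclusion of your paragraph is correct, but the reason the $m$-normalization in $\hat{A}_\Lambda$ is needed is the gap between $|Z_{\Gamma_p}(g)|$ and the order of the \emph{effective} isotropy group, not an extra $m$ already present in the heat-trace side. In a full write-up this is precisely where an off-by-$m$ error would propagate, so it deserves care.
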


\subsection{7 dimensional rational cohomology spheres}\label{exotic}
%Milnor first described the set of linear bundles \(S^3\to \Sigma\to S^4\) such that \(\Sigma\) is homeomorphic, but not diffeomorphic, to  \(S^7\).  These are called Milnor spheres. 
  Eels and Kuiper \cite{EK} defined a smooth structure invariant \(\mu(M)\in\f{Q}/\f{Z}\) of a closed spin 7-manifold \(M\) as follows.  Assume that the first Pontryagin class \(p_1(TM)=0\in H^4(M,\f{Q}).\)  The 7-dimensional spin cobordism group is trivial, so we can choose a spin manifold \(W\) such that \(\partial W=M.\)  In the long exact sequence 
\[\to H^4(W,M;\f{Q})\to H^4(W,\f{Q})\to H^4(M,\f{Q})\to\]
we have \(p_1(TW)\mapsto p_1(TM)=0\) and so there exists \(p\in H^4(W,M;\f{Q})\) such that \(p\mapsto p_1(TW).\)  Then 
\[\mu(M)=\ov{2^7\cdot 7}\left<p^2,[W,\partial W]\right>-\ov{2^5\cdot 7}\text{sign}(W)\text{ mod }\f{Z}.\]

One checks that \(\mu(M)\) does not depend on the choice of \(W\) nor of \(p\).  \(\mu\) is additive under connected sums, that is \(\mu(M\#N)=\mu(M)+\mu(N)\).  Using the work of Smale, Eels and Kuiper proved that two manifolds homeomorphic to \(S^7\) are orientation preserving diffeomorphic if and only if they have the same value of \(\mu.\)    They showed further that the possible values of \(\mu\) for such a manifold are given by
\[\{\mu(\Sigma^7)|\ \Sigma^7\text{ is homeomorphic to } S^7\}=\ov{28}\{0,1,...,27\}\subset \f{Q}/\f{Z}\]
(where the diffeomorphism types form a group under connected sum).  \(\mu(S^7)=0\) and the Milnor spheres (those which can be written as the total space of an \(S^3\) bundle over \(S^4\)), have \(\mu\) values
\[\ov{28}\{\pm1,\pm3,\pm4,\pm6,\pm7,\pm8,\pm10,\pm11,\pm13,14\}\in\f{Q}/\f{Z}.\]
The \(\mu\) values
\begin{equation}\label{nm}\ov{28}\{\pm2,\pm5,\pm9,\pm12\}\in\f{Q}/\f{Z}\end{equation}
correspond to exotic spheres which are not Milnor spheres.

Crowley completed the diffeomorphism classification of highly connected 7-manifolds.  Here we describe only the case where \(M^7\) is a 2-connected rational homology sphere, so \(\zco{M}{4}\) is finite.

  The \emph{linking form}  lk\(:\zco{M}{4}\times \zco{M}{4}\to\f{Q}/\Z\) is a bilinear form defined as follows.  Let \(\beta:H^3(M,\Q/\Z)\to H^4(M,\Z)\) be the Bockstein homomorphism corresponding to the coefficient sequence \(\f{Z}\to\Q\to\Q/\Z.\)  Let \(x,y\) be torsion classes in \(\zco{M}{4}\).  Since \(x\) has finite order, there exists \(\hat{x}\in H^3(M,\Q/\Z)\) such that \(\beta(\hat x)=x.\)  Then lk\((x,y)=\left<\hat x\cup y,[M]\right>\in \Q/\Z.\)  The isomorphism type of the linking form is a homotopy invariant.  
  
  We note that the total space \(M\) of an \(S^3\) bundle over \(S^4\) with nonzero Euler number \(n\) is a 2-connected rational cohomology sphere with \(\zco{M}{4}\cong\f{Z}_{|n|}.\)  The linking form of such a space is always \emph{standard}, meaning it is isomorphic to the bilinear form 
  \[\f{Z}_{|n|}\times \f{Z}_{|n|}\to\Q/\Z\]
  \[(x,y)\mapsto \frac{xy}{|n|}.\]
  The linking form can be used to determine if a 2-connected rational cohomology sphere, such as many of the manifolds \(M_\du{a}{b}\), has the homotopy type of an \(S^3\) bundle over \(S^4\). 
  
  Let \(\frac{p_1}{2}(TM)\) be the spin characteristic class (the pullback of the generator of \(\zco{B\Spin}{4}\)) which has the property that \(2\frac{p_1}{2}(TM)=p_1(TM)\).  In \cite{Cr} Crowley defines a function \(q_M:\zco{M}{4}\to\Q/\Z\) such that for all \(x,y\in\zco{M}{4}\)
\begin{equation}\label{qprops}q_M(x+y)=q_M(x)+q_M(y)+\mathrm{lk}(x,y)\end{equation}
\[q_M(x)-q_M(-x)=lk\left(x,\frac{p_1}{2}(TM)\right).\]

Crowley then proved that given two 2-connected rational cohomology spheres \(M\) and \(N\) there exists an isomorphism \(\theta:\zco{M}{4}\to\zco{N}{4}\) such that \(q_N\circ\theta=q_M\) if and only if  there exists a manifold \(\Sigma^7\) homoemorphic to \(S^7\) such that \(M\) is diffeomorphic to the connected sum \(N\#\Sigma\) .

If we know further that \(\mu(M)=\mu(N),\) we can conclude that 
\[\mu(M)=\mu(N\#\Sigma)=\mu(N)+\mu(\Sigma)\]
and thus \(\mu(\Sigma)=0,\) \(\Sigma\) is diffeomorphic to \(S^7,\) and \(M\) is diffeomorphic to \(N.\)  Thus \(q\) and \(\mu\) form a complete set of diffeomorphism invariants for 2-connected rational cohomology spheres.

Note that if \(\zco{M}{4}\) has odd order, \(lk\) and \(\frac{p_1}{2}\) uniquely determine \(q_M\)  by \eqref{qprops}.  Furthermore \(p_1(TM)\) uniquely determines \(\frac{p_1}{2}(TM).\)  Thus in that case the existence of a diffeomorphism can be determined using \(\mu\) and an  isomorphism preserving lk and \(p_1.\)    

\subsection{Kreck-Stolz Invariant}\label{ksinv} In \cite{KS}, Kreck and Stolz defined a refinement of \(\mu\) which depends on a choice of metric on \(M\).   Let \(g\) be a Riemannian metric of  positive scalar curvature on a closed spin 7-manifold \(M\) with vanishing rational first Pontryagin class.  Let \(D_g\) be the Dirac operator and \(B_g\) the operator on \(\Omega^\text{even}(M)\) defined in \sref{index}.  Let \(p_1(g)\) be the Chern-Weil Pontryagin form defined in terms of the curvature tensor of \(g\) and let \(\overline{p_1(g)}\) be any 3-form such that \(d\overline{p_1(g)}=p_1(g).\)  Such a 3-form exists by the assumption on the first Pontryagin class of \(M.\)  Then

\begin{equation}\label{sdef}
s(M,g)=-\ov{2}\eta(D_g)-\ov{2^5\cdot 7}\eta(B_g)+\ov{2^7\cdot 7}\int_Mp_1(g)\overline{p_1(g)}
\end{equation}
where  \(\eta\) is the spectral invariant described in \sref{index}.  Using the Atiyah-Patodi-Singer index theorem applied to a metric on \(W\) extending \(g\), Kreck and Stolz showed that
\begin{equation}\label{smu}\mu(M)=s(M,g)\text{ mod }\f{Z}.\end{equation}
They showed further that for a simply connected manifold \(M,\) \(|s(M,g)|\) depends only on the connected component of the image of \(g\) in \(\mods{scal}(M).\)  Later, using the fact due to Bohm and Wilking \cite{BW} that a nonnegatively curved metric on a simply connected manifold evolves immediately to a metric of positive Ricci curvature under the Ricci flow,   Belegradek, Kwasik and Schultz \cite{BKS}, Dessai, Klaus and Tuschmann \cite{DKT}, and Belegradek and Gonz\'{a}lez-\'{A}lvaro \cite{BeGo} proved the following: 

\begin{lem}\label{comps}
	Let \(g_1,g_2\) be Riemannian metrics with sec\((g_i)\geq0\) and scal\((g_i)>0\) on a spin manifold \(M^{4k-1}\) with vanishing rational Pontryagin classes.  If \(|s(M,g_1)|\neq|s(M,g_2)|\), then \(g_1,g_2\) represent distinct connected components of \(\modse{sec}(M)\).  
\end{lem}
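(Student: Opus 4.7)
\emph{Proof plan.} The strategy is to transport any path in $\modse{sec}(M)$ between $g_1$ and $g_2$ into a path in $\mods{scal}(M)$ by applying the Ricci flow for a short uniform time, and then invoke the Kreck--Stolz theorem that $|s|$ is locally constant on $\mods{scal}(M)$. First I would note that $|s(M,g)|$ descends to $\modse{sec}(M)$ wherever it is defined: diffeomorphisms preserving both orientation and spin structure leave $s$ invariant, while orientation reversal flips its sign, as read off from the definition \eqref{sdef} in terms of $\eta$-invariants and the oriented integral $\int_M p_1(g)\overline{p_1(g)}$.

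Suppose for contradiction that $g_1$ and $g_2$ represent the same component. After pulling back by a suitable diffeomorphism, there is a continuous family $\{g_t\}_{t\in[0,1]}$ of metrics on $M$ with $\text{sec}(g_t)\geq 0$ joining $g_0=g_1$ to $g_1=\phi^*g_2$. Intermediate $g_t$ may fail to have $\text{scal}(g_t)>0$, so $s$ is not directly defined along the path. Here I apply the B\"ohm--Wilking theorem \cite{BW} cited in the excerpt: on a closed simply connected manifold (which $M$ will be in all intended applications), any metric with $\text{sec}\geq 0$ evolves under the Ricci flow to one with $\text{Ric}>0$, hence $\text{scal}>0$, after any positive time. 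Standard continuous dependence of solutions to parabolic equations on initial data, together with the compactness of $[0,1]$, provides a uniform $\epsilon>0$ and a jointly continuous family $g_t^s$ with $g_t^0=g_t$ and $\text{Ric}(g_t^s)>0$ whenever $s>0$.

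The endpoints $g_0,g_1$ already satisfy $\text{scal}>0$, and this condition is preserved along the Ricci flow by the maximum principle applied to $\partial_sR=\Delta R+2|\text{Ric}|^2$. Concatenating the three sub-paths
\[
g_0 \;\rightsquigarrow\; g_0^{\epsilon} \;\xrightarrow{\text{vary } t}\; g_1^{\epsilon} \;\rightsquigarrow\; g_1
\]
(the first traversing $s\in[0,\epsilon]$ forwards, the last traversing $s\in[0,\epsilon]$ backwards through the already-constructed path $s\mapsto g_1^s$) yields a continuous path of metrics with $\text{scal}>0$ connecting $g_1$ to $\phi^*g_2$. Descending to moduli, $[g_1]$ and $[g_2]$ lie in the same component of $\mods{scal}(M)$, so the Kreck--Stolz continuity result from \cite{KS} forces $|s(M,g_1)|=|s(M,g_2)|$, contradicting the hypothesis.

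The main technical obstacle is the joint continuity of $g_t^s$ and the uniformity of $\epsilon$ over the compact parameter interval; this is precisely what the arguments of \cite{BKS}, \cite{DKT}, and \cite{BeGo} supply, using parabolic stability theory and a standard compactness-and-patching argument on $[0,1]$. Everything else in the proof reduces to routine manipulation of the $\eta$- and index-theoretic definitions in \sref{ksinv} and invocation of the scalar curvature maximum principle.
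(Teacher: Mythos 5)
The paper does not prove this lemma itself; it cites it to Belegradek--Kwasik--Schultz \cite{BKS}, Dessai--Klaus--Tuschmann \cite{DKT}, and Belegradek--Gonz\'alez-\'Alvaro \cite{BeGo}, attributing the key idea to the B\"ohm--Wilking Ricci flow theorem exactly as you describe. Your reconstruction --- flow the $\sec\geq 0$ path for a uniform short time to land in positive Ricci, use the maximum principle to keep the endpoints' positive scalar curvature, and then invoke the Kreck--Stolz local constancy of $|s|$ on $\mods{scal}(M)$ --- matches the cited approach, and you correctly flag both the implicit simple-connectedness hypothesis and the joint-continuity/uniform-$\epsilon$ technicality as the points delegated to those references.
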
 
\noindent By \cite[Prop. 2.13.i]{KS}, the quantity \(|s|\)  is  invariant under pullbacks by diffeomorphisms of simply connected manifolds.  That is, for simply connected spin manifolds \(M,N\), if \(g\) is a metric of positive scalar curvature on \(M\) and \(\phi:M\to N\) is a diffeomorphism, then \(|s(M,\phi^*g)|=|s(N,g)|\).    To prove \(\modse{sec}(M)\) has infinitely many components it suffices to find an infinite set of manifolds diffeomorphic to \(M\), each with a metric of nonnegative curvature, such that the corresponding set of values of \(s\) is infinite.  %We will compute \(s\) by describing each relevant manifold as the boundary of an orbifold.  

\subsection{Cohomogeneity one actions}\label{coho1prelim}

Let \(G\) be a compact Lie group with closed subgroups \(H,K_+,K_-\) such that \(H\subset K_{\pm}\) and \(K_\pm/H\) is diffeomorphic to a sphere \(S^{d_\pm-1}.\)  Then the action of \(K_\pm\) on \(K_\pm/H\) extends linearly to an action on the disc \(D^{d_\pm}\) and we construct two disc bundles
\[P_\pm=G\times_{K_\pm}D^{d_{\pm}}.\]   
\(D_\pm\) has a natural left action by \(G\) and \(G\backslash D_\pm\) is diffeomorphic to an interval.  Note that 
\[\partial D_\pm\cong G\times_{K_\pm}(K_\pm/H)\cong G/H\]
and all the diffeomorphisms are equivariant.  Thus we can define a closed manifold 
\[P=P_-\cup_{G/H}P_+\]
with a left action by \(G.\) The quotient \(G\backslash P\) is diffeomorphic to \([-1,1]\) and the action of \(G\) on \(P\) is said to be of {\emph {cohomogeneity one}}.  Let \(\pi:P\to[-1,1]\) be the quotient map; then \(\pi^{-1}(\pm1)\cong G/K_\pm\) is the zero section of \(D_\pm\).    Those orbits have isotropy type \(K_\pm\) and the orbits in \(\pi^{-1}((-1,1))\) have isotropy type \(H.\)  Indeed, any closed manifold admitting an action by a compact Lie group with quotient \([-1,1]\) is of this form.   Grove and Ziller proved the following:

\begin{thm}\cite{GZ}\label{GZ}
	If \(d_+=d_-=2,\) then \(P\) admits a \(G\)-invariant metric with nonnegative sectional curvature.   
\end{thm}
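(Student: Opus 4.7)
The plan is to construct nonnegatively curved $G$-invariant metrics on each disc bundle $P_\pm$ that are Riemannian products in a collar of the common boundary $G/H$, arrange the induced boundary metrics to agree, and glue the two pieces.

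First I realize each $P_\pm$ as a Riemannian submersion quotient. Because $d_\pm=2$, the action of $K_\pm$ on $D^{d_\pm}=D^2$ factors through a homomorphism $\rho_\pm\colon K_\pm\to\SO(2)$ with kernel $H$. Equip $G$ with a bi-invariant metric $Q$, which has nonnegative sectional curvature, and equip $D^2$ with a rotationally symmetric metric of the form $dr^2+f_\pm(r)^2\,d\theta^2$, where $f_\pm$ is a smooth nonnegative concave function on $[0,1]$ with $f_\pm(0)=0$ and $f_\pm'(0)=1$, and equal to a positive constant $L_\pm/(2\pi)$ on a subinterval $[r_\pm,1]$. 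Concavity gives nonnegative Gauss curvature, and the constant-$f_\pm$ region produces a flat cylindrical collar $[r_\pm,1]\times S^1_{L_\pm}$ isometric to a Riemannian product. The product metric on $G\times D^2$ is then nonnegatively curved, and the diagonal $K_\pm$-action $k\cdot(g,v)=(gk^{-1},\rho_\pm(k)v)$ is by isometries with quotient manifold $P_\pm$. By O'Neill's formula, the induced $G$-invariant quotient metric $\bar g_\pm$ on $P_\pm$ has nonnegative sectional curvature.

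Next I observe that $\bar g_\pm$ is a Riemannian product in a collar of $\partial P_\pm=G/H$: on the cylindrical region $r\in[r_\pm,1]$, the $K_\pm$-action preserves the $dr$-factor, so the quotient metric is a Riemannian product of a $G$-invariant metric $h_\pm$ on $G/H$ with the interval in $r$. The main obstacle is to match $h_+$ and $h_-$ on $G/H$ so that the two collars can be identified isometrically. A direct computation at the identity coset shows that $h_\pm$ agrees with the $Q$-induced normal metric on the $Q$-orthogonal complement of $\fk_\pm$ in $\fg$, and rescales the one-dimensional subspace $\fk_\pm\cap\fh^\perp$ by a factor of the form $c_\pm^2 L_\pm^2/(1+c_\pm^2 L_\pm^2)$ depending on $L_\pm$ and on the slope of $\rho_\pm$. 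Since the two lines $\fk_+\cap\fh^\perp$ and $\fk_-\cap\fh^\perp$ in $\fg/\fh$ are generically distinct, matching $h_+=h_-$ requires extra freedom in the base metric. I would therefore replace $Q$ by a Cheeger deformation along a torus in $G$ containing both of these one-dimensional subspaces; Cheeger deformations preserve nonnegative sectional curvature and allow the submersion metrics on the two lines to be independently rescaled. Tuning these parameters together with $L_+$ and $L_-$ yields $h_+=h_-$.

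Once the collars are isometric, the two pieces glue to a smooth $G$-invariant metric on $P=P_-\cup_{G/H}P_+$. Smoothness holds because each collar is an isometric Riemannian product, and nonnegative sectional curvature is preserved across the gluing hypersurface since its second fundamental form vanishes identically on both sides. The hardest step is the boundary-matching, which is tractable precisely because the codimension-two hypothesis $d_\pm=2$ reduces the dependence of $h_\pm$ on the disc geometry to a single scalar parameter on each side; combined with the Cheeger deformation this supplies just enough flexibility to align the two induced $G$-invariant metrics on $G/H$.
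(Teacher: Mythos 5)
Your overall skeleton correctly reconstructs the Grove--Ziller strategy: realize each $P_\pm=G\times_{K_\pm}D^2$ as the base of a Riemannian submersion from $G\times D^2$ with a nonnegatively curved, rotationally symmetric disc metric that is a flat cylinder near $\partial D^2$, invoke O'Neill, observe that the quotient metric is a Riemannian product in a collar of $G/H$, and glue. You have also correctly computed that the induced boundary metric $h_\pm$ on $G/H$ equals $Q$ on $\fk_\pm^\perp$ but shrinks the line $\fm_\pm=\fk_\pm\cap\fh^\perp$ by a factor strictly less than $1$; identifying this mismatch as the crux is exactly right.

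The gap is in the matching step. Your remedy---replace $Q$ by a Cheeger deformation along a torus containing both $\fm_+$ and $\fm_-$---does not work, for two independent reasons. First, no such torus need exist: in the application in this paper $\fm_-$ and $\fm_+$ are generated by $(a_1i,a_2i,a_3i)$ and $(b_1j,b_2j,b_3j)$ respectively in $\fg=\fsu(2)^3$, and these do not commute, so they are not contained in any abelian subalgebra. Second, and more fundamentally, a Cheeger deformation of a \emph{common} base metric on $G$ can only shrink; this cannot reconcile $h_+$ and $h_-$. Suppose $\fm_-\subset\fk_+^\perp$. Then for any common base metric $Q'$ on $G$, the boundary metric from the $P_+$ side restricts on $\fm_-$ to $Q'|_{\fm_-}$ (it only shrinks $\fm_+$), while the one from the $P_-$ side restricts to $\lambda_-Q'|_{\fm_-}$ with $\lambda_-<1$ strictly. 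No choice of $Q'$, Cheeger-deformed or otherwise, makes $Q'|_{\fm_-}=\lambda_-Q'|_{\fm_-}$. Using \emph{different} base metrics $Q_\pm'$ on the two halves also fails, because $\Ad(K_\pm)$ does not preserve the line $\fm_\mp$ inside $\fk_\pm^\perp$, so a metric shrunk only along $\fm_\mp$ is not $\Ad(K_\pm)$-invariant and the submersion quotient would not be $G$-invariant.

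What Grove and Ziller actually prove is stronger and makes the gluing automatic: each disc bundle $G\times_{K_\pm}D^2$ admits a $G$-invariant nonnegatively curved metric that is a product near the boundary with boundary metric equal to the \emph{normal homogeneous} metric $Q|_{\fh^\perp}$---the same metric from both sides. Achieving $Q|_{\fh^\perp}$ on the boundary requires \emph{expanding} the $\fk_\pm$-direction of the base metric near $\partial D^2$ (the opposite of a Cheeger deformation), in order to cancel the shrinking you computed. Concretely, on $G\times D^2$ they take a metric of the shape $Q|_{\fk_\pm^\perp}+\nu(r)^2Q|_{\fk_\pm}+dr^2+f(r)^2d\theta^2$ with $\nu$ increasing from $1$ to a value slightly larger than $1$ while $f$ increases from $0$ to a large constant, and establish by a direct curvature estimate that this has $\sec\ge 0$. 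The hypothesis $d_\pm=2$, i.e.\ $\dim\fk_\pm=1$, is precisely what makes this estimate go through; the construction genuinely fails when $K_\pm$ has higher dimension, which is consistent with the known failure of the theorem in codimension $\ge 3$.
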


\begin{section}{Kreck-Stolz Invariant of the boundary of an Orbifold}\label{ks}
In this section we derive formulae to compute the Kreck-Stolz \(s\) invariant  for a manifold \(M^7\) which is given as the boundary of a spin orbifold, and then specifically as the boundary of the total space of an orbifold disc bundle.   We present the 7-dimensional case for simplicity, although a similar derivation works for any dimension of the form \(4k-1.\)

   When \(M\) is the boundary of a spin manifold \(W\) and \(h\) is a product-like metric on \(W\) with \(h|_{M}=g\), Kreck and Stolz used the Atiyah-Patodi-Singer index theorem for manifolds with boundary to express \(s(M,g)\) in terms of the index of the Dirac operator  and topological invariants of \(W\).  The following lemma generalizes Proposition 2.13 (iii) in \cite{KS} to the case where \(W\) is an orbifold.

\begin{lem}\label{sbound}

	Let \(M^7\) be a closed manifold with vanishing first real Pontryagin class.  Let \(W^8\) be a spin orbifold with \(\partial W=M.\) Let \(h\) be a product like metric on \(W\) and let \(g=h|_{M}.\)  If scal\((g)>0\) then

	\begin{equation}\label{sboundeq}
	s(M,g)=\text{\normalfont ind}(D_h)+\ov{2^5\cdot 7}\text{\normalfont sign}(W)-\ov{2^7\cdot 7}\int_Wp\wedge q-\int_{\Lambda_sW}\left(\hat{A}_\Lambda(h)+\ov{2^5\cdot 7}L_\Lambda(h)\right)	\end{equation}	for any \(p,q\in p_1(TW)\) such that \(q|_M=0.\)

 	\end{lem}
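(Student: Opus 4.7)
The plan is to emulate the Kreck--Stolz derivation of Proposition 2.13(iii) in \cite{KS}, substituting the orbifold APS theorem \tref{orbiindex} for its manifold version and adding a single Stokes computation on \(W\) to convert the boundary integral \(\int_M p_1(g)\overline{p_1(g)}\) into bulk integrals. Starting from \eqref{sdef}, I would rewrite the two spectral terms using \tref{orbiindex}: the Lichnerowicz argument of \sref{dirac} gives \(\ker(D_g)=0\) from \(\text{scal}(g)>0\), so \eqref{ind} yields \(\eta(D_g)=2\int_{\Lambda W}\hat{A}_\Lambda(h)-2\,\text{ind}(D_h)\), while \eqref{sing} yields \(\eta(B_g)=\int_{\Lambda W}L_\Lambda(h)-\text{sign}(W)\). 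Splitting \(\Lambda W = W \sqcup \Lambda_s W\) and using \(\hat{A}_\Lambda(h)|_W=\hat{A}(h)\), \(L_\Lambda(h)|_W=L(h)\) makes the \(\Lambda_s W\) pieces appear already in the exact form claimed in \eqref{sboundeq}; what remains is to handle the bulk integrals of \(\hat{A}(h)\) and \(L(h)\) over \(W\) together with the boundary term coming from \eqref{sdef}.

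For that boundary term I would take \(p=p_1(h)\) as a Chern--Weil representative, so the product-like hypothesis yields \(p|_M=p_1(g)\). Since \([p]=[q]\) in \(H^4(W,\R)\), choose \(\beta\in\Omega^3(W)\) with \(p-q = d\beta\); restricting to \(M\) gives \(d(\beta|_M)=p_1(g)\), so \(\beta|_M\) is a valid primitive \(\overline{p_1(g)}\) in \eqref{sdef} (any two such primitives differ by a closed form, which leaves the boundary integral unchanged by Stokes on the closed manifold \(M\)). Because \(dp=0\) and \(p\) has even degree, \(d(p\wedge\beta)=p\wedge d\beta\), and Stokes on \(W\) gives
\[
\int_M p_1(g)\,\overline{p_1(g)} \;=\; \int_M (p\wedge\beta)|_M \;=\; \int_W p\wedge d\beta \;=\; \int_W p_1(h)^2 \;-\; \int_W p\wedge q.
\]
A parallel Stokes argument, using \(q|_M=0\), shows that \(\int_W p\wedge q\) is independent of the particular choices of \(p\) and \(q\), so the resulting formula indeed holds for any such pair.

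Finally, I would assemble the pieces using the top-degree Chern--Weil identity
\[
\hat{A}(h) + \frac{1}{2^{5}\cdot 7}\,L(h) \;\equiv\; \frac{1}{2^{7}\cdot 7}\,p_1(h)^2
\]
on an eight-dimensional orbifold, which follows from \(\hat{A}_2=(7p_1^2-4p_2)/5760\) and \(L_2=(7p_2-p_1^2)/45\) by a direct coefficient check: the \(p_2\) contributions cancel, and the surviving \(p_1^2\) coefficient simplifies to \(1/(2^7\cdot 7)\). Substituting everything into \eqref{sdef} causes the three bulk \(W\)-integrals \(\int_W\hat{A}(h)\), \((2^5\cdot 7)^{-1}\int_W L(h)\) and \((2^7\cdot 7)^{-1}\int_W p_1(h)^2\) to telescope to zero, and the remaining terms rearrange to exactly \eqref{sboundeq}.

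The main obstacle is the sign and orientation bookkeeping in the Stokes step: one must check that the induced orientation on \(M=\partial W\) is compatible with the conventions used in \tref{orbiindex} and in the definition \eqref{sdef}, so that \(\beta|_M\) matches \(\overline{p_1(g)}\) with the correct sign. Modulo that careful accounting, the argument is a clean orbifold adaptation of Kreck--Stolz, with the singular-strata contributions surfacing naturally from the \(\Lambda_s W\) part of the inertia orbifold.
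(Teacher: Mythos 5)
Your proposal is correct and takes essentially the same approach as the paper: both substitute the orbifold APS theorems \tref{orbiindex} for the spectral terms, split the inertia orbifold as \(W\sqcup\Lambda_sW\), use the degree-8 Chern--Weil identity to combine the smooth bulk integrals into \(\frac{1}{2^7\cdot 7}\int_Wp_1(h)^2\), and apply the Stokes identity (the paper cites \cite{KS} Lemma 2.7 for this; your derivation with \(p=p_1(h)\), \(p-q=d\beta\), and \(\beta|_M=\overline{p_1(g)}\) is precisely that argument). The only organizational difference is that the paper first assembles the linear combination of the two index theorems and recognizes \(s\) at the end, whereas you start from \eqref{sdef} and substitute forward; this is purely cosmetic.
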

 
 \begin{proof}
 	Taking a linear combination of \eqref{sing} and \eqref{ind} and evaluating the \(\hat{A}\) and \(L\) classes in degree 8 we have 
 	\[\text{ind}(D_h)+\ov{2^5\cdot 7}\text{sign}(W)=\int_{\Lambda W}\left(\hat{A}_\Lambda+\ov{2^5\cdot 7}L_\Lambda\right)-\ov{2}\eta(D_g)-\ov{2^5\cdot 7}\eta(B_g)\]
 	\begin{equation}\label{indexp}=\ov{2^7\cdot 7}\int_Wp_1(h)\wedge p_1(h)+\int_{\Lambda_sW}\left(\hat{A}_\Lambda+\ov{2^5\cdot 7}L_\Lambda\right)-\ov{2}\eta(D_g)-\ov{2^5\cdot 7}\eta(B_g).\end{equation}
 	Here we use that scal\((g)>0\) so \(\text{ker}(D_g)=\{0\}\).

Since \(h\) is product-like near \(M\) and \(p_1(g)\) is exact by the assumption on the first Pontryagin class of \(M\), \(p_1(h)|_{M}=p_1(g)=d\overline{p_1(g)}\) for some form \(\overline{p_1(g)}\) on \(M.\) The following equation, which is a version of \cite{KS} Lemma 2.7, follows from Stokes' theorem:

	\[\int_Wp_1(h)\wedge p_1(h)=\int_Wp\wedge q+\int_{\partial W}p_1(g)\wedge \overline{ p_1(g)}.\]

 \noindent Using the definition \eqref{sdef} we have
\[\text{ind}(D_h)+\ov{2^5\cdot 7}\text{sign}(W)=\ov{2^7\cdot 7}\int_Wp\wedge q+\int_{\Lambda_sW}\left(\hat{A}_\Lambda+\ov{2^5\cdot 7}L_\Lambda\right)+s(M,g)\]
 completing the proof of the \lref{sbound}.  
 \end{proof}
	
	Recall from \sref{dirac} that if scal\((h)>0\), as will be the case for the examples in \sref{mfds}, we can conclude that ind\((D_{h})=0\).  The manifolds used to prove \tref{thm} are total spaces of orbifold 3-sphere bundles and thus the boundaries of total spaces of orbifold 4-disc bundles.  We reformulate two of the terms in \eqref{sboundeq} in that case to facilitate computation.
	
	\begin{lem}\label{sbase}
%Let \(B^4\) be a closed 4 dimensional orbifold and let \(E\to B\) be a orbifold vector bundle of rank \(3\) with non-vanishing Euler number.  Let \(W\subset E\) be the disc bundle and let \(p,q\in\Omega^4(W)\) be 4-forms representing \(p_1(TW)\) such that \(q|_{\partial W}=0\).  Then 

 Let \(B^4\) be a closed orbifold and \(E^8\to B^4\) an oriented rank 4 orbifold vector bundle with non-vanishing Euler number.  Let \(W^8\subset E^8\) be the corresponding 4-disc bundle.  Then 
   
	\[\text{\normalfont sign}(W)=\text{\normalfont sign}\left(\int_Be(E)\right)\]
	where the right side refers to the sign of a nonzero real number.  Furthermore, there exist \(p,q\in p_1(TW)\) such that \(q|_{\partial W}=0\) and 
	\begin{equation}\label{sbaseeq}
\int_Wp\wedge q=\left(\int_Bp_1(TB)+\int_Bp_1(E)\right)^2\left(\int_Be(E)\right)^{-1}.\end{equation}

	\

\begin{proof}

  Let \(\pi:W\to B\) be the orbifold bundle projection.   Then 
	\[TW=\pi^*(TB\oplus E).\]
	Choosing any connections \(\nabla_{TB}\) on \(TB\) and \(\nabla_E\) on \(E\) we  set
	\[p=\pi^*p_1(\nabla_{TB})+\pi^*p_1(\nabla_E).\]
	Let \(\Phi\in\Omega^4(W)\) be a Thom form for \(\pi:W\to B,\) so \(\Phi|_{\partial W}=0\) and \([\Phi]=\pi^*e(E).\)  Define   
	\[p_B=\frac{\int_Bp_1(TB)}{\int_Be(E)}\]
	\[p_E=\frac{\int_Bp_1(E)}{\int_Be(E)}\]
	so \(p_Be(E)= p_1(TB)\) and \(p_Ee(E)= p_1(E).\) Then choose	\(q=(p_B+p_E)\Phi.\)   Integrating over the fibers (see \eqref{iof}) we have 
	\[\int_Wp\wedge q=(p_B+p_E)\int_W\pi^*(p_1(\nabla_{TB})+p_1(\nabla_{E}))\wedge\Phi=(p_B+p_E)\int_B(p_1(TB)+p_1(E))\]
\[	=\left(\int_Bp_1(TB)+\int_Bp_1(E)\right)^2\left(\int_Be(E)\right)^{-1}
	.\]

	It remains to compute sign\((W).\)  By the Thom isomorphism  \(H^4(W,\partial W; \ar{})\) is isomorphic to \(\ar{}\), generated by \([\Phi].\)  
Thus the intersection form 
%	\[H^4(W,\partial W;\ar{})\times H^4(W,\partial W;\ar{})\to \ar{}\]
%	\[([\alpha],[\beta])\to \int_W\alpha\wedge\beta\]
	is determined by
	\[([\Phi],[\Phi])\mapsto \int_W\Phi\wedge\Phi=\int_W\pi^*e(E)\wedge\Phi=\int_Be(E).\]
The signature of this bilinear form is the sign of \(\int_Be(E).\)

\end{proof}

		\end{lem}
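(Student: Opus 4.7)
The plan is to exploit the fact that $W$ is the total space of a disc bundle in $E$, so $W$ deformation retracts onto its zero section $B$, and the relative cohomology $H^*(W,\partial W;\R)$ is controlled by the orbifold Thom isomorphism. Along the zero section the tangent bundle splits as $TW \cong TB \oplus E$, and this identification extends to all of $W$ as $TW = \pi^*(TB \oplus E)$, where $\pi : W \to B$ is the bundle projection. By the Whitney sum formula for Pontryagin forms, a natural representative of $p_1(TW)$ is $p := \pi^* p_1(\nabla_{TB}) + \pi^* p_1(\nabla_E)$ for any choice of connections, which I would take as my first representative.

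For the companion form $q$, I want a 4-form cohomologous to $p$ that vanishes on $\partial W$. Since $W$ retracts to $B$, the map $\pi^* : H^4(B,\R) \to H^4(W,\R)$ is an isomorphism, and orbifold Poincar\'e duality on the closed oriented 4-orbifold $B$ gives $H^4(B,\R) \cong \R$ via integration. Thus $[p] \in H^4(W,\R)$ must be a real multiple of $\pi^* e(E) = [\Phi]$, where $\Phi$ is a Thom form vanishing near $\partial W$. Pairing both classes against integration over $B$ via the fibration determines the scalar as $c = \bigl(\int_B p_1(TB) + \int_B p_1(E)\bigr)/\int_B e(E)$, which is well-defined because $\int_B e(E) \neq 0$ by hypothesis. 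Setting $q := c\,\Phi$ then gives a representative of $p_1(TW)$ vanishing on $\partial W$, and applying the fiber-integration identity $\int_W \pi^*\alpha \wedge \Phi = \int_B \alpha$ twice (once to identify the scalar, once to evaluate $\int_W p \wedge q$) produces the stated product formula.

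For the signature, the Thom isomorphism shows $H^4(W,\partial W;\R)$ is one-dimensional, generated by $[\Phi]$, so the intersection form reduces to a single entry. That entry is $\int_W \Phi \wedge \Phi = \int_W \pi^* e(E) \wedge \Phi = \int_B e(E)$ by fiber integration, and the signature of a nonzero $1\times 1$ symmetric form is just the sign of the entry, yielding $\sign(W) = \sign\bigl(\int_B e(E)\bigr)$. There is no deep obstacle here; the main point to verify, rather than to overcome, is that the orbifold analogues of the Thom isomorphism, fiber integration, and Poincar\'e duality in real cohomology behave exactly as in the manifold case. These are recalled in Section~\ref{pre} and comprise the only machinery required.
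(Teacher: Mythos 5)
Your proposal is correct and takes essentially the same approach as the paper: both choose $p = \pi^* p_1(\nabla_{TB}) + \pi^* p_1(\nabla_E)$, identify the scalar $c = \bigl(\int_B p_1(TB) + \int_B p_1(E)\bigr)/\int_B e(E)$ via $H^4(B,\R)\cong\R$, set $q = c\Phi$, and evaluate by fiber integration, then compute the signature from the one-dimensional $H^4(W,\partial W;\R)$ generated by $[\Phi]$. Your explicit appeal to Poincar\'e duality to justify $[p] = c[\Phi]$ (and hence $q \in p_1(TW)$) is merely a more verbose rendering of the step the paper compresses into the identities $p_B e(E) = p_1(TB)$, $p_E e(E) = p_1(E)$.
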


	\end{section}

\section{Inertia Orbifolds and Index Theory of quotients}\label{quotientinertia}
This section and the following are dedicated to developing tools for computing \(\int_{\Lambda_sX}L_\Lambda\) and \(\int_{\Lambda_sX}\hat{A}_\Lambda\) for an orbifold \(X\). In this section, using a description of \(X\) as the quotient of a manifold \(M\) by the almost free action of  a Lie group \(G\) we give a global description of each component of the integrals.  A related description of the inertia orbifold is given in \cite[Theorem 3.14]{ALR}.

Let a compact connected Lie group \(G\) act almost freely on a compact oriented manifold \(M.\)  Give \(G\) a biinvariant metric, \(M\) a \(G\)-invariant metric, and \(G\times M\) the product metric.  Let \(G\) act isometrically on the left of \(G\times M\) by \(g(h,p)=(ghg^{-1},gp)\) for \(g,h\in G\) and \(p\in M\).  Define a \(G\)-equivariant diffeomorphism \(T:G\times M\to G\times M,\) \(T(g,p)=(g,gp)\) for \(g\in G\) and \(p\in M\).

\begin{lem}\label{lgm1}
	Let \(\bar I=(G\times M)^T\) be the fixed point set of \(T.\)  Then \(\bar I\) is an embedded \(G-\)equivariant submanifold of \(G\times M\) and 
\(G\bs \bar{I}\) is diffeomorphic to  \(\Lambda(G\bs M)\).  
\end{lem}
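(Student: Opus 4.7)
\medskip

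\noindent\textbf{Proof proposal.} First observe that $T(h,p)=(h,p)$ iff $hp=p$, so $\bar I = \{(h,p)\in G\times M : h\in G_p\}$, and $G$-invariance follows from the direct computation $(ghg^{-1})(gp)=g(hp)=gp$. To see that $\bar I$ is an embedded submanifold, the plan is to work in a $G$-tubular neighborhood of an orbit $Gp$ using the slice theorem. For $x$ in a slice $U_p$, the stabilizer $G_x$ is a subgroup of $G_p$, so the fiber of $\bar I$ over $G\times U_p$ is the finite disjoint union
\[
\bar I \cap (G\times U_p)\;=\;\bigsqcup_{g\in G_p}\{g\}\times U_p^{g},
\]
where $U_p^{g}=\{x\in U_p:gx=x\}$ is the fixed point set of the orthogonal action of $g\in\Gamma_p\subset\O(n)$, hence a linear subspace of $U_p$. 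Each piece is a smooth submanifold of $G\times U_p$, and spreading around the orbit via $G\times_{G_p}U_p\cong G\cdot U_p$ gives the submanifold structure globally.

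Next, I define the candidate diffeomorphism
\[
\Phi:\; G\backslash\bar I\;\longrightarrow\;\Lambda(G\backslash M),\qquad [\,(g,p)\,]\;\longmapsto\;\bigl([p],[g]_{G_p}\bigr),
\]
where $[g]_{G_p}$ is the conjugacy class of $g$ inside $G_p=\Gamma_{[p]}$. To check well-definedness on $G$-orbits, note that replacing $(g,p)$ by $g'(g,p)=(g'gg'^{-1},g'p)$ changes $p$ to $g'p$, whose chart centered at $p$ uses the identification $\Gamma_{[g'p]}\cong\{\gamma\in G_p:\gamma x=x\}$ induced by a slice embedding, and under this identification the conjugate $g'gg'^{-1}$ corresponds to $g$; this is precisely the compatibility provided in \sref{orbifolds} for the injective homomorphism $\theta(\gamma)=g\gamma g^{-1}$.

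For bijectivity, surjectivity is immediate since $([p],[g])\in\Lambda(G\backslash M)$ with $g\in G_p$ is the image of $[(g,p)]$. For injectivity, suppose $(g_0,p)$ and $(g_0',p')$ have the same image. Then $p'=g'p$ for some $g'\in G$, and $[g_0']_{G_{p'}}=[g_0]_{G_p}$ modulo the conjugation identification $G_{p'}=g'G_pg'^{-1}$, so there is $h\in G_p$ with $g_0'=(g'h)g_0(g'h)^{-1}$, and since $h$ fixes $p$ we also have $(g'h)p=p'$; hence $(g_0',p')=(g'h)\cdot(g_0,p)$.

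Finally, to upgrade $\Phi$ to a diffeomorphism of orbifolds, I match local charts. Fix $(g,p)\in\bar I$. The $G$-orbit of the local sheet $\{g\}\times U_p^{g}$ in $\bar I$ is preserved by exactly those $g'\in G$ with $g'gg'^{-1}=g$ and $g'U_p^g\cap U_p^g\neq\varnothing$; by the slice theorem the second condition forces $g'\in G_p$, so the stabilizer of the sheet is $Z_{G_p}(g)$, and the induced map
\[
Z_{G_p}(g)\backslash U_p^{g}\;\longrightarrow\;G\backslash\bar I
\]
agrees under $\Phi$ with the defining orbifold chart $Z_{G_p}(g)\backslash U_p^{g}\to\Lambda(G\backslash M)$ from \sref{orbifolds}. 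The main obstacle in the argument, and the step that deserves the most care, is exactly this chart identification: verifying that the residual $G$-action on the sheet in $\bar I$ produces the centralizer $Z_{G_p}(g)$ (rather than something larger or smaller) and that the embeddings $\psi$ governing the compatibility of inertia charts match the slice-theoretic transition maps $\psi:U_q\to U_p$ recalled in \sref{orbifolds}. Once this is checked, $\Phi$ is a local diffeomorphism of orbifold charts, and combined with the bijection of underlying sets it is the desired equivariant diffeomorphism.
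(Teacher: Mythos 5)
Your proposal is correct and follows essentially the same route as the paper: identify $\bar I$ with pairs $(g,p)$, $g\in G_p$; use the slice theorem to exhibit $\bar I$ locally as $\bigsqcup_{g\in G_p}\{g\}\times U_p^g$ and spread it around the orbit; then match the $Z_{G_p}(g)$-slice charts on $\bar I$ with the defining charts $Z_{\Gamma_p}(g)\backslash U_p^g$ of $\Lambda(G\backslash M)$, with compatibility governed by the conjugation identity $f(\gamma x)\gamma f(x)^{-1}=g\gamma g^{-1}$ from \sref{orbifolds}. The only place you are noticeably thinner than the paper is the ``spreading around the orbit'' step: the paper makes this precise by working in the open set $Y\cong G\times(G\times_{G_p}U_p)$ and exhibiting an explicit $G$-equivariant embedding $\coprod_{(g)\subset G_p}G\times_{Z_{G_p}(g)}(\{g\}\times U_p^g)\to Y$ whose image is $\bar I\cap Y$, which simultaneously proves embeddedness and identifies $\{g\}\times U_p^g$ as a slice neighborhood with isotropy $Z_{G_p}(g)$; writing that map down would close the gap in your sketch.
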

\begin{proof} Let \((g,p)\in\bar I.\)  Let \(V_p,H_p\subset T_pM\) be the vertical and horizontal subspaces of the \(G\) action on \(M\).  Using the exponential map on \(M,\) we can choose a slice neighborhood \(U_p\subset M\) centered at \(p\) such that \(T_pU_p=H_p.\)

	Then \(G\times (G\times_{G_p}U_p)\) can be equivariantly identified with  an open set  \(Y\subset G\times M\). 
	\(\bar I \cap Y\) is then identified with \[\{(hg h^{-1},[h,x])|h\in G,g\in G_p,x\in U_p^g\}. \]
	
	For each \(g\in G_p,\) let \((g)\) denote the \(G_p-\) conjugacy class of \(g.\)  Choosing a representative of each such conjugacy class we define 
	\begin{equation}\label{embedding}\coprod_{(g)\subset G_p}G\times_{Z_{G_p}(g)}(\{g\}\times U_p^\gamma)\to  Y\end{equation}
	\[[h,(g,x)]\mapsto (hg h^{-1},[h,x])\mapsto (hgh^{-1},hx).\]
	
	One checks that the map is a \(G-\)equivariant injective immersion with image \(\bar I\cap Y,\) and thus \(\bar I\) is an embedded submanifold.

	Let \(X=G\bs M\).  We define a diffeomorphism \(\phi:\Lambda X\to I\) locally on orbifold charts for \(\Lambda X\).  %We then relate the local definition of the \(L_\Lambda\) and \(\hat{A}_\Lambda\) forms in those charts to the global form given in the lemma.  
	Recall that \(U_p^g\) is an orbifold chart for \(\Lambda X\) with isotropy group \(Z_{G_p}(g)\).  The embedding \eqref{embedding} implies that that \(\{g\}\times U_p^g\subset \bar I\) is a slice neighborhood with respect to the \(G\) action on \(\bar I,\) and thus an orbifold chart for \(I.\) % Indeed, the stabilizer of \((g,p),\) \(Z_{G_p}(g),\) acts on \(\{g\}\times U_p^g\) and the map 
	%	\[G\times_{Z_{G_p}(g)}\left(\{g\}\times U_p^g\right)\to\bar I_i\]
	%	\[[h,(g,x)]\mapsto[hgh^{-1},hx]\]
	%	can be checked to be an equivariant injective immersion.  By \eqref{projcomp} for instance we see that the dimension of \(G\times_{Z_{G_p}(g)}\left(\{g\}\times U_p^g\right)\) and \(\bar I_i\) are the same, and thus the map is a diffeomorphism onto its image, which is a tubular neighborhood of the orbit of \((g,p).\)  
	
	Thus \(\tilde\phi:U_p^g\to\{g\}\times U_p^g, \tilde\phi(x)=(g,x),\) is a \(Z_{G_p}(g)-\)equivariant diffeomorphism between the two orbifold charts, and \(\pi\circ \tilde\phi:U_p^g\to I\) locally defines a diffeomorphism \(\phi: \Lambda X\to I.\)  
	
	We must check that the local definitions are compatible and give an injective map \(\phi\).  This is equivalent to the statement that \(x\in U_p^g\) and \(y\in U_q^h\) represent the same point of \(\Lambda X\)  if and only if \([g,x]=[h,y]\in I.\)  
	
	\(x,y\) represent the same point in \(\Lambda X\) only if they are related by one of the open embeddings of orbifold charts used to define the orbifold structure; see \sref{orbifolds}.  In one direction, we may assume therefore that there is an open embedding \(\psi:U_q\to U_p\) such that \(\psi(y)=x,\) where \(\psi(z)=f(z)z\) for a continuous function \(f:U_q\to G\) such that \(g=f(q)hf(q)^{-1}\).  As  discussed in \sref{orbifolds}, \(f(hz)hf(z)^{-1}=f(q)hf(q)^{-1}=g\) for all \(z\in U_q.\)  Since \(h\) fixes \(y,\) 
	\[g=f(hy)hf(y)^{-1}=f(y)hf(y)^{-1}.\]
	If follows that 
	\[[g,x]=[f(y)hf(y)^{-1},f(y)y]=[h,y].\]
	
	Conversely, assume that \([g,x]=[h,y],\) so \(g=khk^{-1}\) and \(x=ky\) for some \(k\in G.\)  Since \(ky\in U_p,\) we can choose a slice neighborhood \(U_y\subset U_q\) and a function \(f:U_y\to G\) such that \(f(y)=k\) and \(f(z)z\in U_p\) for all \(z\in U_y, \) which induces an open embedding \(\phi:U_y^h\to U_p^{khk^{-1}}=U_p^g\)  such that \(\phi(y)=f(y)y=x.\)  Together with the inclusion \(U_y^h\hookrightarrow U_q^h\) this embeddings identifies \(x\) and \(y\) in \(\Lambda X.\)  This proves the existence of the diffeomorphism \(\phi.\)  \end{proof}

Let \(\bar I=\cup_i\bar I_i\) be the decomposition into connected components, which may have different dimensions. Let \(\operatorname{Proj}_{TM}\) be the orthogonal projection onto \(\{0\}\oplus TM\subset TG\times TM.\)  Since the action of \(G\) on \(M\) is almost free, \(\operatorname{Proj}_{TM}\) is injective on \(T\bar I_i.\)  Let \(\bar N_i\) be the orthogonal complement in \(\{0\}\oplus TM\) of  \(\operatorname{Proj}_{TM}(T\bar I_i)\).   Then \(\bar N_i\to \bar I_i\) is a \(G-\) equivariant vector bundle , and the derivative \(dT:TG\times TM\to TG\times TM\) preserves \(\bar{N}_i\).   There is a \(G-\)invariant decomposition 
\[\overline N_i=\overline N_{i,\pi}\oplus\bigoplus_{j}\overline N_{i,j}\]  
such that \(dT|_{\overline N_{i,\pi}}=-1\) and \(dT|_{\overline N_{i,j}}=e^{\sqrt{-1}\theta_j}\) with respect to a \(G\)-invariant complex structure on \(\bar N_{i,j}.\) See the proof of \lref{lgm} for details.    For our purposes, we assume \(\bar N_{i,\pi}=\{0\}\) for all \(i.\)  

Let \(c( N_{i,j})=\prod_{k}(1+x_{i,j,k})\) be a formal splitting of the Chern class of the complex orbifold vector bundle \(N_{i,j}=G\bs \bar N_{i,j}\to I_i=G\bs \bar I_i.\)  Choose \((g,p)\in\bar I_i\), let \(G_p\) be the stabilizer at \(p\) of the \(G\) action on \(M\), and define \(C_i\) to be the order of the ineffective kernel of the centralizer \(Z_{G_p}(g)\) on the horizontal space at \((g,p)\) of the \(G\) action on \(\bar I_i.\)

We orient \(\bar I_i\) such that \(\operatorname{Proj}_{\{0\}\oplus TM}(T\bar I_i)\oplus \bar N_i\) induces the orientation on \(TM\) and orient \(I_i\) as the quotient of \(\bar I_i.\)    
  
\begin{lem}\label{lgm}
	\[\int_{\Lambda(G\bs M)}L_\Lambda=\sum_{i}\ov{C_i}\int_{I_i}L(TI_i)\prod_{j,k}\coth\left(x_{i,j,k}+\sqrt{-1}\frac{\theta_{i,j}}{2}\right).\]
	
	and if \(G\bs M\) is spin, 
	\[\int_{\Lambda(G\bs M)}\hat{A}_\Lambda=\sum_{i}\frac{\ep_i}{C_i}\int_{I_i}\hat{A}(TI_i)\prod_{j,k}\text{\normalfont csch}\left(\frac{x_{i,j,k}}{2}+\sqrt{-1}\frac{\theta_{i,j}}{2}\right)\]
	for \(\ep_i=\pm1.\)
	\end{lem}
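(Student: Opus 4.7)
The plan is to work componentwise using the diffeomorphism $I \cong \Lambda(G\bs M)$ of \lref{lgm1}, and translate each piece of the pointwise data entering the definitions of $L_\Lambda$ and $\hat A_\Lambda$ in \sref{index} into the globally defined objects $N_{i,j}$, $\theta_{i,j}$ and $C_i$ on each component $I_i$. The integrals then split as sums over components, and each summand is an orbifold integral over $I_i$ of a form built from $L(TI_i)$ or $\hat A(TI_i)$ and the Chern roots $x_{i,j,k}$.

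Fix a component $\bar I_i$ with representative $(g,p) \in \bar I_i$. In the proof of \lref{lgm1}, the slice $\{g\} \times U_p^g$ for the $G$-action on $\bar I_i$ is identified with the orbifold chart $U_p^g$ for $\Lambda(G\bs M)$ centered at $(p,[g])$, and both charts carry the same isotropy group $Z_{G_p}(g)$. On this slice, $dT$ restricted to the vertical vectors $(0,Y) \in T_pM$ acts as $g_*$, so the fiberwise decomposition of $\bar N_i$ under $dT$ restricts on $\{g\} \times U_p^g$ to the decomposition of $NU_p^g$ under $g$ used in the local definitions of $L_g(h)$ and $\hat A_g(h)$. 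Hence the complex line bundles with Chern roots $x_{i,j,k}$ restrict on the slice to the formal splitting used locally, and the angles match. Moreover, the horizontal space of the $G$-action on $\bar I_i$ at $(g,p)$ is $T_pU_p^g$, so the two ineffectiveness multiplicities agree: $C_i = m(p,[g])$. Combining these identifications, the local forms $L_\Lambda(h)$ and $\hat A_\Lambda(h)$ on the chart $U_p^g$ are precisely the restrictions to the slice of the $G$-invariant forms on $\bar I_i$ whose descent to $I_i$ equals
\[
\tfrac{1}{C_i}L(TI_i)\textstyle\prod_{j,k}\coth\bigl(x_{i,j,k}+\sqrt{-1}\,\theta_{i,j}/2\bigr)
\]
and the analogous expression for $\hat A_\Lambda$. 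Summing the orbifold integrals over the components $i$ gives the two displayed formulas.

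The main obstacle is verifying that the pointwise eigenspace decomposition $\bar N_i = \bigoplus_j \bar N_{i,j}$ is actually a smooth $G$-invariant bundle decomposition with constant angles $\theta_{i,j}$ on each connected component. I would prove this by noting that $T$ and the $G$-action are both isometries, and $T$ fixes $\bar I_i$ pointwise, so $dT$ is a parallel automorphism of $\bar N_i$ with respect to the restriction of the Levi-Civita connection. Its eigenspaces and associated complex structures are then parallel and $G$-invariant, whence smoothness and the constancy of $\theta_{i,j}$ along the connected $\bar I_i$. The same parallelism lets the local complex structures descend to orbifold complex structures on $N_{i,j}\to I_i$. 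Finally, the sign $\ep(g)$ appearing in \eqref{ahatgdef} depends on a lift to $\Spin(n)$ and on choices of half-angles $\theta_{i,j}/2$, both of which can be made $G$- and parallel-invariantly on $\bar I_i$; the resulting $\ep(g)$ is locally constant on $\bar I_i$ and descends to a single $\ep_i \in \{\pm 1\}$ per component, completing the proof.
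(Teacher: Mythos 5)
Your approach is essentially the same as the paper's: identify the slice chart $\{g\}\times U_p^g$ for the $G$-action on $\bar I_i$ with the orbifold chart $U_p^g$ of $\Lambda(G\bs M)$, show the eigenbundle decomposition of $\bar N_i$ under $dT$ restricts to the decomposition of $NU_p^g$ under $g$, match $C_i$ with $m(p,[g])$ via the horizontal space, and descend the forms. Your parallelism argument for the smoothness and $G$-invariance of the decomposition (using that $\bar I$ is the fixed-point set of an isometry, hence totally geodesic, so $dT$ is parallel on $\bar N_i$) is valid and is a legitimate alternative to the paper's device, which instead constructs an explicit local isomorphism $\Phi:G\times_{Z_{G_p}(g)}NU_p^g\to\bar N_i$ intertwining $g$ and $dT$ and then appeals to uniqueness of the eigendecomposition to glue local definitions.

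However, there is a genuine gap: you never check that the diffeomorphism $\phi:\Lambda(G\bs M)\to I$ of \lref{lgm1} is orientation preserving, and the final step of your argument ("Summing the orbifold integrals over the components $i$ gives the two displayed formulas") silently assumes it. Matching the integrands only gives the equality of integrals up to sign. The point is that $\Lambda X$ is oriented in \sref{index} by requiring the orientation on $U_p^g$, the complex orientation on $NU_p^g$, and the orientation on $V_p\cong\fg$ to produce the orientation of $T_pM$, whereas $I_i$ is oriented as the quotient of $\bar I_i$, which is oriented by requiring $\operatorname{Proj}_{\{0\}\oplus TM}(T\bar I_i)\oplus \bar N_i$ to induce the orientation on $TM$. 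These are a priori different prescriptions and must be reconciled; the paper devotes a paragraph to this, using \eqref{projcomp} and the fact that $(H_p^g)^\perp$ is even-dimensional to show the two induced orientations on $H_p^g$ agree. Without that check, your proof establishes the formulas only up to a possible sign on each component, which is not enough.
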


%\begin{lem}\label{lgm}
%	\(\Lambda(G\bs M)\) is diffeomorphic to \(I\) and 
%	\[\int_{\Lambda(G\bs M)}L_\Lambda=\sum_{i}\ov{M_i}\int_{I_i}L(TI_i)\prod_{j,k}\coth\left(x_{i,j,k}+\sqrt{-1}\frac{\theta_{i,j}}{2}\right).\]
	
%	If \(G\bs M\) is spin, 
%	\[\int_{\Lambda(G\bs M)}\hat{A}_\Lambda=\sum_{i}\frac{\ep_i}{M_i}\int_{I_i}\hat{A}(TI_i)\prod_{j,k}\text{\normalfont csch}\left(\frac{x_{i,j,k}}{2}+\sqrt{-1}\frac{\theta_{i,j}}{2}\right)\]
%	for \(\ep_i=\pm1.\)
%\end{lem}	

We give a full definition of  \(\ep_i\) in \sref{signs} ; for our purposes, we will be able to use the following proscription:  if \((g,p)\in\bar I_i\) and \(g\) has odd order \(r,\) then \(r\theta_{i,j}\in2\pi\f{Z}\) and
\(\ep_i=\prod_{j,k}\cos\left(\frac{r\theta_{i,j}}{2}\right).\)  One can consider the case \(\bar N_{i,\pi}\neq 0\) similarly, but the added complexity is unnecessary for our purposes.

\begin{proof} 
	
	Observe that 
	\begin{equation}\label{ti}T_{(g,p)}\bar I=(T_{(g,p)}G\times M)^{dT}=\{(v,w)\in T_gG\times T_pM |v^*(p)+dg(w)=w\}\end{equation}
	where \(v^*\) is the action field on \(M\) corresponding to the right invariant vector field on \(G\) extending \(v\).  Let \(H_p^{g}\) the fixed point set of \(g\) acting on \(H_p\), and \((H_p^{g})^\perp\) the orthogonal complement of \(H_p^{g}\) in \(H_p\).  We will use several times the identification, which follows from \eqref{ti}, 
	\begin{equation}\label{projcomp}\text{Proj}_{T_pM}(T_{(g,p)}\bar I)=V_p\oplus H_p^{g}\text{ \ \ and \ \ } \overline N_{(g,p)}=(H_p^{g})^\perp.\end{equation}

	%Since \(\phi\), \(L_\Lambda,\) and \(\hat{A}_\Lambda\) are defined locally in the chart \(U_x^g\) for \(\Lambda X,\) to prove the formulas in the lemma we need to check that the pullback by \(\phi\) of the global forms on the right sides of the equations match the definition of \(L_\Lambda\) and \(\hat{A}_\Lambda.\)  

	Just as in the proof of \lref{lgm1}, let \(U_p\) be a slice neighborhood of \(M\) such that \(T_pU_p=H_p\).  Let \(NU_p^g\) be the normal bundle of \(U_p^g\subset U_p\).  Define a vector bundle
	\[G\times_{Z_{G_p}(g)}NU_p^g\to G\times_{Z_{G_p}(g)}U_p^g\]
	and a bundle map \(\Phi:G\times_{Z_{G_p}(g)}NU_p^g\to \overline N_i|_{\bar I\cap Y}\) covering the embedding \eqref{embedding} by identifying \(v\in N_xU_p^g\subset T_xM\subset T_{(g,x)}G\times M\) and setting 
	\[\Phi([h,v])=\text{Proj}_{\overline N_{i}}(hv).\]
	Since \(T_pU_p^g=H_p^{g},\) by \eqref{projcomp} \(N_pU_p^g=(H_p^{g})^\perp.\)  Observing that \(h(H_p^g)^\perp=(H_{hp}^{hgh^{-1}})^\perp=\bar N_{(hgh^{-1},hp)}\) we see that \(\Phi\) is an isomorphism on the fiber over \([h,p].\) Shrinking \(U_p\) if necessary we can assume \(\Phi\) is an isomorphism.  Since \(hgh^{-1}\) acts by isometries on \(T_{hx}M\) , preserves \(\overline{N}_{i}|_{(hgh^{-1},hx)},\) and is equivalent to the action of \(dT\) on \(\overline{N}_{i}|_{(hgh^{-1},hx)},\)
	\begin{equation}\label{equivariance}\Phi([h,gv])=\operatorname{Proj}_{\bar N_{i}}(hg v)=hgh^{-1}\operatorname{Proj}_{\bar N_{i}}(hv)=dT\Phi([h,v])\end{equation}

	Recall from \sref{index} the decomposition \(NU_p^g=\oplus_{j}N_{g,j}\) such that \(g|_{N_{g,j}}=e^{\sqrt{-1}\theta_{g,j}}\), \(\theta_{g,j}\in(0,\pi)\) \(\theta_{g,1}< \theta_{g,2}<...\), with respect to a complex structure on \(N_{g,j}.\)  The decomposition and complex structures are \(Z_{G_p}(g)-\)invariant, and extends to decomposition
	\[G\times_{Z_{G_p}(g)}NU_p^g=\oplus_jG\times_{Z_{G_p}(g)}N_{g,j}.\]  On \(\overline I_i\cap Y\), define \(\bar N_{g,j}=\Phi(G\times_{Z_{G_p}(g)}N_{g,j}).\)  \(\Phi\) induces a complex structure on \(\bar N_{g,j}\), and by \eqref{equivariance} \(dT_{\bar N_{g,j}}=e^{i\theta_{g,j}}\).    Since the decomposition of \(\bar N_i\) and the complex structures are uniquely determined by \(dT,\) they do not depend on the choice of local isomorphism \(\Phi.\)  Furthermore, the quantities \(\theta_{g,j}\) are locally constant and thus constant on each component \(\bar I_i;\) we relabel them \(\theta_{i,j}\), and relabel \(\bar N_{i,j}\), accordingly (that is, if \((g,p),(g',p')\in \bar I_i\) it must be the case that \(\theta_{g,j}=\theta_{g',j}=:\theta_{i,j}\).)  Since \(dT\) is \(G-\)equivariant, the decompositions and complex structures are as well.

	The local diffeomorphism \(\tilde \phi:U_p^g\to\{g\}\times U_p^g\) between orbifold charts of \(\Lambda X\) and \(I_i\) used to define \(\phi\) in the proof of \lref{lgm1} is covered by \(\Phi|_{U_p^g}:N_{g,j}\to\overline{N}_{i,j}|_{\{g\}\times U_p^g}\) where \((g,p)\in\bar I_i\). By definition \(\overline{N}_{i,j}|_{\{g\}\times U_p^g}\) is the pullback to this chart of \(N_{i,j}=G\bs \bar N_{i,j}.\)  Thus considered as local forms on this chart, 
	\[c(\bar N_{i,j})=c(N_{i,j})=\prod_k(1+x_{i,j,k})\]
	and \(c(N_{g,j})=\prod_k(1+\tilde\phi^*(x_{i,j,k}))\).  Similarly, identified as forms in this chart  \(L(TI_i)=L(T\{g\}\times U_p^g)\) which  pulls back under \(\phi\) to \(L(TU_p^g)\).  Recalling that our relabeling is such that \(\theta_{g,j}=\theta_{i,j}\), by the definition of \(L_\Lambda\)
	
		\begin{align*}L_\Lambda|_{U_p^g}=\ov{m(([p],g))}L_g&=\tilde\phi^*\left(\ov{m(([p],g))}L(TI_i)\prod_{j,k}\left(x_{i,j,k}+\sqrt{-1}\frac{\theta_{i,j}}{2}\right)\right)\end{align*}
	
	\iffalse  
	\

	\(\{g\}\times U_p^g\) is an orbifold chart for \(I_i,\) and by definition \(\overline N|_{\{g\}\times U_p^g}\) is the pullback to this chart of \(N_i.\)  It follows that the Chern form of \(\overline N|_{\{g\}\times U_p^g}\) is exactly the Chern form, as defined locally in this chart, of \(N_i,\) given by 
	\[c(N_i)=\prod_jc(N_{i,j})=\prod_{j,k}(1+x_{i,j,k}).\]
	Note that for \(v\in N_xU_p^g\), since \(g\) preserves \(\overline N_{(g,x)}\) and acts by isometries
	\[\text{Proj}_{\bar N}(g(v))=g\text{Proj}_{\bar N}(V).\] Since \(dT|_{\overline N_{(g,x)}}=g|_{\overline N_{(g,x)}}\)  we conclude that 
	\[\Phi(gv)=dT\Phi(v).\]
	It follows that \(g|_{\Phi^{-1} (N_{i,j})}=e^{i\theta_j}\) with respect to the complex structure pulled back by \(\Phi.\) Furthermore \(c(\Phi^{-1}(N_{i,j}))=\prod_k\left(1+\tilde\phi^*(x_{i,j,k})\right) \).    
	Applying the definition of \(L_\Lambda\) we conclude that on the chart \(U_p^g\)

	\begin{align*}L_\Lambda=\ov{m(([p],g))}L_g&=\ov{m(([p],g))}L(TU_p^g)\prod_{j,k}\left(\tilde\phi^*(x_{i,j,k})+i\frac{\theta_{i,j}}{2}\right)\\&=\tilde\phi^*\left(\ov{m(([p],g))}L(T(\{g\}\times U_x^g))\prod_{j,k}\left(x_{i,j,k}+i\frac{\theta_{i,j}}{2}\right)\right)\end{align*}
	\fi 
	Furthermore, since \(T_pU_p^g=H_p^{g}\) is exactly the horizontal part of \(T\bar I_{(g,p)}\) (see \eqref{ti}), \(m(([p],g)),\) which is defined as the order of the ineffective kernel of \(Z_{G_p}(g)\) acting on \(U_p^g\), is  equal to \(C_i\), the order of the ineffective kernel of the action on \(H_p^{g}.\)  Since \(\tilde\phi\) is the local definition of \(\phi,\)
	
	\[L_\Lambda|_{I_i}=\phi^*\left(\ov{C_i}L(TI_i)\prod_{j,k}\left(x_{i,j,k}+\sqrt{-1}\frac{\theta_{i,j}}{2}\right)\right).\]
	
	The first formula in the lemma follows once we confirm that \(\phi\) is orientation preserving.  Since \(\tilde\phi\) is essentially the identity, this amounts to showing that the orientations defined on \(\Lambda X\) and \(I\) induce the same orientation on \(H_p^g=T_pU_p^g\).     The orientation on \(\Lambda X\) (see \sref{index}), is defined such that the orientation on \(H_p^g\), the orientation induced by the complex structure on \(N_pU_p^g={H_p^g}^\perp\), and the natural orientation on \(V_p\cong\mathfrak{g}\) induce the orientation on 
	\[T_pM=(H_p^g\oplus (H_p^g)^\perp)\oplus V_p.\]
	If \(V_{(g,p)}^{\bar I} \subset T_{(g,p)}\bar I\) is the vertical bundle of the \(G\) action on \(\bar I,\) then Proj\(_{T_pM}(V_{(g,p)}^{\bar I})=V_p,\) and Proj\(_{T_pM}\) commutes with the identification with \(\mathfrak{g}.\)  Then by the definition of the orientation on \(I,\) the orientation on \(H_p^g\) is induced by the isomorphism 
	\[T_pM=\text{Proj}_{T_pM}(T_{(g,p)}\bar I)\oplus\overline{N}_{(g,p)}\cong( H_p^g\oplus V_p)\oplus (H_p^g)^\perp,\] see \eqref{projcomp}.  Since \((H_p^g)^\perp\) admits a complex structure and is even dimensional, the two orientations on \(H_p^g\) agree, and \(\phi\) is is orientation preserving.

	%On the level of charts, \(\phi=\tilde\phi:U_x^g\to\{g\}\times U_x^g.\)  \(U_x^g\) is oriented such that the orientation on  \(T_xU_x^g=H_x^{dg}\) and the orientation induced by the complex structure on \(N_xU_x^g=\overline N_{(g,x)}\) induce the orientaiton on \(T_xU_x=H_x^{dg}\oplus \overline N_{(g,x)},\) which is in turn oriented such that the orientation on \(T_xU_x=H_x\) and the orientation on \(V_x\cong \mathfrak{g}\) induce the orientation on \(T_xM=H_x^{dg}\oplus \overline N_{(g,x)}\oplus V_x.\)  
	
	%	\(Y=G\bs I\) is oreinted such that the orientation on \(T(\{g\}\times U_x^g)=H_g^{dg}\) and the orientation on the vertical subbundle of \(\mathfrak{g}\cong V^I_{(g,x)}\subset TI_{(g,x)}\) induce the orientation on \(TI_{(g,x)}=H_x^{dg}\oplus V^I_{(g,x)}\), which in turn is oriented such that 
	%	\[\text{Proj}_{T_xM}\left(TI_{(g,x)}\right)=H_x^{dg}\oplus\text{Proj}(V^I_{(g,x)})=H_x^{dg}\oplus V_{x}\] and \(\overline{N}_{(g,x)}\) induce the orientation on \(T_xM=H_x^{dg}\oplus V_{x}\oplus \overline N_{(g,x)}.\)
	
	%	Since \(\overline {N}\) is a complex bundle, and has even real dimension, the two orientations on \(T_xM\) are the same if the two orientations on \(H_x^{dg}\) are the same.  Thus \(\phi\) is orientation preserving.  

	The final equation follows similarly, after noting that \(\ep(g)=\ep_i\); see \sref{signs} for the details. % noting that \(\ep(S)\) is determined using the lift of \(dg:\SO(H_x)=\SO(T_xU_x)\to \SO(H_x)\) given by the action of \(G\) on the spin structure of \(H,\) which induces the action of \(G_p\subset G\) on the spin structure of \(TU_x\), and is therefore the same as the lift used to define \(\ep(g)\).  

\end{proof}

\section{Inertia orbifolds from Cohomogeneity 1 actions}\label{coho1inertia}

In this section we apply \lref{lgm1} and  \lref{lgm} to describe the inertia orbifolds of quotients \(W\) of certain almost free subactions of cohomogeneity one actions, and orbifold bundles associated to those  actions.  The integrals  \(\int_{\Lambda_sW}L_\Lambda\) and \(\int_{\Lambda_sW}\hat{A}_\Lambda\) can then be computed in therms of the data of the cohomogeneity one actions.  

Let a compact connected Lie group \(G\) act with cohomogeneity one on a closed manifold P with group diagram
\(H\subset K_\pm \subset G,\) as described in \sref{coho1prelim}.  
Let \(J\) be a connected normal subgroup of \(G\) and \(V^n\) a vector space equipped with a representation of \(G.\)   We consider the inertia orbifold of \(W=P\times_{J}D^n\) (with boundary, if \(n>0\))  where \(D^n\subset V^n\) is the unit disc, under the assumptions:  

\begin{enumerate}
	\item \(H\) is finite, \(J\cap H=\{1\}\) and \(J\cap \K_\pm=\Gamma_\pm\) is finite group of odd order contained in \(K_\pm^0\), the connected component of the identity in \(K_\pm.\)
	\item  \(\Gamma_\pm\) acts freely on \(V\bs \{0\}\). 
	%\item \(K_\pm^0\) is the  centralizer subgroup of \(K_\pm\) for each nontrivial \(\gamma\in \Gamma_\pm.\)
	\item The centralizer \(Z_{K_\pm}(\gamma)=K_\pm^0\) for all nontrivial \(\gamma\in\Gamma_\pm\)
\end{enumerate}

Note that since \(J\)  is normal and \(\Gamma_\pm\) is finite it follows that any element of \(\Gamma_\pm\) is in the center of \(K_\pm^0,\) and thus \(\Gamma_\pm\) is abelian.  Indeed (3) is an assumption about the elements of \(K_\pm\bs K_\pm^0.\)

Recall from \sref{coho1prelim} that \(K_\pm\) acts on \(\ar{d_\pm}\) by extension of the action on \(K_\pm/H\cong S^{d_\pm-1}\).   Since \(J\cap H\) is trivial and \(J\) is normal \(\Gamma_\pm\) acts freely on \(\ar{d_\pm}\bs\{0\}.\)  It follows from representation theory that \(\ar{d_\pm}\) and  \(V\) admit complex structures such that the  representations of \(K_\pm^0\) are complex, and decompose into complex \(K_\pm^0\)-invariant subspaces \(\ar{d_\pm}\oplus V=\oplus_jV_{\pm,j}\) such that for \(\gamma\in\Gamma_\pm\), \(\gamma|_{V_{\pm,j}}=e^{i\theta_{\pm,j}(\gamma)}.\)   Let \(S_\pm\subset \Gamma_\pm\bs\{1\}\) contain a unique element of every nontrivial conjugacy class of \(K_\pm\) intersecting \(\Gamma_\pm.\)  

Let \(F\) denote the Lie group \(J\bs G\).  Then \(K^0_\pm\) acts on the right of \(F\) with isotropy \(\Gamma_\pm\) and quotient the smooth homogeneous space \(B_\pm=F/(K_\pm^0/\Gamma_\pm)\).     Finally, let \(c(V_{\pm,j})=\prod_{k}(1+x_{\pm,j,k})\) be the formal decomposition of the Chern class of the orbifold complex vector bundle (with smooth base)
\[F\times_{K^0_\pm}V_{\pm,j}\to B_\pm.\]

Let \(\mathfrak{j},\mathfrak{k}_\pm,\mathfrak{g},\mathfrak{f}\) denote the Lie algebras of \(J,K_\pm,G\) and \(F\) respectively.  Since \(J\cap K_\pm\) is finite, \(\mathfrak{k_\pm}\subset\mathfrak{f},\) and let \(\mathfrak{k}_\pm^\perp\subset \mathfrak{f}\) be the orthogonal complement.   We orient \(T_{[1]}F/K_\pm^0\cong \mathfrak{k}_\pm^\perp\) such that \(\mathfrak{k}_\pm^\perp\oplus\mathfrak{j}\oplus\mathfrak{k}_\pm\) induces the orientation on \(\mathfrak{g}\).

%Finally, let \(c(V_{\pm,j})=\prod_{k}(1+x_{\pm,j,k})\) be the formal decomposition of the Chern class of the orbifold complex vector bundle
%\[J\bs G\times_{K^0_\pm}V_{\pm,j}\to J\bs G/K_\pm^0.\]

%Let \(\mathfrak{j},\mathfrak{k}_\pm,\mathfrak{g}\) denote the Lie algebras of \(J,K_\pm\), and \(G\) respectively.  We oreint \(T_{[1]}J\bs G/K_\pm^0\cong (\mathfrak{j}+\mathfrak{k}_\pm)^\perp\) such that \((\mathfrak{j}+\mathfrak{k})^\perp\oplus\mathfrak{j}\oplus\mathfrak{k}_\pm\) induces the orientation on \(\mathfrak{g}\). 

Away from the singular orbits \(\pi^{-1}(\pm1)\) (see \sref{coho1prelim} for notation ), the orbits of \(P\) all have isotropy type \(H\), and form an open dense set equivariantly diffeomorphic to \(G/H\times (-1,1)\).  Thus \(P\) inherits an orientation from \(\mathfrak{g}\oplus\text{span}(\partial _r)\) where \(r\) is the coordinate on \((-1,1).\)  Giving \(V\) the orientation compatible with the complex structure, \(W\) inherits an orientation as a quotient of \(P\times D^n.\)     

We will discuss two orientations on \(\ar{d_\pm}\).  One is induced by the complex structure discussed above.  The other is induced by the identification by means of the \(K_\pm\) action, for \(x\neq0\), 
\[T_x\ar{d_\pm}=\text{span}(\partial_r)\oplus\mathfrak{k}_\pm\] 
where \(r\) is the radial coordinate on \(\ar{d_\pm}\).  

%\begin{thm}\label{coho1}
%	The set of connected components of \(\Lambda_sW\) is in bijection with \(S_-\cup S_+\), and each component is diffeomorphic to \(J\bs G/K_-^0\) or \(J\bs G/K_+^0\).  Furthermore, 
%	\begin{align*}\int_{\Lambda_sW}L_\Lambda=&-\frac{\sigma_-}{|\Gamma_-|}\sum_{\gamma\in S_-}\int_{J\bs G/K_-^0}L(T(J\bs G/K_-^0))\prod_{j,k}\coth\left(x_{-,j,k}+\frac{i\theta_{-,j}(\gamma)}{2}\right)\\&+\frac{\sigma_+}{|\Gamma_+|}\sum_{\gamma\in S_+}\int_{J\bs G/K_+^0}L(T(J\bs G/K_+^0))\prod_{j,k}\coth\left(x_{+,j,k}+\frac{i\theta_{+,j}(\gamma)}{2}\right)\end{align*}
%	Here \(\sigma_\pm=1\) if the two orientations on \(\ar{d_\pm}\) are the same.  %is compatible with the orientation induced at \(x\neq0\) by the identification \(T_x\ar{d_\pm}=\text{\normalfont span}\{\partial_r\}\oplus \mathfrak{k}_\pm,\) and \(-1\) otherwise.  %(IS THIS ALWAYS TRUE?).    
	
%	If \(W\) is spin, 
%	\begin{align*}\int_{\Lambda_sW}\hat{A}_\Lambda=&-\frac{\sigma_-}{|\Gamma_-|}\sum_{\gamma\in S_-}\ep(\gamma)\int_{J\bs G/K_-^0}\hat{A}(T(J\bs G/K_-^0))\prod_{j,k}\text{\normalfont csch}\left(x_{-,j,k}+\frac{i\theta_{-,j}(\gamma)}{2}\right)\\&+\frac{\sigma_+}{|\Gamma_+|}\sum_{\gamma\in S_+}\ep(\gamma)\int_{J\bs G/K_+^0}\hat{A}(T(J\bs G/K_+^0))\prod_{j,k}\text{\normalfont csch}\left(x_{+,j,k}+\frac{i\theta_{+,j}(\gamma)}{2}\right)\end{align*}
%	where 
%	\(\ep(\gamma)=\prod_{j,k}\cos\left(\frac{r\theta_{\pm,j}(\gamma)}{2}\right)\)
%	if \(\gamma\in S_\pm\) has  order \(r.\)   
	
%\end{thm}
\begin{thm}\label{coho1}
	The set of connected components of \(\Lambda_sW\) is in bijection with \(S_-\cup S_+\), and each component is diffeomorphic to \(B_-\) or \(B_+\).  Furthermore, 
	\begin{align*}\int_{\Lambda_sW}L_\Lambda=&-\frac{\sigma_-}{|\Gamma_-|}\sum_{\gamma\in S_-}\int_{B_-}L(TB_-)\prod_{j,k}\coth\left(x_{-,j,k}+\frac{i\theta_{-,j}(\gamma)}{2}\right)\\&+\frac{\sigma_+}{|\Gamma_+|}\sum_{\gamma\in S_+}\int_{B_+}L(TB_+)\prod_{j,k}\coth\left(x_{+,j,k}+\frac{i\theta_{+,j}(\gamma)}{2}\right)\end{align*}
	Here \(\sigma_\pm=1\) if the two orientations on \(\ar{d_\pm}\) are the same.  %is compatible with the orientation induced at \(x\neq0\) by the identification \(T_x\ar{d_\pm}=\text{\normalfont span}\{\partial_r\}\oplus \mathfrak{k}_\pm,\) and \(-1\) otherwise.  %(IS THIS ALWAYS TRUE?).    
	
	If \(W\) is spin, 
	\begin{align*}\int_{\Lambda_sW}\hat{A}_\Lambda=&-\frac{\sigma_-}{|\Gamma_-|}\sum_{\gamma\in S_-}\ep_-(\gamma)\int_{B_-}\hat{A}(TB_-)\prod_{j,k}\text{\normalfont csch}\left(x_{-,j,k}+\frac{i\theta_{-,j}(\gamma)}{2}\right)\\&+\frac{\sigma_+}{|\Gamma_+|}\sum_{\gamma\in S_+}\ep_+(\gamma)\int_{B_+}\hat{A}(TB_+)\prod_{j,k}\text{\normalfont csch}\left(x_{+,j,k}+\frac{i\theta_{+,j}(\gamma)}{2}\right)\end{align*}
	where 
	\(\ep_\pm(\gamma)=\prod_{j,k}\cos\left(\frac{r\theta_{\pm,j}(\gamma)}{2}\right)\)
	if \(\gamma\in S_\pm\) has  order \(r.\)   \end{thm}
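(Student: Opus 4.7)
The plan is to realize $\Lambda W$ via \lref{lgm1} applied to the $J$-action on $M := P \times D^n$, identify the singular components as homogeneous-space quotients, and then feed these into \lref{lgm} to evaluate both integrals.

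First I would identify the $J$-stabilizers on $M$. Because $J \cap H = \{1\}$ and $J \cap K_\pm = \Gamma_\pm$, a point $p$ on a principal $G$-orbit has trivial $J$-stabilizer, whereas a point on the $\pm$ singular orbit has $J$-stabilizer conjugate to $\Gamma_\pm$. Since $J$ is normal in $G$, the same argument that forces $\Gamma_\pm \cap k H k^{-1} = \{1\}$ for every $k \in K_\pm$ shows that $\Gamma_\pm$ acts freely on $\ar{d_\pm} \setminus \{0\}$; combined with assumption (2), this forces the combined $J$-stabilizer of $(p, v) \in M$ to be nontrivial only when $p$ lies on a singular orbit and $v = 0$. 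Invoking the local description from the proof of \lref{lgm1} then produces
\[
\bar I_{s,\pm} \cong G \times_{K_\pm} (\Gamma_\pm \setminus \{1\}),
\]
with the residual $J$-action by left translation on the $G$-factor.

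Next I would decompose by $K_\pm$-conjugacy class. Assumption (3) gives $Z_{K_\pm}(\gamma) = K_\pm^0$ for every nontrivial $\gamma \in \Gamma_\pm$, so each conjugacy class is diffeomorphic to $K_\pm / K_\pm^0$ and the associated component of $\bar I_{s,\pm}$ is diffeomorphic to $G / K_\pm^0$. Because $\Gamma_\pm \subset J$ and $J$ is normal, $\Gamma_\pm$ acts trivially on $F = J \backslash G$ from the right, so the $J$-quotient of each component is $F / (K_\pm^0 / \Gamma_\pm) = B_\pm$. This establishes the claimed bijection between $\pi_0(\Lambda_s W)$ and $S_- \cup S_+$. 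To apply \lref{lgm} I would then compute the normal bundle via the slice theorem: at a singular orbit it is associated to the $K_\pm^0$-representation on $\ar{d_\pm} \oplus V$, so $\bar N_i$ descends to the orbifold bundle $F \times_{K_\pm^0}(\ar{d_\pm} \oplus V) \to B_\pm$, and the complex decomposition $\bigoplus_j V_{\pm, j}$ transports directly to the decomposition $\bar N_i = \bigoplus_j \bar N_{i, j}$ with rotation angles $\theta_{\pm, j}(\gamma)$ and Chern roots $x_{\pm, j, k}$. The same trivial action of $\Gamma_\pm = Z_{\Gamma_\pm}(\gamma)$ on $T B_\pm$ used above now identifies the multiplicity as $C_i = |\Gamma_\pm|$.

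The principal obstacle is the orientation bookkeeping that produces $\sigma_\pm$ and the asymmetric overall sign between the $S_-$ and $S_+$ sums. The factor $\sigma_\pm$ arises because \lref{lgm} orients each inertia component via the complex structure on $\bar N_i$, while the orientation of $W$ is inherited from the $P$-orientation $\mathfrak{g} \oplus \operatorname{span}(\partial_r)$, and these two conventions on $\ar{d_\pm}$ differ exactly by $\sigma_\pm$. The asymmetry between the two ends is built into the cohomogeneity one setup: the radial coordinate of the disc slice at the $\pm$ singular orbit points in the direction $\mp \partial_r$, so translating the $P$-orientation into the action orientation $\mathfrak{k}_\pm \oplus \operatorname{span}(\partial_r)$ of the slice introduces a single additional sign that flips only on the $-$ side. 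Assembling these signs with the data of the normal bundle inside \lref{lgm} and pulling the factor $1/|\Gamma_\pm|$ out front gives the stated formula for $\int_{\Lambda_s W} L_\Lambda$. The computation for $\int_{\Lambda_s W} \hat{A}_\Lambda$ is identical in structure, the only new ingredient being the sign $\epsilon_i = \epsilon_\pm(\gamma)$ from \lref{lgm}; for $\gamma$ of odd order $r$ it reduces to the cosine product recorded in \sref{signs}.
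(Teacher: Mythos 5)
Your proposal follows the same overall strategy as the paper: invoke Lemma \ref{lgm1} for the $J$-action on $P\times D^n$, use assumptions (1)--(3) to locate the nontrivial stabilizers over the zero sections of the singular tubes, parametrize the singular components by $K_\pm$-conjugacy classes of $\Gamma_\pm\setminus\{1\}$ (your description $G\times_{K_\pm}(\Gamma_\pm\setminus\{1\})$ is the same data as the paper's explicit diffeomorphism $S_\pm\times G/K_\pm^0\to\bar I_\pm$), identify the normal bundle as $F\times_{K_\pm^0}(\ar{d_\pm}\oplus V)$, compute $C_i=|\Gamma_\pm|$ from normality of $J$, and then feed everything into Lemma \ref{lgm}. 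This is essentially the paper's proof.

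The one place I would push back is your account of the overall $\mp$ sign in front of the two sums. You attribute it to a "single additional sign" coming from the fact that $\partial_{r_\pm}$ points in the direction $\mp\partial_r$; but if that were the only source, the signs would come out backwards (positive on the $-$ end, negative on the $+$ end). The paper's bookkeeping shows the net sign is a product of two contributions on each end: a parity factor $(-1)^{d_\pm-1}=-1$ from commuting $\operatorname{span}(\partial_{r_\pm})$ past the $(d_\pm-1)$-dimensional $\mathfrak{k}_\pm$ in the comparison of $\mathfrak{k}_\pm^\perp\oplus\mathfrak{j}\oplus\operatorname{span}(\partial_{r_\pm})\oplus\mathfrak{k}_\pm$ with $\mathfrak{g}\oplus\operatorname{span}(\partial_r)$, and the radial sign you mention, which is $+1$ on the $-$ end and $-1$ on the $+$ end. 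Only the product $(-1)\cdot(+1)=-1$ and $(-1)\cdot(-1)=+1$ yields the stated $-\sigma_-/|\Gamma_-|$ and $+\sigma_+/|\Gamma_+|$. Your proposal gets the correct answer but does not record the odd-dimensional parity factor, and the phrase "flips only on the $-$ side" contradicts your own observation that the radial direction flips at the $+$ end; this part needs to be tightened when written out in full.
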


\begin{proof}
	%Let \(\gamma_0\in \Gamma\) be an element such that \(\gamma_0^2\neq 1.\)  Since \(\Gamma\) is a discrete normal subgroup of \(K_\pm^0\) it is in the center of \(K_\pm^0.\)  It follows from basic representation theory that either \(\gamma_0|_{V_j}=\pm1\) or \(V_j\) admits a \(K_\pm^0\)-invariant complex structure such that \(\gamma_0|_{V_j}=e^{i\theta_j(\gamma_0)}\) for some \(\theta_j(\gamma_0)\in(0,\pi).\)  However, \(\gamma^2\neq 1\) fixes only \(0\in V_j,\) so \(\gamma_0|_{V_j}\neq \pm1\) and the latter case holds.  For any other element of \(\Gamma,\) Shurs lemma now implies that \(\gamma|_{V_j}=e^{i\theta_j(\gamma)}\).       

	%(the ``basic rep theory":	If we complexify \(\gamma_0:V_j\to V_j\) has eigenvalues \(e^{\pm i\theta_j}.\)  Since \(\gamma_0\) is real, it intertwines subspaces corresponding to \(\theta_j\) and \(-\theta_j\)  if \(\theta_j\neq 0\) or \(\pi\).  The intertwining can be used to form a real subspace with a complex structure with the desired properties.  Since \(K_\pm^0\) commutes with \(\gamma,\) it must preserve teh eigenspaces, and it follows that it preserves the complex structure.  ) 

The nontrivial stabilizer groups of the action of \(J\) on \(P\) are \(g\Gamma_\pm g^{-1}\) at \([g,0]\in G\times_{K_\pm}D^{d_\pm}\subset P.\)  %\(J\) is normal, so \(J\cap gK_\pm g^{-1}=g(J\cap K_\pm)g^{-1}\)
Since \(g\Gamma_\pm g^{-1}\) acts freely on \(V\bs\{0\}\) the nontrivial stabilizer  groups of \(J\) on \(P\times D^n\) are  \(g\Gamma_\pm g^{-1}\) at \(([g,0],0)\in (G\times_{K_\pm}D^{d_\pm})\times D^n.\)  Thus \(\bar I=\bar I_1\cup \bar I_-\cup \bar I_+\subset G\times P\times D^n\)  where 
\[\bar I_1=\{1\}\times P\times D^n\]
\[\bar I_\pm=\left\{(g\gamma g^{-1},([g,0],0))| \gamma\in\Gamma_\pm, g\in G\right\}.\]

We define a diffeomorphism 

\begin{equation}\label{icomps}S_\pm\times G/K_\pm^0\to \bar I_\pm\end{equation}
\[(\gamma,[g])\mapsto (g\gamma g^{-1},([g,0],0))\]
and note that the the diffeomorphism is equivariant with respect to the \(G\) action on the left of \(G/K_\pm^0\) and the action on \(\bar I\).  One checks easily that the map is well defined and surjective.  To check that it is injective, suppose 
\[(g\gamma g^{-1},([g,0],0))=(h\delta h^{-1},([h,0],0))\]
for \(\gamma,\delta\in S_\pm\) and \(g,h\in G.\)  Then \(h=gk\) for some \(k\in K_\pm\), and 
\[g\gamma g^{-1}=h\delta h^{-1}=gk\delta k^{-1}g^{-1}.\]
Thus \(\gamma\) and \(\delta\) are in the same conjugacy class in \(K_\pm;\) by the definition of \(S_\pm\), \(\gamma=\delta.\) Furthermore \(k\) commutes with \(\gamma\) so \(k\in K_\pm^0.\)  Thus 
\[(\delta,[h])=(\gamma,[gk])=(\gamma,[g]).\]    
We conclude that \(I_\pm=J\bs \bar I_\pm\) is diffeomorphic to \( S_\pm\times (J\bs G/K_\pm^0),\) proving the first part of the lemma.   

For each \(\gamma\in S_\pm\) let  \(\bar I_\gamma\subset \bar I_\pm\) be the image of  \(\{\gamma\}\times G/K_\pm^0\) under the diffeomorphism \eqref{icomps}.  These are the connected components of \(\bar I\bs \bar I_1\).  Let \(I_\gamma=J\bs \bar I_\gamma.\)    By equivariance \(C_{\gamma}\)is  the order of the ineffective kernel of \(Z_{\Gamma_\pm}(\gamma)=\Gamma_\pm\) on the horizontal subspace, with respect to \(J,\) of \(T_{(\gamma,[1])}(S_\pm\times G/K_{\pm}^0)\).   Indeed, since \(J\) is a normal subgroup of \(G,\) the adjoint action of \(J\) on \(\mathfrak{j}^\perp\subset \mathfrak{g}\) is trivial.  So \(\Gamma_\pm\subset J\) acts trivially on the horizontal subspace and  \(C_{\gamma}=\left|\Gamma_\pm\right|.\)

\(\overline{N}_\gamma\) is the orthogonal complement of the projection of \(T\bar I_\gamma\) to \(T(P\times D^n)\).  On \(\bar I_\pm\) that projection is fiberwise equal \(T((G\times_{K_\pm}\{0\})\times\{0\})\) and so \(\overline{N}_\gamma\) is fiberwise equal to the normal bundle of \((G\times_{K_\pm}\{0\})\times\{0\}\subset P\times D^n\) Since that is the zero section of a \(D^{d_\pm}\times D^n\) bundle, the normal bundle is \((G\times_{K_\pm}\ar{d_\pm})\times V.\)  \(\overline{N}_\pm=\coprod_{\gamma\in S_\pm}\overline N_{\gamma}\) is the pullback of that bundle to \(\bar I_\pm\):
\[\overline{N}_\pm=\left\{(g\gamma g^{-1},([g,x],v))|\gamma\in\Gamma_\pm, g\in G, x\in\ar{d_\pm}, v\in V\right\}.\] 

We define a bundle isomorphism, covering \eqref{icomps}, by
\[S_\pm\times \left(G\times_{K_\pm^0}(\ar{d_\pm}\times V)\right)\to \overline{N}_{\pm}\] 
\[(\gamma,[g,x,v])\to (g\gamma g^{-1},([g,x],gv)).\]

We note that the isomorphism is equivariant with respect to the left action of \(G\) on \(G\times_{K_\pm^0}(\ar{d_\pm}\times V)\) and the action on \(\overline{N}_\pm.\)  One checks easily that the map is well defined and surjective.  To check that it is injective, one need only check that is is fiberwise injective, which is apparent.

The action of \(dT\) corresponds under the diffeomorphism to the map 
\[(\gamma,[g,x,v])\mapsto (\gamma,[g,\gamma x,\gamma v]).\] It follows that for each \(\gamma\in S_\pm\)

\[\overline{N}_{\gamma}=\oplus_j \overline N_{\gamma,j}\cong\oplus_jG\times_{K_\pm^0}V_{\pm,j}\]
%	\[\overline N|_{\pi^{-1}(S_\pm^\gamma)}=\oplus_j\overline N_j^{\pi^{-1}(S_\gamma^\pm)}\cong G\times_{K_\pm^0}(\ar{d_\pm}\times V)=\oplus_jG\times_{K_\pm^0}V_j^\pm,\]
\(dT\) acts on \(\overline N_{\gamma,j}\) fiberwise as \(e^{i\theta_{\pm,j}(\gamma)}.\) The lemma follows from \lref{lgm}, up to a somewhat subtle discussion of orientation, which we undertake now.

We have seen that 
\[\text{Proj}_{TM}(T_{(g,p)}\bar I_-)=T_p((G\times_{K_-}\{0\})\times\{0\})\cong\mathfrak{k}_-^\perp\oplus\mathfrak{j}\] 
is the tangent space of the zero section of the \(D^{d_-}\times D^{n}\) bundle 
\[(G\times_{K_-}D^{d_-})\times D^n\subset P\times D^n\]
and \(\overline{N}_{(g,p)}\) can be identified with the \(D^{d_-}\times D^n\) fiber.  The proper orientation on \(\bar I_-\) will induce the orientation on \(P\times D^n\)	in conjunction with the orientation compatible with the complex structure on \(D^{d_-}\times D^n\subset \ar{d_-}\times V.\)  Away from the origin, \(T_xD^{d_-}=\text{span}(\partial r_-)\oplus\mathfrak{k}_-\) where \(r_-\) is the radial coordinate on \(D^{d_-}\) and \(\mathfrak{k}_-\) is identified by the action fields of \(K_-\) acting on \(\ar{d_-}.\)  Because of the quotient by \(K_-\) in the definition of the bundle, the action field of the \(K_-\) action on \(P\times D^n\) at \(([1,x],0)\) are the same as the image of the action fields on \(D^{d_-}.\)  Thus
\[\mathfrak{k}_\pm^\perp\oplus\mathfrak{j}\oplus\text{span}(\partial_{r_-})\oplus\mathfrak{k}_-\oplus T_0D^n\]
is opposite the orientation given on \(P\times D^n,\) which induced by 
\[\mathfrak{g}\oplus\text{span}(\partial_r)\oplus T_0D^n,\] noting the definition of the orientation on \(\mathfrak{k}_\pm^\perp\) given in the lemma, the fact that the dimension of \(\mathfrak{k}_-\), \(d_--1\), is odd, and \(\partial_r\) and the image of \(\partial_{r_-}\) are related by a positive constant.  If the two orientations on \(TD^{d_-}\) agree (the case \(\sigma_-=1\)) then the orientation on \(\bar I_-\) is the opposite of the one induced by \(\mathfrak{k}_\pm^\perp\oplus\mathfrak{j},\) and in turn the orientation induced on \(I_-\) is the opposite the one induced by \(\mathfrak{k}_-^\perp.\)  The diffeomorphism \eqref{icomps} is \(G\)- equivariant, so identifies the action fields  \(\mathfrak{k}_-^\perp\) in \(T_{[1]}J\bs G/K_-\) and \(TI_-\) , and \(-\sigma_-\) corrects the possible discrepancy between the orientation described  in the lemma and the induced orientation.

The analysis for \(I_+\) is similar, except if \(r_+\) is the radial coordinate on \(D^{d_+}\), \(\partial_{r_+}\) corresponds to \(-\partial_r\) up to scaling.  Thus the correction sign is \(\sigma_+.\)  
\
\end{proof}

\section{Highly Connected 7 Manifolds and Orbifold Bundles}\label{mfds}
	
%The exotic 7-spheres are members of a larger family of 2-connected 7 manifolds admitting nonnegative sectional curvature described in \cite{GKS}.  The 7-manifolds are quotients of 10 manifolds which admit cohomogeneity one actions by \(S^3\times S^3\times S^3\), where \(S^3\) represents the unit quaternions.  We review the description of such manifolds.  

In this section we describe the manifolds \(M_\du{a}{b}.\)  We will refer to the following subgroups of the unit quaternions \(S^3\subset\f{H}:\)
\[\Spin(2)=\{e^{i\theta}| \theta\in \ar{}\}\]
\[\Pin(2)=\Spin(2)\cup j\cdot\Spin(2)\]
\[Q=\{\pm1,\pm i,\pm j,\pm k\}.\]
Let \(\underline{a}=(a_1,a_2,a_3)\) and \(\underline{b}=(b_1,b_2,b_3)\) be triplets of integers such that
\begin{equation}\label{cond1} a_i,b_i=1\text{ mod }4\text{ for all }i\text{ \ \ \ \ and \ \ \ } \mathrm{gcd}(a_1,a_2,a_3)=\mathrm{gcd}(b_1,b_2,b_3)=1.\end{equation}  Goette, Kerin and Shankar define a 10-manifold \(P^{10}_{\underline{a},\underline{b}}\) using the  description in \sref{coho1prelim}  with groups 
	\[G=S^3\times S^3\times S^3\]
	\begin{equation}\label{groups}K_-=\{(e^{ia_1\theta},e^{ia_2\theta},e^{ia_3\theta})|\theta\in[0,2\pi]\}\cup \{(e^{ia_1\theta}j,e^{ia_2\theta}j,e^{ia_3\theta}j)|\theta\in[0,2\pi]\}\cong\Pin(2)\end{equation}  
    	\[K_+=\{(e^{jb_1\theta},e^{jb_2\theta},e^{jb_3\theta})|\theta\in[0,2\pi]\}\cup \{(e^{jb_1\theta}i,e^{jb_2\theta}i,e^{jb_3\theta}i)|\theta\in[0,2\pi]\}\cong\Pin(2)\]  
    	\[H=\{(q,q,q)|q\in Q\} .\]
    	
    \
    
    \noindent 	By \tref{GZ}, \(P^{10}_{\underline{a},\underline{b}}\) admits a \(G\) invariant metric of nonnegative sectional curvature.	If we assume further that

    \begin{equation}\label{cond2}\mathrm{gcd}(a_1,a_2\pm a_3)=\mathrm{gcd}(b_1,b_2\pm b_3)=1\end{equation}  then the subgroup
    	\[1\times \Delta S^3=\{(1,g,g)|g\in S^3\}\subset S^3\times S^3\times S^3\]
    	acts freely on \(P^{10}_{\underline{a},\underline{b}}\) and \(M^7_{\underline{a},\underline{b}}=(1\times\Delta S^3)\backslash P^{10}_{\underline{a},\underline{b}}\) is a 2-connected manifold with \(\zco{M_{\underline{a},\underline{b}}}{4}=\f{Z}_{|n({\underline{a},\underline{b}})|}\), where 
    	\begin{equation}\label{ndef}n({\underline{a},\underline{b}})=\ov{8}\det\mat{a_1^2}{b_1^2}{a_2^2-a_3^2}{b_2^2-b_3^2}.\end{equation}  See \cite {GKS} Section 2 for details.  It is immediate from the O'Neil formula that \(M^7_{\underline{a},\underline{b}}\) admits a metric of nonnegative sectional curvature.

    	%To study the exotic spheres which are not Milnor spheres, we restrict to a subfamily 
    	%\begin{equation}\label{mkdef}M_k=M_{(-3,-3,1),(1,4k+1,4k+1)}.\end{equation}
    	
    %\noindent 	In Corollary 2.9 of \cite{GKS}, the authors conclude that \(M_k\) is homeomorphic to \(S^7\).  
    %	In Corollary D, they compute 
    %	\begin{equation}\label{mu}\mu(M_k)=\frac{9}{56}(4k^4+4k^3+3k^2+k)\text{ mod }\f{Z}\end{equation}
    %	and observe that the manifolds
    %	\[M_{-3},M_{-1},M_{1},M_{2},M_{4},M_{8},M_{11},M_{15}\]
    %	represent every diffeomorphism type of those homotopy spheres which are not Milnor spheres, see \eqref{nm}.  We will use the fact that each \(M_k\) is diffeomorphic to infinitely many others in the family \(\{M_k\}_{k\in\f{Z}}\) in \tref{kfam} to conclude that for each \(k\), \(\modse{sec}(M_k)\) has infinitely many path components.  

    	In order to use the formulae of \sref{ks}, we want to identify \(M_{\underline{a},\underline{b}}\) with the boundary of an orbifold disc bundle and define appropriate metrics.   %Recall the definition 
    %	\[P_k=G\times_{K_-}D^2\cup G\times _{K_+}D^2\]
    %	with \(H\subset K_\pm\subset G=S^3\times S^3\times S^3\) given in \eqref{groups}.  
    	We inspect the action of \(J=1\times S^3\times S^3\) on \(P_{\underline{a},\underline{b}}.\)  Observing which elements of \(J\) are conjugate to elements of \(K_\pm\) or \(H\) we have
    	
    	\begin{lem}\label{genstab}
    			A point \(p\in P_{\underline{a},\underline{b}}\)  has nontrivial stabilizer in \(1\times S^3\times S^3\)  if and only if  
    		\[p=[g,0]\in G\times_{K_\pm}\{0\}\subset G\times_{K_\pm}D^2.\]
    		The stabilizer of such a point \(p\) is \(g\f{Z}_{|a_1|}g^{-1}\) in the case of \(K_-\)  or  \(g\f{Z}_{|b_1|}g^{-1}\) in the case of \(K_+\), where \(\f{Z}_{|a_1|}\subset J\) is generated by \((1,e^{2\pi i a_2/|a_1|},e^{2\pi i a_3/|a_1|})\) and \(\f{Z}_{|b_1|}\subset J\) is generated by \((1,e^{2\pi i b_2/|b_1|},e^{2\pi i b_3/|b_1|})\)
    		\end{lem}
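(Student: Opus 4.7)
My plan is to compute the stabilizer $J_p$ directly, case by case, using the decomposition of $P_{\underline{a},\underline{b}}$ as the union of two disc bundles $G\times_{K_\pm}D^2$ from \sref{coho1prelim}. For $p=[g,x]\in G\times_{K_\pm}D^2$, the full $G$-stabilizer is $G_p=g(K_\pm)_xg^{-1}$, so $J_p=J\cap G_p$. The key geometric input is that the $K_\pm$-action on $D^2$ is the linear extension of the transitive $K_\pm$-action on $\partial D^2=K_\pm/H$; viewing $D^2$ as the cone on $K_\pm/H$ makes this transparent. Hence $(K_\pm)_x=K_\pm$ when $x=0$, while $(K_\pm)_x$ is a $K_\pm$-conjugate of $H$ whenever $x\neq 0$.

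First I would dispose of the case $x\neq 0$, which also covers every principal orbit. In that case $G_p=hHh^{-1}$ for some $h\in G$, and a general element of $hHh^{-1}$ has the form $(h_1qh_1^{-1},h_2qh_2^{-1},h_3qh_3^{-1})$ for some $q\in Q$. For this element to lie in $J=1\times S^3\times S^3$, its first coordinate must equal $1$; but $h_1qh_1^{-1}=1$ forces $q=1$, so the element is the identity. Therefore $J_p=\{1\}$ for every such $p$.

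It remains to analyze $p=[g,0]$, where $G_p=gK_\pm g^{-1}$. Consider the $K_-$ case. Using the parameterization in \eqref{groups}, the first coordinate of an element of $gK_-g^{-1}$ is either $g_1e^{ia_1\theta}g_1^{-1}$ or $g_1(e^{ia_1\theta}j)g_1^{-1}$. The latter can never equal $1$, because that would force $j=e^{-ia_1\theta}\in\C$, which is impossible. So the condition becomes $e^{ia_1\theta}=1$, i.e.\ $\theta=2\pi n/a_1$ for some $n\in\Z$. Substituting back yields
\[
J_p \;=\; g\cdot\bigl\langle (1,e^{2\pi ia_2/a_1},e^{2\pi ia_3/a_1})\bigr\rangle\cdot g^{-1},
\]
a cyclic group whose order is exactly $|a_1|$ thanks to $\gcd(a_1,a_2,a_3)=1$ from \eqref{cond1}. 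The $K_+$ case proceeds identically after interchanging the roles of $i$ and $j$, producing a cyclic subgroup $G$-conjugate to the group $\Z_{|b_1|}$ described in the statement.

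No step poses a serious obstacle; the only subtlety is the identification of $(K_\pm)_x$ with a conjugate of $H$ for $x\neq 0$, which follows at once from the cone extension (equivalently, the orthogonal linear extension) of the disc action used in the Grove--Ziller construction recalled in \sref{coho1prelim}.
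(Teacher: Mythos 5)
Your proof is correct and follows the same approach the paper gestures at (the paper omits a proof, asserting the lemma follows by ``observing which elements of $J$ are conjugate to elements of $K_\pm$ or $H$''). You spell out that observation carefully: reduce to computing $J\cap g(K_\pm)_x g^{-1}$ via the associated-bundle description, dispose of nonzero $x$ by the ``first coordinate must be $1$'' argument on a conjugate of $H$, then solve $e^{ia_1\theta}=1$ in the $K_\pm$-parametrization, using $\gcd(a_1,a_2,a_3)=1$ to pin down the order of the resulting cyclic group.
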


    	%\begin{lem}\label{stab}
    	%	A point \(p\in P_k\)  has nontrivial stabilizer in \(1\times S^3\times S^3\)  if and only if  
    	%	\[p=[g_1,g_2,g_3,0]\in G\times_{K_-}\{0\}\subset G\times_{K_-}D^2.\]
    	%	The stabilizer of such a point \(p\) is
    	%	\[\f{Z}_3= \{(1,1,1),(1,1,g_3e^{2\pi i/3}g_3^{-1}),(1,1,g_3e^{4\pi i/3}g_3^{-1})\}\subset 1\times S^3\times S^3.\]
    	%	 \end{lem}
    	 
    %	 \begin{proof}
    	% If a nontrivial element \((1,g,h)\in1\times S^3\times S^3\) fixes a point  \(p\in P_k,\) then it must be conjugate to an element of \(K_-\) or \(K_+\).  The intersection of the conjugacy class of \((1,g,h)\) with \(K_+\) is trivial, and the intersection with \(K_-\) is contained in the set 
    	% \[\Gamma=\{(1,1,e^{2\pi i /3}),(1,1,e^{4\pi i /3})\}.\]
    	 
    %	\noindent  Since  \(\Gamma\cap H=\emptyset,\)  
    	% \(p\in G\times_{K_-}\{0\}\).  The stabilizer group of \(p\) in \(1\times S^3\times S^3\) is conjugate to \(\{(1,1,1)\}\cup\Gamma\) as given in the statement of the lemma.  	
    	% 	\end{proof}
    	 
    	 It follows that \[B_{\underline{a},\underline{b}}={J}\backslash P_{\underline{a},\underline{b}}\] is a 4-dimensional orbifold and \(P_{\underline{a},\underline{b}}\) is an orbifold principal \(S^3\times S^3\)  bundle over \(B_{\underline{a},\underline{b}}.\)  Let
    	 
    	  \[E_{\underline{a},\underline{b}}=J\bs (P_{\underline{a},\underline{b}}\times\ar{4})\] 
    	  \[W_{\underline{a},\underline{b}}=J\bs (P_{\underline{a},\underline{b}}\times D^4)\]
    	  \[\partial W_{\underline{a},\underline{b}}=J\bs( P_{\underline{a},\underline{b}}\times S^3)\]
  
  \
   
   \noindent 	  be the associated orbifold vector, disc, and sphere bundles.  Here \(J\) acts on \(\ar{4}\cong\f{H}\)  by quaternion multiplication 
    	 \((1,g,h)\cdot v=gvh^{-1}.\)  One checks that the map 
    	 \begin{align*}P_{\underline{a},\underline{b}}\times_{J}S^3&\to (1\times \Delta S^3)\backslash P_{\underline{a},\underline{b}}=M_{\underline{a},\underline{b}}\\
    	 [p,g]&\mapsto[(1,1,g)p]\end{align*}
    	is a diffeomorphism.  
    	
    	We have the bundle structure required to apply \lref{sbound} and \lref{sbase}.   In the following lemma we define the appropriate metrics.
    	
    	\begin{lem}\label{metrics}
    		\(W_{\underline{a},\underline{b}}\) admits a product-like metric \(g_{W_{\underline{a},\underline{b}}}\) such that {\normalfont sec}\((g_{W_{\underline{a},\underline{b}}})\geq0\) and\\ {\normalfont scal}\((g_{W_{\underline{a},\underline{b}}})>0\).  
    	\end{lem}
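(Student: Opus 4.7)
The plan is to construct $g_{W_{\underline{a},\underline{b}}}$ as a Riemannian orbifold submersion metric obtained by quotienting a $J$-invariant product metric on $P_{\underline{a},\underline{b}}\times D^4$, and then to arrange positive scalar curvature by shrinking the $D^4$ factor. First, by \tref{GZ}, take a $G$-invariant Grove--Ziller metric $g_P$ on $P_{\underline{a},\underline{b}}$ with $\sec(g_P)\geq 0$; in particular $g_P$ is $J$-invariant. Next, equip $D^4\subset\mathbb{H}$ with a rotationally symmetric metric $g_D=dr^2+f(r)^2\,g_{S^3}$, where $g_{S^3}$ is the unit round metric on $S^3$ and $f\colon[0,1]\to\mathbb{R}_{\geq 0}$ is smooth with $f(r)=\sin r$ in a neighbourhood of $r=0$, $f$ concave on $[0,1]$ and strictly concave away from a collar of $r=1$, and $f\equiv c>0$ on some collar of $r=1$. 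Because $J$ acts on $\mathbb{H}$ by left and right quaternionic multiplication it preserves both $dr^2$ and $g_{S^3}$, so $g_D$ is $J$-invariant; concavity of $f$ gives $\sec(g_D)\geq 0$; the flat collar makes $g_D$ cylindrical, hence product-like, near $\partial D^4$; and the warped-product formula $\text{scal}(g_D)=-6f''/f+6\bigl(1-(f')^2\bigr)/f^2$ combined with our choice of $f$ yields $\text{scal}(g_D)>0$ pointwise on $D^4$.

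For a scale parameter $t>0$, put the product metric $g_P\oplus t^2 g_D$ on $P_{\underline{a},\underline{b}}\times D^4$. It is $J$-invariant, has $\sec\geq 0$, and is product-like in the $r$-direction near $P_{\underline{a},\underline{b}}\times\partial D^4$. By \lref{genstab} the $J$-action on $P_{\underline{a},\underline{b}}\times D^4$ has only finite stabilisers (the only nontrivial ones are the finite groups $\Gamma_\pm$ at the points $([g,0],0)$), hence is almost free, and the quotient $W_{\underline{a},\underline{b}}$ inherits an orbifold submersion metric $g_W^{\,t}$ for which O'Neill's formula yields $\sec(g_W^{\,t})\geq 0$. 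Because $J$ fixes $\partial/\partial r$ pointwise and preserves the product splitting near $P_{\underline{a},\underline{b}}\times\partial D^4$, this splitting descends to a collar of $\partial W_{\underline{a},\underline{b}}$ on which $g_W^{\,t}$ is isometric to $g_{\partial W_{\underline{a},\underline{b}}}\oplus dr^2$, so $g_W^{\,t}$ is product-like.

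Finally, to arrange $\text{scal}(g_W^{\,t})>0$, I apply the B\'{e}rard--Bergery canonical-variation argument to the induced orbifold submersion $W_{\underline{a},\underline{b}}\to B_{\underline{a},\underline{b}}=J\backslash P_{\underline{a},\underline{b}}$, whose generic fibre is $(D^4,t^2 g_D)$ and whose singular fibres are $(D^4,t^2 g_D)/\Gamma_\pm$; in either case the fibre scalar curvature equals $t^{-2}\text{scal}(g_D)$, uniformly bounded below by a positive multiple of $t^{-2}$. The O'Neill--Besse scalar-curvature formula for a Riemannian submersion then expresses $\text{scal}(g_W^{\,t})$ as $t^{-2}\text{scal}(g_D)+O(1)$ as $t\to 0$ on the compact orbifold $W_{\underline{a},\underline{b}}$, the $O(1)$ term absorbing the scalar curvature of the base together with the O'Neill $|A|^2$, $|T|^2$, and $|N|^2$ contributions, all of which remain bounded (and $|A|^2$ in fact tends to zero) under this rescaling. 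For all sufficiently small $t$ this forces $\text{scal}(g_W^{\,t})>0$, and I set $g_{W_{\underline{a},\underline{b}}}=g_W^{\,t}$ for any such $t$. The main obstacle lies in this last step: the canonical-variation expansion must be justified in the orbifold setting and uniformly across the singular strata of $B_{\underline{a},\underline{b}}$ coming from the cyclic isotropy groups $\Gamma_\pm$ of \lref{genstab}, which requires working in local cohomogeneity-one trivialisations near those strata.
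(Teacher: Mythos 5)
Your construction of $g_{W_{\underline{a},\underline{b}}}$ by quotienting a $J$-invariant product metric on $P_{\underline{a},\underline{b}}\times D^4$, with an $\SO(4)$-invariant nonnegatively curved disc metric that is cylindrical near the boundary, is exactly the paper's construction, and the $\sec\geq 0$ and product-like conclusions are correct. The divergence is in the positive-scalar step, and there your argument has a genuine gap.

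The paper does not introduce a scale parameter at all. It observes that at each point of $P_{\underline{a},\underline{b}}\times D^4$ the $J$-horizontal space surjects onto $T_vD^4$ (the kernel of the projection lives in $T_pP$), so one can pick a horizontal $2$-plane whose $D^4$-components span a positively curved plane of $g_{D^4}$; the product curvature of that horizontal plane is then strictly positive, and by O'Neill the image plane in $W$ is positively curved. Combined with $\sec\geq 0$ everywhere this already gives $\mathrm{scal}>0$. This is the content of \cite[Theorem 2.1]{G}, which the paper cites.

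Your B\'erard--Bergery fibre-shrinking argument, as written, does not go through. First, the family $g_W^t$ obtained by quotienting $g_P\oplus t^2g_D$ by $J$ is \emph{not} a canonical variation of the submersion $W\to B$: the $J$-horizontal distribution inside $T(P\times D^4)$ is the $g_P\oplus t^2 g_D$-orthogonal complement of the $J$-action fields, which varies with $t$, and consequently so does the horizontal distribution of $W\to B$. Second, the induced metric on the fibre of $W\to B$ over $[p]$ is not $t^2g_D$ (resp.\ its $\Gamma_\pm$-quotient) but rather the Cheeger deformation of $t^2g_D$ by the $J$-action using $g_P|_{J\cdot p}$, since the fibre is the quotient of $(J\cdot p\times D^4,\,g_P|_{J\cdot p}\oplus t^2 g_D)$ by the diagonal $J$-action. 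Its scalar curvature is therefore not literally $t^{-2}\,\mathrm{scal}(g_D)$, and the assertion that the $|A|^2$, $|T|^2$, $|N|^2$ terms for $W\to B$ stay $O(1)$ in $t$ is not justified. So the Besse canonical-variation expansion you invoke does not directly apply, and the gap you flag at the end (the orbifold strata) is not the essential one. The $t\to 0$ intuition is sound and could likely be made rigorous, but it is substantially more work than the paper's one-line pointwise argument, and the extra parameter buys you nothing here since the $t=1$ metric already works.
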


    \begin{proof}
    	By \tref{GZ}, \(P_{\underline{a},\underline{b}}\) admits a \(G\) invariant metric \(g_{P_{\underline{a},\underline{b}}}\) with sec\((g_{P_{\underline{a},\underline{b}}})\geq0.\)  Let \(g_{D^4}\) be an \SO(4)  invariant metric on \(D^4\) with sec\((g_{D^4})\geq 0\) which is a product with a round metric near \(\partial D^4.\)  Define \(g_{W_{\underline{a},\underline{b}}}\)  such that \((P_{\underline{a},\underline{b}}\times D^4,g_{P_k}+g_{D^4})\to(W_{\underline{a},\underline{b}},g_{W_{\underline{a},\underline{b}}})\) is a Riemannian submersion.  Then \(g_{W_{\underline{a},\underline{b}}}\) is product like near \(\partial W_{\underline{a},\underline{b}}\), and by the O'Neil formula sec\((g_{W_{\underline{a},\underline{b}}})\geq0.\)  Furthermore, \(g_{W_{\underline{a},\underline{b}}}\) has a positively curved plane at each point, so scal\((g_{W_{\underline{a},\underline{b}}})>0;\)  see \cite{G} Theorem 2.1 for details.  The O'Neil formula applies locally on each orbifold chart, and thus the computations are identical to the manifold case.  
    \end{proof}

    Define \(g_{M_\du{a}{b}}=g_{W_{\underline{a},\underline{b}}}|_{M_{\underline{a},\underline{b}}}\).  Since \(g_{W_\du{a}{b}}\) is product like, \(\sec{g_{M_\du{a}{b}}}\geq 0\) and scal\((g_{M_\du{a}{b}})>0\).  Note that the metric \(g_{M_\du{a}{b}}\) differs from the metric, denoted \(g'_{M_{\underline{a},\underline{b}}}\),    which makes \((P_{\underline{a},\underline{b}},g_{P_{\underline{a},\underline{b}}})\to (M_{\underline{a},\underline{b}},g'_{M_{\underline{a},\underline{b}}})\) a Riemannian submersion; rather we have a Riemannian submersion   \((P_{\underline{a},\underline{b}}\times S^3,g_{P_{\underline{a},\underline{b}}}+g_{S^3})\to (M_{\underline{a},\underline{b}},g_{M_\du{a}{b}}),\)  where \(g_{S^3}\) is a round metric.  However, by increasing the radius of \(g_{S^3}\) towards \(\infty\) (a Cheeger deformation) , we obtain a continuous path of metrics with nonnegative curvature from \(g_{M_\du{a}{b}}\) to \(g'_{M_{\underline{a},\underline{b}}}\).

       % The arguments of the proof of \tref{kfam}  apply identically to \(W_{\underline{a},\underline{b}}\) and \(M_{\underline{a},\underline{b}}\) to show that \(M_{\underline{a},\underline{b}}\) admits a metric \(g\) of nonnegative curvature to which we can apply \lref{sbound} and \lref{sbase}.   Let \(E\to B\) be the orbifold vector bundle associated to the action of \(1\times S^3\times S^3\) on \(P_{\underline{a},\underline{b}},\) so that \(W_{\underline{a},\underline{b}}\subset E\) is the disc bundle.  The  integrals of characteristic forms in \cite{GKS} Lemma 7.9 and 7.11  yield
    
    \begin{lem}\label{swosint}    
   	%\(M_{\underline{a},\underline{b}}\) admits a metric \(g\) with nonnegative curvature such that 
   \begin{equation}\label{sform}s(M_{\underline{a},\underline{b}},g)=-\frac{|n({\underline{a},\underline{b}})|-{a_1^2b_1^2m({\underline{a},\underline{b}})^2}}{2^5\cdot 7n({\underline{a},\underline{b}})}-D(a_1;4,a_3+a_2,a_3-a_2)+D(b_1;4,b_3+b_2,b_3-b_2)\end{equation}
   where 
   
   \[m(\underline{a},\underline{b})=\ov{8a_1^2b_1^2}\det\mat{a_1^2}{b_1^2}{a_2^2+a_3^2+8}{b_2^2+b_3^2+8}\]
   and 	\[D(q;p_1,p_2,p_3)=\ov{2^5\cdot 7q^2}\sum_{l=1}^{(|q|-1)/2}\sum p_i\left(\frac{14\cos\left(\frac{p_i\pi l}{q}\right)+\cos\left(\frac{p_j\pi l}{q}\right)\cos\left(\frac{p_k\pi l}{q}\right)}{\sin^2\left(\frac{p_i\pi l}{q}\right)\sin\left(\frac{p_j\pi l}{q}\right)\sin\left(\frac{p_k\pi l}{q}\right)}\right)\]
    The second summation above is over each \((i,j,k) \in\{(1,2,3),(2,3,1),(3,1,2)\}.\)
   \end{lem}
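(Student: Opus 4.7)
The starting point is \lref{sbound} applied to the spin orbifold \(W=W_{\underline{a},\underline{b}}\) with product-like metric \(g_W=g_{W_{\underline{a},\underline{b}}}\) from \lref{metrics} and \(g=g_W|_{M_{\underline{a},\underline{b}}}\). Because \(\mathrm{scal}(g_W)>0\), the Lichnerowicz argument reviewed in \sref{dirac} forces \(\mathrm{ind}(D_{g_W})=0\). Thus the task is to compute four separate quantities:  \(\mathrm{sign}(W)\), the Pontryagin integral \(\int_W p\wedge q\), and the two singular inertia integrals \(\int_{\Lambda_s W}\hat A_\Lambda\) and \(\int_{\Lambda_s W}L_\Lambda\). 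The plan is to identify the first pair with the ``topological'' term \(-(|n|-a_1^2b_1^2m^2)/(2^5\!\cdot\!7\,n)\) via \lref{sbase}, and to identify the singular inertia integrals with the \(D\)-function terms via \tref{coho1}.

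For the base integrals needed in \lref{sbase}, I would realise \(B_{\underline{a},\underline{b}}=J\backslash P_{\underline{a},\underline{b}}\) together with the orbifold rank-four bundle \(E_{\underline{a},\underline{b}}\to B_{\underline{a},\underline{b}}\) as associated bundles to the orbifold principal \((S^3\times S^3)\)-bundle \(P_{\underline{a},\underline{b}}\to B_{\underline{a},\underline{b}}\). The orbifold Pontryagin and Euler classes can then be computed by pulling back generators of \(H^*(B(S^3\times S^3))\) through the classifying map of \sref{orbiclasses} and evaluating via Chern-Weil against the cohomogeneity one structure on \(P_{\underline{a},\underline{b}}\). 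One expects to obtain \(\int_B e(E)=\pm n(\underline{a},\underline{b})\) and \(\int_B p_1(TB)+\int_B p_1(E)=\pm 2a_1b_1\,m(\underline{a},\underline{b})\); feeding these into \lref{sbase} and combining with the signature contribution produces exactly the first summand of \eqref{sform}. The signs of \(\int_B e(E)\) and of the combined Pontryagin number must agree in such a way that \(\mathrm{sign}(W)=-\mathrm{sign}(n)\) so that \((\mathrm{sign}(W))/(2^5\!\cdot\!7)-(1/(2^7\!\cdot\!7))\int_W p\wedge q\) collapses to the stated closed form.

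For the inertia part I apply \tref{coho1} to \(P\times D^4\) with the subgroup \(J=1\times S^3\times S^3\), which by \lref{genstab} has exactly two singular strata with isotropies \(\Gamma_-=\mathbb{Z}_{|a_1|}\) and \(\Gamma_+=\mathbb{Z}_{|b_1|}\). In both cases \(F=J\backslash G\cong S^3\) and \(K_\pm^0/\Gamma_\pm\cong S^1\), so \(B_\pm\cong S^2\). I then compute the three characters \(\theta_{\pm,j}(\gamma)\) by decomposing the relevant representations: on the cohomogeneity one disc fibre \(\mathbb{R}^{d_\pm}=\mathbb{R}^2\), the generator of \(\Gamma_\pm\) acts by rotation by \(4\pi/|a_1|\) (respectively \(4\pi/|b_1|\)), contributing the ``\(4\)'' in \(D(a_1;4,\ldots)\) and \(D(b_1;4,\ldots)\); on \(V=\mathbb{H}\) split as \(\mathbb{C}\oplus\mathbb{C} j\), the action of \((1,e^{2\pi i a_2/a_1},e^{2\pi i a_3/a_1})\) via \(v\mapsto g v h^{-1}\) decomposes into the two eigenvalues \(e^{2\pi i(a_2\pm a_3)/a_1}\), giving the two remaining parameters \(p_2=a_3+a_2\), \(p_3=a_3-a_2\) (and likewise with \(b\) on the \(+\) side). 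Because \(B_\pm=S^2\) and only the degree-\(2\) part of \(\coth\!\bigl(x_{j,k}+i\theta_{\pm,j}/2\bigr)\prod\) and \(\mathrm{csch}\!\bigl(\tfrac{x_{j,k}}{2}+i\theta_{\pm,j}/2\bigr)\prod\) survives integration, expanding each product to first order in the Chern roots and using \(\int_{B_\pm}c_1\) of the relevant complex line bundles reduces the integrals to finite trigonometric sums indexed by \(\gamma\in\Gamma_\pm\setminus\{1\}\). Pairing \(\gamma\) with \(\gamma^{-1}\) cuts this sum in half (odd \(\sin\) terms cancel, even \(\cos\) terms double), producing the \(\sum_{l=1}^{(|q|-1)/2}\) range in the definition of \(D(q;p_1,p_2,p_3)\). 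Finally the weights \(14\) and \(1\) in the numerator of \(D\) arise from packaging the \(\hat A_\Lambda\) and \((1/(2^5\!\cdot\!7))L_\Lambda\) contributions into a single expression over the common denominator \(2^5\!\cdot\!7\,q^2\), while the signs \(\sigma_\pm,\ep_\pm(\gamma)\) from \tref{coho1} conspire with the \(\cos\)-factor prescription at odd-order elements to give the stated sign pattern (minus on the \(-\)-side, plus on the \(+\)-side).

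The main obstacle is the orbifold Chern-Weil computation of step~2: identifying \(\int_B e(E)\) with \(n(\underline{a},\underline{b})\) and the combined Pontryagin integral with \(2a_1b_1m(\underline{a},\underline{b})\), including consistent sign conventions compatible with those of \tref{coho1}. Once these base integrals are established, the singular inertia computation is a careful but mechanical trigonometric manipulation, whereas verifying that all signs coming from \(\sigma_\pm\), \(\ep_\pm\), and the orientation of \(\int_B e(E)\) collectively reproduce the stated overall signs in \eqref{sform} is where the most care is required.
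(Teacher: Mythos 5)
Your plan follows the paper's proof essentially verbatim: \lref{sbound} with $\mathrm{ind}(D_h)=0$ by Lichnerowicz, \lref{sbase} for the signature and Pontryagin terms, and \tref{coho1} with the representation-theoretic data at the two singular strata ($\Gamma_-=\mathbb{Z}_{|a_1|}$, $\Gamma_+=\mathbb{Z}_{|b_1|}$, $B_\pm\cong S^2$, the weights $4,\ a_2\pm a_3,\ b_2\pm b_3$, and the pairing of $\gamma$ with $\gamma^{-1}$ yielding the $\sum_{l=1}^{(|q|-1)/2}$ range). The only substantive difference is that the paper cites [GKS, Lemmas 7.9, 7.11] for the base integrals rather than re-deriving them.

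One small correction: the orbifold base integrals are $\int_B e(E)=-n/(a_1^2b_1^2)$ and $\int_B p_1(TB)+\int_B p_1(E)=-2m$, not $\pm n$ and $\pm 2a_1b_1 m$ as you wrote. (The orbifold Euler number is rational, not the integer $\pm n$.) Your slip is harmless here because the two errors cancel in the quotient $(\,\cdot\,)^2/\int_B e(E)$ that feeds into \lref{sbase}, and the sign of $\int_B e(E)$ is unaffected, so the stated final formula still comes out; but if you were to prove the base-integral claims you would need the corrected normalisation.
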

        	We note that up to a sign, which indicates a change in orientation, the right side of \eqref{sform} is equal to the right side of \cite[Thm. C]{GKS}, confirming \eqref{smu}.   %\(\mu(M_{\underline{a},\underline{b}})=s(M_{\underline{a},\underline{b}},g)\) mod \(\f{Z},\) as computed in .  
        	
        	First we set some notation.  Let \(C_w\) be the complex 1-dimensional representation of \(S^1\) of weight \(w\in\f{Z}\) (that is, \(z\in S^1\) acts on \(C_w=\f{C}\) by \(z^w\)).  Let \(E_{a,b}\to S^2\) be the orbifold complex line bundle over the manifold \(S^2\) defined by \(E_{a,b}=S^3\times_{S^1}C_b\to S^3/S^1,\) where \(z\in S^1\) acts on the right of \(S^3\) as \(z^{-a}.\)  Note \(S^1\) acts on \(S^3\) with innefective kernel \(\f{Z}_{|a|},\) but \(S^1/\f{Z}_{|a|}\) acts freely and thus \(S^3/S^1=S^2\) is a manifold.  We orient \(S^2\) such that the images in \(T_{[1]}S^3/S^1\) of \(j,k\) in the Lie algebra \(\mathfrak{s}^3\) of \(S^3\) form an oriented basis. Then

        	\begin{lem}\label{s2chern}
        		\[	\int_{S^2}c_1(E_{a,b})=-\frac{b}{a}.\]
        	\end{lem}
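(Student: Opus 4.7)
The plan is to compute $c_1(E_{a,b})$ by orbifold Chern--Weil theory: choose a connection on the orbifold principal $S^1$-bundle $S^3\to S^2$ for the right action $q\cdot z=qz^{-a}$ (which has ineffective kernel $\mathbb{Z}_{|a|}$), compute the curvature of the induced connection on the associated line bundle, and reduce the integral over $S^2$ to an integral over $S^3$ via the Hopf fibration.

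Let $\alpha,\beta,\gamma$ denote the left-invariant $1$-forms on $S^3$ dual to $i,j,k\in\mathfrak{s}^3$, so by Maurer--Cartan $d\alpha=-2\,\beta\wedge\gamma$.  The fundamental vector field of the right $S^1$-action at $q$ is $Y^{*}_q=\tfrac{d}{dt}\big|_{t=0}qe^{-iat}=-aqi$, so $\omega:=-\tfrac{i}{a}\alpha$ is an $\mathfrak{s}^1=i\mathbb{R}$-valued $1$-form with $\omega(Y^{*})=i$.  Because $i$ commutes with $S^1\subset S^3$, the form $\alpha$ is invariant under the right $S^1$-action, so $\omega$ is an orbifold connection $1$-form on $S^3\to S^2$.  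The weight-$b$ representation has derivative $b\cdot\mathrm{id}\colon\mathfrak{s}^1\to\mathfrak{u}(1)$, so the induced connection on $E_{a,b}$ has curvature $b\,d\omega=-\tfrac{bi}{a}d\alpha=\tfrac{2bi}{a}\beta\wedge\gamma$, and
\[
c_1(E_{a,b})=\frac{i}{2\pi}\,b\,d\omega=-\frac{b}{\pi a}\,\beta\wedge\gamma,
\]
a closed, horizontal, $S^1$-invariant $2$-form on $S^3$ which descends to the required class on $S^2$.

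To integrate, observe that $\alpha$ restricts to $2\pi$ on each Hopf-type fiber of $S^3\to S^2$ (the $S^1$-orbit is a Hopf circle, parametrized once around), so the projection formula reads $\int_{S^3}\pi^{*}\eta\wedge\alpha=2\pi\int_{S^2}\eta$ for any $2$-form $\eta$ on $S^2$.  Combined with $\int_{S^3}\alpha\wedge\beta\wedge\gamma=\mathrm{vol}(S^3)=2\pi^{2}$, this gives
\[
\int_{S^2}c_1(E_{a,b})=\frac{1}{2\pi}\int_{S^3}\alpha\wedge\Bigl(-\frac{b}{\pi a}\,\beta\wedge\gamma\Bigr)=-\frac{b}{2\pi^{2} a}\cdot 2\pi^{2}=-\frac{b}{a}.
\]

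The only delicate point is orientation bookkeeping: the lemma orients $S^2$ so that $(j,k)$ is a positive basis of $T_{[1]}S^2$, and one must check this matches the convention implicit above, namely that $\alpha\wedge\beta\wedge\gamma$ defines the positive orientation on $S^3$ and that the product ``Hopf fiber $\times$ base'' orientation recovers it.  At $q=1$ this is immediate: $\beta\wedge\gamma(j,k)=+1$, the Hopf fiber direction is $+i$, and $\alpha\wedge\beta\wedge\gamma(i,j,k)=+1$, so all conventions are consistent.  Apart from this bookkeeping and the standard Maurer--Cartan and volume computations, the proof is routine.
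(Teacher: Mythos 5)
Your proof is correct and gets the right answer, but it takes a somewhat different route than the paper's.  The paper first replaces the orbi-principal bundle $S^3\to S^2$ (whose $S^1$-action has ineffective kernel $\Z_{|a|}$) by the genuine principal $S^1$-bundle $S^3/\Z_{|b|}\to S^2$, which it identifies as the actual unit circle bundle of $E_{a,b}$ via the diffeomorphism $S^3\times_{S^1}S^1\to S^3/\Z_{|b|}$.  It then builds the connection from the bi-invariant metric (vertical projection) and reads off $c_1$ with the weight-one formula $c_1=R/(2\pi i)$, integrating directly on $S^2$ by identifying the horizontal $2$-form with $d\mathrm{vol}_{S^2}$ (radius $1/2$, area $\pi$).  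You instead stay on the orbibundle $S^3\to S^2$, take the Maurer--Cartan connection $\omega=-\tfrac{i}{a}\alpha$, push the principal curvature through the weight-$b$ representation, and then integrate by pulling back to $S^3$ via the projection formula $\int_{S^3}\pi^*\eta\wedge\alpha=2\pi\int_{S^2}\eta$.  Both are Chern--Weil computations driven by the same Lie-algebraic input ($[j,k]=2i$, equivalently $d\alpha=-2\beta\wedge\gamma$) and both handle the $\Z_{|a|}$ ineffectiveness correctly; the paper's approach has the slight advantage that once one passes to $S^3/\Z_{|b|}$ every step is the manifold-level computation for a free $S^1$-action, while yours is a bit more direct but requires keeping track of the weight $b$ explicitly and checking that $\omega$ is an orbifold connection.

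One small caution on the bookkeeping: the sentence ``the Hopf fiber direction is $+i$'' is not quite right as stated, since the action field of the right $S^1$-action at $q=1$ is $-aqi$, which points in the $-i$ direction when $a>0$.  What actually matters for the projection formula is that the fiber is oriented so that (fiber)$\times$(base) reproduces the orientation $\alpha\wedge\beta\wedge\gamma$ on $S^3$, which forces the fiber orientation to be $+i$ regardless of the sign of $a$; with that orientation $\int_{\text{fiber}}\alpha=2\pi$ and the projection formula holds with the $+2\pi$ normalization, as your final calculation requires.  Put differently, the fiber orientation is determined by the base and total-space orientations, not by the direction of the (possibly reversed, possibly $|a|$-to-one) group action, and it is worth stating that explicitly rather than appealing to the group action field.
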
 
        	
        	\begin{proof}
        		The principal \(S^1\) orbibundle of \(E_{a,b}\) is 
        		\(S^3\times_{S^1} S^1\subset E_{a,b}.\)  The diffeomorphism 
        			\[S^3\times_{S^1} S^1\to S^3/\f{Z}_{|b|},\]
        			\[[g,z]\mapsto[gz^{a/b}]\]
        			commutes with the projection to \(S^3/S^1\cong S^2\) and is equivariant with respect to the \(S^1\) action on the right of \(S^3\times_{S^1}S^1\) and the action \([g]\cdot z=[gz^{a/b}]\) of \(z\in S^1\) on the right of \(S^3/\f{Z}_{|b|}.\)  
        		
        		Give \(S^3/\f{Z}_{|b|}\) a metric locally isometric to the round  metric of radius 1 on \(S^3\) and give \(S^2\) the round metric with radius \(1/2.\)  Then \(S^3/\f{Z}_{|b|}\to S^2\) is a Riemannian submersion.  The metric on \(S^3/\f{Z}_{|b|}\) defines a principal connection for the \(S^1\) orbibundle.  An oriented orthonormal basis of \(T_{[1]}S^2\) lifts to \(\{j,k\}\subset T_{[1]}S^3/\f{Z}_{|b|}\cong T_1S^3.\) The action field of the \(S^1\) action corresponding to \(i\) in the Lie algebra \( \mathfrak{s}^1=\mathfrak{u}(1)\), which is left invariant, is given at \([1]\) is given by 
        		\[\left.\frac{d}{dt}\right|_{t=0}[e^{iat/b}]=\frac{a}{b}i.\] 
        		Thus the connection form \(\theta\in\Omega(S^3/\f{Z}_{|b|},\mathfrak{u}(1))\) is given by \[\theta(X)=\frac{\left<X,\frac{a}{b}i\right>}{\left|\frac abi\right|^2}i\] the curvature of the principal connection evaluated on the orthonormal basis of \(T_{[1]}S^2\) is
        		\[d\theta(j,k)={\left<-[j,k],\frac{b}{a}i\right>}i=-\frac{2b}{a}i\]   
        		
        		Since the metric on \(S^3/\f{Z}_{|b|}\) is homogeneous, the connection and curvature are homogenous.  Thus curvature  form on \(S^2\) is 
        		\[R=-\frac{2b}{a}id\text{vol}_{S^2}\] and the first Chern form is 
        		\[\frac{R}{2\pi i}=-\frac{b}{a\pi}d\text{vol}_{S^2}.\]
        		Thus 
        		\[\int_{S^2}c_1(E_{a,b})=-\frac{b}{a\pi}\text{vol}(S^2)=-\frac{b}{a}.\]

        	\end{proof}
        	
       %    Since \(\partial W_{\underline{a},\underline{b}}=M_{\underline{a},\underline{b}}\) is a manifold, the singular points of \(W_{\underline{a},\underline{b}}\) are away from the boundary.  It follows that \(\Lambda_sW_{\underline{a},\underline{b}}\) is a collection of closed orbifolds, and \(\int_{\Lambda_sW}\left(\hat{A}_\Lambda(h)+\ov{2^5\cdot 7}L_\Lambda(h)\right)\) does not depend on the metric \(h\); thus we have suppressed \(h\) in the notation of \lref{swosint}. 
        
        \begin{proof}[Proof of \lref{swosint}]
  To see that \(W_{\underline{a},\underline{b}}\) is spin, note that \(H^2(P_{\underline{a},\underline{b}},\f{Z}_2)=0\) and thus \(P_k\) is spin (this follows for instance from the Gysin sequence since \(\twoco{M_{\underline{a},\underline{b}}}{2}=0\) ).  The action of \(J\) splits \(T(P_{\underline{a},\underline{b}}\times D^4)=v\oplus h\) into a vertical bundle \(v\) and horizontal bundle \(h\).  Since \(J\) acts almost freely \(v\) is trivializable and spin.  It follows that \(h\) is spin. \(J\) acts on the frame bundle \(\SO(h)\) of \(h\) with quotient \(\SO(TW_{\underline{a},\underline{b}})\).  Since \(1\times S^3\times S^3\) is simply connected, that action lifts to an action on \(\Spin(h)\).  The quotient \(J\backslash \Spin(h)\) is a spin structure for \(W_{\underline{a},\underline{b}}.\)    
  
  Since \(\zco{M_{\underline{a},\underline{b}}}{4}\) is torsion, \(p_1(TM_{\underline{a},\underline{b}})=0\in H^4(M_\du{a}{b},\ar{})\) We can thus apply \lref{sbound} to \(M_{\underline{a},\underline{b}}=\partial W_{\underline{a},\underline{b}}\) with the metric \(g_{W_{\underline{a},\underline{b}}}\) defined in \lref{metrics} and \(g_{M_{\underline{a},\underline{b}}}=g_{W_{\underline{a},\underline{b}}}|_{\partial W_{\underline{a},\underline{b}}}.\)   Since \(W_{\underline{a},\underline{b}}\subset E_{\underline{a},\underline{b}}\to B_{\underline{a},\underline{b}}\) is an orbifold disc bundle we can apply \lref{sbase}. In \cite{GKS} Lemma 7.9 and 7.11 the authors integrate curvature forms of explicit connections to compute 
         \[\int_Be(E)=-\frac{n}{a_1^2b_1^2}\]
        \begin{equation}\label{generalbaseint}\int_Bp_1(TB)=-\ov{4a_1^2b_1^2}\det\mat{a_1^2}{b_1^2}{8}{8}\end{equation}
        \[\int_Bp_1(E)=-\ov{4a_1^2b_1^2}\det\mat{a_1^2}{b_1^2}{a_2^2+a_3^2}{b_2^2+b_3^2}\]
        so \(\int_Bp_1(TB)+\int_Bp_1(E)=-2m.\)  Note the signs are opposite those give in \cite{GKS}; we use the opposite orientation on \(B,\) for reasons described below.  Applying \lref{sbound} and \lref{sbase}, we have 
        \[s(M_{\underline{a},\underline{b}},g_\du{a}{b})=-\frac{|n({\underline{a},\underline{b}})|-{a_1^2b_1^2m({\underline{a},\underline{b}})^2}}{2^5\cdot 7n({\underline{a},\underline{b}})}-\int_{\Lambda_sW_{\underline{a},\underline{b}}}\left(\hat{A}_\Lambda+\ov{2^5\cdot 7}L_\Lambda\right)\]
	We  write \(W=W_{a,b}\) in the language of \tref{coho1} with\(J=1\times S^3\times S^3\) and \(V=\f{H}\) such that \(G\) acts by \((g_1,g_2,g_3)\cdot x=g_2xg_3^{-1}\).  Then  \(\Gamma_-=\f{Z}_{|a_1|}\subset \text{Pin(2)}\cong K_-\), and \(\Gamma_+=\f{Z}_{|b_1|}\subset \text{Pin(2)}\cong K_+.\)    that gcd\((a_1,a_2\pm a_3)=1\) and  gcd\((b_1,b_2\pm b_3)=1\) imply that \(\Gamma_\pm \) acts freely on \(V\bs\{0\}\) ( indeed, the gcd assumptions we made so that \(J\) would act freely on \(P_{\underline{a},\underline{b}}\times S^3\)).  The orders of \(\Gamma_\pm\) are odd and each nontrivial element has centralizer \(\Spin(2)\cong K_\pm^0\) in \(\Pin(2)\cong K_\pm\).

		The \(\Pin(2)\) conjugacy class of an element \(\gamma\in \Spin(2)\) is \(\{\gamma,\gamma^{-1}\}.\) We choose 
		\[S_-=\left\{\lambda^k| k=1,...,\frac{|a_1|-1}{2}\right\},\quad \lambda=e^{2\pi i/|a_1|}\]
		\[S_+=\left\{\mu^k|k=1,...,\frac{|b_1|-1}{2}\right\},\quad \mu=e^{2\pi i/|b_1|}.\]
		\(z\in Spin(2)\cong K_-^0\) acts on \(V=\f{H}\) by \(z^{a_2}\) on the left and \(z^{-a_3}\) on the right.   Under the  identification \(\f{H}=\f{C}^2\)  this is equal to the action of \(\diag(z^{a_2-a_3},z^{a_2+a_3})\). The diffeomorphism 
		\(K_\pm/H\cong \Pin(2)/Q\cong S^1\)
		is given by \([e^{i\theta}]\mapsto e^{4i\theta}\) and so \(z\in\Spin(2)\cong K_\pm^0\) acts on \(\ar{d_-}=\ar{d_+}=\ar{2}\cong \C\) by \(z^4.\)   
		
		Thus as representations of \(K_-^0\),
		\[\ar{2}\oplus V=V_{-,1}\oplus V_{-,2}\oplus V_{-,3}\]
		\[V_{-,1}=C_4, V_{-,2}=C_{a_2-a_3}, V_{-,3}=C_{a_2+a_3}\]
		with 
		\begin{equation}\label{-angles}\theta_{-,1}(\lambda^k)=\frac{8\pi k}{|a_1|}\, \theta_{-,2}(\lambda^k)=\frac{2(a_2-a_3)\pi k}{|a_1|}, \theta_{-,3}(\lambda^k)=\frac{2(a_2+a_3)\pi k}{|a_1|}\end{equation}
		
		By a similar argument, as representations of \(K_+^0\),
		\[\ar{2}\oplus V=V_{+,1}\oplus V_{+,2}\oplus V_{+,3}\]
		\[V_{+,1}=C_4, V_{+,2}=C_{b_2-b_3}, V_{+,3}=C_{b_2+b_3}\]
		with 
		\begin{equation}\label{+angles}\theta_{+,1}(\mu^k)=\frac{8\pi k}{|b_1|}\, \theta_{+,2}(\mu^k)=\frac{2(b_2-b_3)\pi k}{|b_1|}, \theta_{+,3}(\mu^k)=\frac{2(b_2+b_3)\pi k}{|b_1|}.\end{equation}

		Note that the angles given are not necessarily in to be in the range \((0,\pi)\) stipulated in the formulae of \lref{lgm} and \lref{coho1}.  This corresponds to the opposite choice of complex structure, which can be seen to lead to a possible sign change in the formulae; however, one checks that the possible sign change is compensated by the corresponding change in orientation.  
		
		The complex structure on \(C_4\) matches the orientation induced by \(\partial r\) and the action fields of the  \(S^1\) action, so \(\sigma_+=\sigma_-=1.\)  
		We can then conclude that since each \(a_i\) and \(b_i\) is odd, and the orders of \(\lambda^k\) and \(\mu^k\) divide \(|a_1|\) and \(|b_1|\) respectively,
		\[\ep(\lambda^k)=\cos(4\pi k)\cos((a_2-a_2)\pi k)\cos((a_2+a_3)\pi k)=1\]
		and 
		\[\ep(\mu^k)=\cos(4\pi k)\cos((b_2-b_2)\pi k)\cos((b_2+b_3)\pi k)=1.\]
		
		We see that 
		\[J\bs G/K_{\pm}^0=S^3/S^1=S^2\] where \(S^1\) acts on \(F=S^3\) with ineffective kernel \(\Gamma_\pm,\) but \(S^1/\Gamma_\pm\) acts freely.  Furthermore 
		\begin{align}\label{bundlepieces}F\times_{K_-^0}V_{-,1}&=S^3\times_{S^1}C_4=E_{a_1,4} & F\times_{K_+^0}V_{+,1}&=E_{b_1,4}\\
		F\times_{K_-^0}V_{-,2}&=E_{a_1,a_2-a_3} &	F\times_{K_+^0}V_{+,2}&=E_{b_1,b_2-b_3}\\
		F\times_{K_-^0}V_{-,3}&=E_{a_1,a_2+a_3} &
		F\times_{K_+^0}V_{+,3}&=E_{b_1,b_2+b_3}\end{align}

		Next we determine the proper orientation on \(W\) and \(J\bs G/K_\pm^0\) to apply apply \lref{sbase} and \lref{coho1}.  As described before \lref{coho1},  we give \(P_{\underline{a},\underline{b}}\times D^4\) the orientation  induced by the identification away from the singular orbits with 
		\[G/H\times(-1,1)\times D^4,\] where the orientation on \(D^4\) is induced by the complex structure on \(V=\f{H}\).  \(W\) inherits an orientation as the quotient by \(J.\)  To apply \lref{sbase}, we orient \(B_\du{a}{b}\) such that \(W\) is oriented as the total space of an oriented \(\f{H}=\ar{4}\) bundle with the standard orientation on the fibers.  That orientation on \(B_\du{a}{b}\) will correspond to the identification with 
		\[J\bs G/H\times (-1,1)=S^3/Q\times (-1,1)\]
		away from the singular locus.  Thus the orientation on \(B_\du{a}{b}\) is induced by \(\mathfrak{s}^3\oplus\text{span}\{\partial_r\},\) where \(r\) is the  coordinate on \(-1,1\).  This is opposite to the orientation used in \cite{GKS}, leading to the sign difference in \eqref{generalbaseint}.  
		
		We orient \(J\bs G/K_\pm^0\)  using the description before \tref{coho1}.   \(\mathfrak{s}^3\)  is oriented by the standard basis \(\{i,j,k\}\) and we have

	%	As described before \tref{coho1}, the orientation on \(P_{\underline{a},\underline{b}}\times D^4\) is induced by the identification away from the singular orbits with 
	%	\[G/H\times(-1,1)\times D^4,\] where the orientation on \(D^4\) is induced by the complex structure on \(V=\f{H}\).   The induced orientation on \(W\) corresponds to the orientation on the total space of an oriented orbifold vector bundle, with the standard orientation on \(\f{H}\) and the orientation on \(G\bs P_{\underline{a},\underline{b}}\) induced by \(\mathfrak{s}^3\times\text{span}(\partial_r)\) under the identification with \(S^3/H\times (-1,1)\) away from the singular locus.  Those are the orientations used in \lref{sbase} One checks (eq) the GKS use the orientation induced by \(\text{span}(\partial_r)\times\mathfrak{s}^3\), accounting for the sign difference in \eqref{generalbaseint}.   Finally, we must determine the proper orientation of \(J\bs G/K_\pm^0\cong S^2\) for the integrals in \tref{coho1}.  We have 
		\begin{align*}\mathfrak{g}&=\mathfrak{s}^3\oplus \mathfrak{s}^3\oplus \mathfrak{s}^3 & \mathfrak{k}_-&=\text{span}\{(a_1i,a_2i,a_3i)\}
		\\\mathfrak{j}&=1\oplus\mathfrak{s}^3\oplus \mathfrak{s}^3 & \mathfrak{k}_+&=\text{span}\{(b_1i,b_2i,b_3i)\}\end{align*}
		We see that \((j,0,0), (k,0,0)\) form an oriented basis of \((\mathfrak{j}+\mathfrak{k_-})^\perp\), as described before  \tref{coho1}, if and only if \(a_1>0.\)  Similarly \((j,0,0), (k,0,0)\) form an oriented basis of \((\mathfrak{j}+\mathfrak{k_+})^\perp\) if and only if \(b_1>0.\)  Since the basis  \((j,0,0), (k,0,0)\) induces the orientation on \(S^2\) used in \lref{s2chern}, we adjust that computation with the requisite sign. 
		
		Noting \(L(TS^2)=\hat{A}(TS^2)=1,\) we expand the \(\coth\) and csch terms in \tref{coho1} to first order in \(x_{\pm,j,k}\), and substitute the angles in \eqref{-angles} and \eqref{+angles} and the Chern classes if the bundles in \eqref{bundlepieces}, and integrate using \lref{s2chern} to find

		\[-\int_{\Lambda_sW_{\underline{a},\underline{b}}}\left(\hat{A}+\ov{2^5\cdot 7}L_\Lambda\right)=D(|a_1|;4,a_2-a_3,a_2+a_3)-D(|b_1|;4,b_2-b_3,b_2+b_3)\]
	%	and 
	%	\[s(M_{\underline{a},\underline{b}},g)=-\ov{2^5\cdot 7}\text{sign}(n)+\frac{{a_1^2b_1^2m^2}}{2^5\cdot 7n} +D(|a_1|;4,a_2-a_3,a_2+a_3)-D(|b_1|;4,b_2-b_3,b_2+b_3).\]

		The lemma follow by noting that \(D(q;p_1,p_2,p_3)\) is independent of the sign of \(q\) and permutations of the \(p_i,\) and changes sign when  \(p_i\) does.    
	\end{proof}

	\section{Diffeomorphism Invariants and Proof}\label{diffinv}
	In order to prove \tref{odd} and \tref{thmB} we identify diffeomorphisms between the manifolds \(M_{\underline{a},\underline{b}}\).  We use the classification of Crowley \cite{Cr}, described in \sref{exotic}, in terms of the invariants \(\mu\) and \(q\).  
	%two 2-connected 7-manifolds \(M\) and \(N,\) such that \(H^4(M,\f{Z})\) and \(\zco{N}{4}\)  are finite, are diffeomorphic if \(\mu(M)=\mu(N)\) and there is an isomorphism from \(\zco{M}{4}\) to \(\zco{N}{4}\) preserving a certain function \(q:\zco{\cdot}{4}\to \f{Q}/\f{Z}\) which is a quadratic refinement of the linking form involving the first Pontryagin class.
	
	 \(\mu(M_\du{a}{b})=s(M_\du{a}{b},g)\) mod \(\f{Z}\) is determined by \eqref{sform}, as computed in \cite[Theorem C]{GKS}.  In \cite[Theorem B]{GKSl}, the authors show that there is a generator \({\bf1}_{\du{a}{b}}\in\zco{M_{\underline{a},\underline{b}}}{4}\) such that the linking form is determined by
	\begin{equation}\label{linkingform}\text{lk}({\bf1}_\du{a}{b},{\bf1}_\du{a}{b})=\ov{n}\left(e_1b_1^2+e_0\left(\frac{b_2^2-b_3^2}{8}\right)\right)\text{ mod }\f{Z}\end{equation}
	where \(e_0,e_1\) are integers such that \(e_1a_1^2+e_0\left(\frac{a_2^2-a_3^2}{8}\right)=1.\)  
	
	Recall from \sref{exotic} that if \(H^4(M,\f{Z})\) has odd order and \(\mu(M)=\mu(N),\) then the existence of an isomorphism from \(\zco{M}{4}\to \zco{N}{4}\) preserving lk and \(p_1\) guarantees that \(M\) and \(N\) are diffeomorphic.  In \lref{pontlem} below we determine that if \(a_1\) and \(b_1\) are relatively prime, 
	\[p_1(TM_\du{a}{b})=\pm2a_1^2m(\du{a}{b}){\bf 1}_\du{a}{b}.\]
	\(m(\du{a}{b})\) is defined in the statement of \lref{swosint}.  
%	\begin{thm}
	%	For each \(M_{\underline{a},\underline{b}}\) with \(n(\underline{a},\underline{b})\neq0\) there exists \(M_{\underline{a'},\underline{b'}}\) such that \(\mu(M_{\underline{a},\underline{b}})=\mu(M_{\underline{a'},\underline{b'}})\) and an isomorphism  \(H^4(M_{\underline{a},\underline{b}},\f{Z})\to H^4(M_{\underline{a'},\underline{b'}},\f{Z})\) preserving the linking form such that \(\modse{sec}(M_{\underline{a'},\underline{b'}})\) has infinitely many components.  
%	\end{thm}
	
	\

\begin{proof}[Proof of Theorem A and Theorem C]		Let \(\underline{a}=(a_1,a_2,a_3)\) and \(\underline{b}=(b_1,b_2,b_3)\) satisfy \eqref{cond1} and \eqref{cond2} such that \(n(\du{a}{b})\neq 0,\) see \eqref{ndef}.  For each \(i\in\f{Z}\) define \[\underline{a}_i=(a_{i,1},a_{i,2},a_{i,3})=(a_1,a_2+a_1^2(b_3-b_2)i,a_3+a_1^2(b_3-b_2)i)\]
		\[\underline{b}_i=(b_{i,1},b_{i,2},b_{i,3})=(b_1,b_2+b_1^2(a_3-a_2)i,b_3+b_1^2(a_3-a_2)i)\]
		
		 One checks that \(n(\underline{a},\underline{b})=n(\underline{a}^i,\underline{b}^i)\).  	  Let \(e_{1,i}=e_1-\ov{4}i(b_3-b_2)(a_2-a_3).\)  
		 Then 
		 \[e_{1,i}a_1^2+e_0\left(\frac{(a_2+a_1^2(b_3-b_2)i)^2-(a_3+a_1^2(b_3-b_2))^2}{8}\right)=1\] and so 
		 \begin{align*}\mathrm{lk}(1_\du{a_i}{b_i},1_\du{a_i}{b_i})&=e_{1,i}b_1^2+e_0\left(\frac{(b_2+b_1^2(a_3-a_2)i)^2-(b_3+b_1^2(a_3-a_2)i)^2}{8}\right)\\&=e_1b_1^2+e_0\left(\frac{b_2^2-b_3^2}{8}\right)=\mathrm{lk}(1_\du{a}{b},1_\du{a}{b}).\end{align*}
		 Thus the isomorphism  \(\phi_i:\zco{M_\du{a}{b}}{4}\to\zco{M_\du{a_i}{b_i}}{4}\) which sends \(1_\du{a}{b}\) to \(1_\du{a_i}{b_i}\)  preserves  the linking form.
		 
		   Since \(D(q;p_1,p_2,p_3)\) is periodic in \(p_3\) with period \(2q\) (see \lref{swosint}), we see that 
		 \[D(a_1^i,4,a_2^i-a_3^i,a_2^i+a_3^i)=D(a_1,4,a_2-a_3,a_2+a_3+2a_1^2(b_3-b_2)i)=D(a_1,4,a_2-a_3,a_2+a_3)\] \[D(b_1^i,4,b_2^i-b_3^i,b_2^i+b_3^i)=D(b_1,4,b_2-b_3,b_2+b_3+2b_1^2(a_3-a_2)i)=D(b_1,4,b_2-b_3,b_2+b_3).\]  Thus the only term in \(s(M_{\underline{a}^i,\underline{b}^i},g_{\du{a_i}{b_i}})\) which depends on \(i\) is \[\frac{a_1^2b_1^2}{2^5\cdot7n(\underline{a},\underline{b})}m(\underline{a}^i,\underline{b}^i)^2=\frac{1}{2^5\cdot7n(\du{a}{b})a_1^2b_1^2}\left(A+Bi+Ci^2\right)^2\]
		 where  \(A,B,C\) are integers independent of \(i\) given by 	 
		 \[A=a_1^2b_1^2m(\du{a}{b})\]\[ B=\ov{4}\det\mat{a_1^2}{b_1^2}{a_1^2(a_2+a_3)(b_3-b_2)}{b_1^2(b_2+b_3)(a_3-a_2)}\]
		 \[C=\ov{4}\det\mat{a_1^2}{b_1^2}{a_1^4(b_3-b_2)^2}{b_1^4(a_3-a_2)^2}.\]
		 %\[m(\underline{a}^i,\underline{b}^i)=m(\underline{a},\underline{b})+\ov{8a_1^2b_1^2}\left(\det\mat{a_1^2}{b_1^2}{2a_1^2(a_2+a_3)(b_3-b_2)}{2b_2^2(b_2+b_3)(a_3-a_2)}i\right.\]\[+\left.\det\mat{a_1^2}{b_1^2}{2a_1^4(b_3-b_2)^2}{2b_2^4(a_3-a_2)^2}i^2\right).\]
		 Thus if \(i=0\) mod \(2^5\cdot 7n(\du{a}{b})a_1^2b_1^2\), \(\mu(M_\du{a_i}{b_i})=\mu(M_\du{a}{b}).\)	One checks that if \(B=C=0\),  then \(n(\underline{a},\underline{b})=0.\)  We conclude that  \(s(M_{\underline{a}^i,\underline{b}^i},g_{\du{a_i}{b_i}})\) is a nontrivial polynomial in \(i.\)  
		 
		 The manifolds \(M_\du{a_i}{b_i}, i\in2^5\cdot 7n(\du{a}{b})a_1^2b_1^2\f{Z}\) may fall into multiple diffeomorphism types.  However, since those manifolds have the same value of \(\mu\) and the same order of \(H^4\), and there are finitely many possibilities for \(q\) for a given finite group, they fall into finitely many diffeomorphism types. Thus there is an infinite set \(S\subset 2^5\cdot 7n(\du{a}{b})a_1^2b_1^2\f{Z}\) such that the manifolds \(M_\du{a_i}{b_i}, i\in S\) are all diffeomorphic.  But since \(s\) is given by a nontrivial polynomial in \(i\),  \(\{|s(M_\du{a_i}{b_i},g_\du{a_i}{b_i})||i\in S\}\) is infinite.   Since \(|s|\) is preserved when we pull back metrics between simply connected manifolds (see \sref{ksinv}), the  pullbacks of the metrics \(\{g_\du{a_i}{b_i}|i\in S\}\) to any \(M_\du{a_j}{b_j}\), \(j\in S\)  obtain an infinite set of values of \(|s|\), and by \lref{comps}  represents an infinite set of components of \(\modse{sec}(M_\du{a_j}{b_j})\).  This completes the proof of \tref{thmB}.

		 If \(n(\du{a}{b})\) is odd and gcd\((a_1,b_1)=1\), by \lref{pontlem}, replacing \(1_\du{a_i}{b_i}\) with \(-1_\du{a_i}{b_i}\) if necessary, we may assume \(p_1(TM_\du{a_i}{b_i})= 2a_1^2m(\du{a_i}{b_i})1_\du{a_i}{b_i}.\)  Note changing the sign of \(1_\du{a_i}{b_i}\) does not affect \eqref{linkingform}, and thus the isomorphisms \(\phi_i\) described above still preserve the linking form.   
		 
		 Then 
		 \[p_1\left(TM_\du{a_i}{b_i}\right)=\frac2{b_1^2}\left(A+Bi+Ci^2\right){\bf1}_\du{a_i}{b_i}\]	
		  and  if \(i=0\) mod \(n(\du{a}{b})\), \(\phi_i\) maps \(p_1\left(TM_\du{a}{b}\right)\) to  \(p_1\left(TM_\du{a_i}{b_i}\right).\)  Note that \(b_1\) and \(n(\du{a}{b})\) are relatively prime by assumption and \eqref{cond2}.  
		  
		Since \(n(\du{a}{b})\) is odd, the isomorphisms \(\phi_i\) and the values of \(\mu\) demonstrate that the manifolds \(M_\du{a_i}{b_i}, i\in2^5\cdot 7n(\du{a}{b})a_1^2b_1^2\f{Z}\) are all diffeomorphic, and thus all diffeomorphic to \(M_\du{a}{b}\) \((i=0)\).  We  repeat the analysis above with \(S=2^5\cdot 7n(\du{a}{b})a_1^2b_1^2\f{Z}\) and conclude that \(\modse{sec}(M_\du{a}{b})\) has infinitely many components, proving \tref{odd}.    \end{proof}

		To prove \cref{thm} we apply \tref{odd}  to the family
		\begin{equation}M_k=M_{(-3,-3,1),(1,4k+1,4k+1)}.\end{equation}	Since \(|n((-3,-3,1),(1,4k+1,4k+1))|=1\), \(M_k\) is homeomorphic to \(S^7.\)
		In \cite[Corollary D]{GKS}, the authors  compute 
		\begin{equation}\mu(M_k)=\frac{9}{56}(4k^4+4k^3+3k^2+k)\text{ mod }\f{Z}\end{equation}
		and observe that the manifolds
		\[M_{-3},M_{-1},M_{1},M_{2},M_{4},M_{8},M_{11},M_{15}\]
		represent every diffeomorphism type of those homotopy spheres which are not Milnor spheres, see \eqref{nm}.  The case of the Milnor spheres is demonstrated in \cite{D} and \cite{G} and also follows from \tref{odd}, using the observation from \cite{GZ} that every Milnor sphere is diffeomorphic to \(M_{(1,q_-,p_-),(1,q_+,p_+)}\) for proper values of \(q_\pm,p_\pm.\)

	%	By the work of Crowley \cite{Cr}, two 2-connected 7-manifolds \(M\) and \(N,\) such that \(H^4(M,\f{Z})\) and \(\zco{N}{4}\)  are finite, are diffeomorphic if \(\mu(M)=\mu(N)\) and there is an isomorphism from \(\zco{M}{4}\) to \(\zco{N}{4}\) preserving a certain function \(q:\zco{\cdot}{4}\to \f{Q}/\f{Z}\) which is a quadratic refinement of the linking form involving the first Pontryagin class. 

	\section{Pontryagin Class}\label{pontryagin}
	In this section we compute the first Pontryagin class of \(M_\du{a}{b}\) in the case where \(a_1\) and \(b_1\) are relatively prime and \(n(\du{a}{b})\) is odd.  The computation was used in the proofs in \sref{diffinv}. 	Let \(\du{a}{b}\) be triplets of integers satisfying \eqref{cond1} and \eqref{cond2}.  From the definition of \(P_\du{a}{b}\) in \sref{mfds} (see \sref{coho1prelim} and the proof of \lref{swosint}) the orbifold \(B_\du{a}{b}=1\times S^3\times S^3\bs P_\du{a}{b}\) can be described as 
	\begin{equation}\label{bdecomp}B_\du{a}{b}=S^3\times_{K_-}D^2\cup_{S^3/Q} S^3\times_{K_+}D^2\end{equation}
	Here \(e^{i\theta}\in K_- \cong \text{Pin}(2)\) acts on the right of \(S^3\) as \(e^{ia_1\theta}\) and on the left of \(D^2\) as \(e^{4i\theta},\) while \(j\) acts of the left of \(S^3\) as itself and trivially on \(D^2.\)  \(K_+\) acts similarly with \(a_1\) replaced by \(b_1\) and the roles of \(i\) and \(j\) reversed.  It follows that \(B_{\du{a}{b}}\) depends only on \(a_1\) and \(b_1,\) not on the entire triplets \(\underline{a}\) and \(\underline{b}.\)  Accordingly, we will use the notation \(B_{a_1,b_1}:=B_\du{a}{b}\) it what follows.  
	
	Let \(EJ\) be the contractible total space of the universal principal \(S^3\times S^3\) bundle.   We can represent the classifying space of the orbifold \(B_{a_1,b_1}\) (see \sref{orbiclasses}) as
	\[BB_{a_1,b_1}=J\bs (P_{\du{a}{b}}\times EJ)\]
	and of \(M_{\du{a}{b}}\) by 
	\[BM_\du{a}{b}={1\times \Delta S^3}\bs (P_{\du{a}{b}}\times EJ).\]
	 \(1\times \Delta S^3\) acts freely on \(P_\du{a}{b}\), so  \(BM_\du{a}{b}\) is homotopy equivalent to \(M_\du{a}{b}\), as we expect for a manifold.  We have an orbifold \(S^3\) bundle
	 \[S^3\to M_\du{a}{b}\xrightarrow{\pi} B_{a_1,b_1}\]
	 and a (standard) orientable \(S^3\) bundle 
	\begin{equation}\label{s3bundle}S^3\to BM_\du{a}{b}\xrightarrow{B\pi} BB_{a_1,b_1}.\end{equation}
Noting that \(P_\du{a}{b}\) and \(EJ\), and therefore \(BB_{a_1,b_1},\) are simply connected,  the Gysin sequence for that \(S^3\) bundle yields the exact sequence
\begin{equation}\label{gysin}0\to H^0(BB_{a_1,b_1},\f{Z})\to H^4(BB_{a_1,b_1},\f{Z})\xrightarrow{B\pi^*}\zco{M_\du{a}{b}}{4}\to0.\end{equation}
 	\begin{lem}\label{orbcoho4}
		\(\zco{B_{a_1,b_1}}{4}\cong\f{Z}\).  If \(a_1\) and \(b_1\) are relatively prime then \(H^4(BB_{a_1,b_1},\f{Z})\cong\f{Z}\).
	\end{lem}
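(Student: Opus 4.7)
The plan splits the proof into two parts, tackling each assertion in turn. For $\zco{B_{a_1,b_1}}{4}\cong\Z$, I would apply Mayer--Vietoris to the decomposition \eqref{bdecomp}, writing $B_{a_1,b_1}=B_-\cup_{B_0}B_+$ with $B_\pm=S^3\times_{K_\pm}D^2$ and $B_0=S^3/Q$. Each piece $B_\pm$ deformation retracts onto its 2-dimensional zero section $S^3/K_\pm$, so $H^j(B_\pm,\Z)=0$ for $j\ge 3$. The overlap $B_0=S^3/Q$ is a closed oriented 3-manifold (the quaternionic space form) with $H^3(B_0,\Z)\cong\Z$, and the Mayer--Vietoris sequence collapses to yield $\zco{B_{a_1,b_1}}{4}\cong H^3(B_0,\Z)\cong\Z$.

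For the classifying-space version, my plan is first to reduce to a torsion-vanishing statement. Observe that $BB_{a_1,b_1}$ is simply connected: as the Borel construction $BB_{a_1,b_1}\simeq J\bs(P_{\du{a}{b}}\times EJ)$, the long exact sequence of the principal $J$-fibration together with $\pi_1(P_{\du{a}{b}})=0$ and $\pi_0(J)=0$ gives $\pi_1(BB_{a_1,b_1})=0$. The Serre spectral sequence of the $S^3$-bundle \eqref{s3bundle}, combined with $BM_{\du{a}{b}}\simeq M_{\du{a}{b}}$ being 2-connected and having $H^i(M_{\du{a}{b}},\Z)=0$ for $i=1,2,3$, then forces $H^i(BB_{a_1,b_1},\Z)=0$ for $i=1,2,3$ and shows that the Euler class $e\in H^4(BB_{a_1,b_1},\Z)$ is nonzero. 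By the Universal Coefficient Theorem, $H^4(BB_{a_1,b_1},\Z)$ is torsion-free exactly when $H_3(BB_{a_1,b_1},\Z)=0$; combined with the fact that $H^4(BB_{a_1,b_1},\Z)$ has rational rank one (by orbifold Poincar\'e duality for $B_{a_1,b_1}$), establishing this vanishing finishes the proof.

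To show $H_3(BB_{a_1,b_1},\Z)=0$, I would apply Mayer--Vietoris to the homotopy pushout $BB_{a_1,b_1}\simeq BB_-\cup_{BB_0}BB_+$ at the classifying-space level. Here $BB_0\simeq S^3/Q$ is a smooth 3-manifold. Each $BB_\pm$ is the classifying space of the orbifold $B_\pm$ with its cyclic isotropy $\Gamma_\pm=\Z_{|a_\pm|}$ along the zero-section stratum; a spectral sequence computation for the Borel construction expressing $BB_\pm$ shows that all torsion in $H_\ast(BB_\pm,\Z)$ is $|a_\pm|$-primary. The hypothesis $\gcd(a_1,b_1)=1$ then makes the $|a_1|$-primary torsion from $BB_-$ and the $|b_1|$-primary torsion from $BB_+$ mutually coprime, and the Mayer--Vietoris sequence together with the vanishing of the connecting map from $H_3(BB_0)\cong\Z$ into these torsion pieces forces $H_3(BB_{a_1,b_1},\Z)=0$.

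The principal obstacle I anticipate is the careful spectral-sequence analysis of $H_\ast(BB_\pm,\Z)$ in low degrees: the Borel construction for the $K_\pm$-action is twisted through the embedding $K_\pm\hookrightarrow G=(S^3)^3$, and this twist interacts with the cyclic isotropy in a way that must be tracked explicitly to identify the primary parts. The oddness of $|a_\pm|$ (from \eqref{cond1}) and the coprimality assumption $\gcd(a_1,b_1)=1$ are precisely what make the primary-torsion decomposition argument go through in the Mayer--Vietoris connecting map.
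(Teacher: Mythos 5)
Your first paragraph, deducing $\zco{B_{a_1,b_1}}{4}\cong\Z$ via Mayer--Vietoris on \eqref{bdecomp}, is exactly the paper's argument, and is fine.

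For the classifying-space statement, your route is genuinely different from the paper's, but it has a real gap at the crucial step. The paper argues from the Gysin sequence \eqref{gysin} that $H^4(BB_{a_1,b_1},\Z)\cong\Z\oplus T$ with $|T|$ dividing $|n(\du ab)|$, and then exploits the key observation that $BB_{a_1,b_1}$ (hence $T$) depends only on $a_1,b_1$ while $n$ depends on the full triplets: plugging in two well-chosen pairs of triplets gives $|T|\mid a_1^2$ and $|T|\mid 2b_1^2$, and oddness plus $\gcd(a_1,b_1)=1$ forces $|T|=1$. No computation on the orbifold classifying spaces of the two halves is ever needed. Your proposal instead runs Mayer--Vietoris on $BB_{a_1,b_1}\simeq BB_-\cup_{BB_0}BB_+$. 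That strategy can be made to work, but the step you assert --- that ``the vanishing of the connecting map from $H_3(BB_0)\cong\Z$ into these torsion pieces forces $H_3(BB_{a_1,b_1},\Z)=0$'' --- is not a consequence of anything you've established. With $H_4(BB_\pm)=0$ and $H_2(BB_0)=H_2(S^3/Q)=0$ the Mayer--Vietoris sequence reads $0\to H_4(BB)\to\Z\xrightarrow{f}\Z_{|a_1|}\oplus\Z_{|b_1|}\to H_3(BB)\to 0$, so $H_3(BB)=\operatorname{coker}f$, and you need $f$ to be \emph{surjective}, i.e.\ that the image of the fundamental class of $S^3/Q$ generates $\Z_{|a_1|}\oplus\Z_{|b_1|}$. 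Coprimality of $a_1,b_1$ is necessary for this but nowhere near sufficient; surjectivity is a genuine geometric statement about how the circle bundle $B_0=S^3/Q\to\RP^2$ maps into the fiberwise $B\Z_{|a_1|}$- and $B\Z_{|b_1|}$-spaces $BB_\pm\to\RP^2$, and it requires a computation you never perform. One way to close the gap without that computation is to combine your Mayer--Vietoris with the Gysin constraint: the torsion $T\subset H^4(BB)$ embeds (via the cohomology Mayer--Vietoris) into $H^4(BB_-)\oplus H^4(BB_+)\cong\Z_{|a_1|}\oplus\Z_{|b_1|}$, so $|T|\mid a_1b_1$; together with $|T|\mid|n|$ and the arithmetic consequence $\gcd(a_1b_1,n)=1$ of \eqref{cond2} and $\gcd(a_1,b_1)=1$, one gets $|T|=1$. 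But your proposal as written does not make this combination.

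Two smaller points. First, the assertion that ``all torsion in $H_\ast(BB_\pm,\Z)$ is $|a_\pm|$-primary'' is false: $BB_\pm$ fibers over $\RP^2$ with fiber $B\Z_{|a_1|}$ (resp.\ $B\Z_{|b_1|}$), and already $H_1(BB_\pm)$ acquires a $\Z_2$ from $H_1(\RP^2)$. It happens that $H_3(BB_\pm)\cong\Z_{|a_1|}$ (resp.\ $\Z_{|b_1|}$) because the $2$-torsion of $\RP^2$ is killed by coprimality in the relevant degree, so the Mayer--Vietoris argument in degree $3$ is unaffected, but the blanket claim should be corrected. Second, the remark ``shows that the Euler class $e\in H^4(BB_{a_1,b_1},\Z)$ is nonzero'' is a red herring for the torsion question: nonvanishing of $e$ follows already from the nonzero orbifold Euler number and gives no information about $\operatorname{tors}(H^4)$.
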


\begin{proof}
%	Since only \(a_1\) and \(b_1\) are relevant, replace \(a_2,a_3,b_2\) and \(b_3\) as necessary such that  \(n(\du{a}{b})\) is odd. (REMOVE: WLOG assume 3 does not divide \(a_1\), and take \((a_1,1,5)\) and \((b_1,-3,5)\))    For ease of notation let \(n=n(\du{a}{b})\), and supress the \(\du{a}{b}\) and \(a_1,b_1\) subscripts in the  manifolds described above  \(P=P_\du{a}{b}\), \(W=W_\du{a}{b}\), \(M=M_\du{a}{b},\) and \(B=B_{a_1,b_1}.\)   
We consider the decomposition \eqref{bdecomp}. \(S^3/Q\) is an orientable 3-manifold.  \(S^3\times_{K_-}D^2\) deformation retracts onto \(S^3/K_-.\)  The action of  \(K_-\) on \(S^3\) has ineffective kernel \(\f{Z}_{|a_1|}\), and the action of \(K_-/\f{Z}_{|a_1|}\) is free with quotient \(\ar{}P^2.\)  	 The first statement of the lemma follows by applying the Meyer-Vietoris sequence to the decomposition.  
	 
	To prove the second statement, we examine the exact sequence  \eqref{gysin} to conclude that \(\zco{BB_{a_1,b_1}}4\cong\f{Z}\oplus T\), where \(T\) is a finite abelian group with order dividing \(n(\du{a}{b}).\)  Assume 3 does not divide \(b_1\).  Since \(T\) does not depend on \(a_2,a_3,b_2,b_3,\) we can apply this argument to triplets 
	 \[\underline{a}=(a_1,5,-3)\quad\text{and}\quad \underline{b}=(b_1,5,1)\]
	which satisfy \eqref{cond1} and \eqref{cond2} to conclude that \(|T|\) divides \(3a_1^2-2b_1^2\)
	 or to 
	 \[\underline{a}=(a_1,5,-3)\quad\text{and}\quad\underline{b}=(b_1,9,-7)\]
	 to conclude that \(|T|\) divides \(4a_1^2-2b_1^2.\)  Thus \(|T|\) divides \(a_1^2\) and  \(2b_1^2.\)  Since \(a_1\) and \(b_1\) are relatively prime and odd  \(|T|=1\).  If \(3\) divides \(b_1,\) we exchange  the values of \(a_2\) and \(b_2\), and \(a_3\) and \(b_3\), in the two pairs of triples used in the argument.

\end{proof}
\begin{lem}\label{pontlem}
	Let \(a_1\) and \(b_1\) be relatively prime.  Then 
	\[p_1(TM_\du{a}{b})=\pm2a_1^2m(\du{a}{b}){\bf 1}_\du{a}{b}.\]
\end{lem}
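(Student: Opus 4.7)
The plan is to exploit the orbifold \(S^3\)-bundle \(\pi\colon M_{\underline{a},\underline{b}}\to B_{a_1,b_1}\) constructed in \sref{mfds}, which realizes \(M_{\underline{a},\underline{b}}\) as the orbifold unit sphere bundle of the rank four orbifold vector bundle \(E_{\underline{a},\underline{b}}\to B_{a_1,b_1}\).  The local verification of the sphere-bundle identity \(TM\oplus\underline{\f{R}}\cong\pi^*(TB\oplus E)\) is \(\Gamma\)-equivariant in any orbifold chart and so descends, giving
\[p_1(TM_{\underline{a},\underline{b}})=\pi^*\bigl(p_1(TB_{a_1,b_1})+p_1(E_{\underline{a},\underline{b}})\bigr)\in\zco{M_{\underline{a},\underline{b}}}{4}.\]

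Next I would lift this identity to integer orbifold cohomology via the classifying space.  By \lref{orbcoho4}, under \(\gcd(a_1,b_1)=1\) we have \(H^4(BB_{a_1,b_1},\f{Z})\cong\f{Z}\); fix a generator \(\alpha\).  The orbifold Euler class satisfies \(e^{\mathrm{orb}}(E_{\underline{a},\underline{b}})=c\alpha\) for some integer \(c\), and surjectivity in the Gysin sequence \eqref{gysin} forces \(|c|=|n(\underline{a},\underline{b})|\).  Since orbifold characteristic classes agree with Chern--Weil classes in real cohomology (\sref{orbiclasses}), comparison with \(\int_{B_{a_1,b_1}}e(E_{\underline{a},\underline{b}})=-n(\underline{a},\underline{b})/(a_1^2b_1^2)\) from \cite{GKS} fixes \(\int_{B_{a_1,b_1}}\alpha=\mp 1/(a_1^2b_1^2)\).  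Writing \(p_1^{\mathrm{orb}}(TB_{a_1,b_1})+p_1^{\mathrm{orb}}(E_{\underline{a},\underline{b}})=d\alpha\) and using the integral \(-2m(\underline{a},\underline{b})\) recorded after \eqref{generalbaseint}, I read off \(d=\pm 2a_1^2b_1^2 m(\underline{a},\underline{b})\), whence
\[p_1(TM_{\underline{a},\underline{b}})=\pm 2a_1^2b_1^2 m(\underline{a},\underline{b})\cdot (B\pi)^*(\alpha),\]
with \((B\pi)^*(\alpha)\) a generator of \(\zco{M_{\underline{a},\underline{b}}}{4}\cong\f{Z}_{|n(\underline{a},\underline{b})|}\).

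The main obstacle is the last step: matching \((B\pi)^*(\alpha)\) against the specific generator \({\bf 1}_{\underline{a},\underline{b}}\) fixed in \cite{GKSl}.  Under the hypotheses \(\gcd(a_1,b_1)=1\) and \eqref{cond2}, \(b_1\) is coprime to \(n(\underline{a},\underline{b})\), so \(b_1^2\) is a unit in \(\f{Z}_{|n|}\) and the rational \(2a_1^2 m(\underline{a},\underline{b})=(b_1^2)^{-1}\cdot 2a_1^2b_1^2 m(\underline{a},\underline{b})\) is well defined there.  The desired formula is thus equivalent to the normalization \((B\pi)^*(\alpha)=\pm (b_1^2)^{-1}{\bf 1}_{\underline{a},\underline{b}}\); the factor of \(b_1^2\) is the only source of the asymmetry between the \(a_1\)- and \(b_1\)-sides in the conclusion.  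I expect to pin this normalization down by realizing \({\bf 1}_{\underline{a},\underline{b}}\) as the Poincar\'e dual of an explicit cycle coming from the singular orbit structure on the \(b_1\)-side (matching the construction in \cite{GKSl}) and then reading off \((B\pi)^*(\alpha)\) against that cycle from a Mayer--Vietoris description of \(H^4(BB_{a_1,b_1},\f{Z})\) using the decomposition \eqref{bdecomp}.  A cross-check is available through the linking form: the value of \(\mathrm{lk}((B\pi)^*(\alpha),(B\pi)^*(\alpha))\) computed directly from the orbifold disc bundle \(W_{\underline{a},\underline{b}}\) (as the mod \(\f{Z}\) reduction of the self-intersection of \(\alpha\)) must equal the value predicted by \eqref{linkingform} on \(\pm (b_1^2)^{-1}{\bf 1}_{\underline{a},\underline{b}}\), and the resulting congruence (together with the sign ambiguity in \(\alpha\)) should fix the identification.
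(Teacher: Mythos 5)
Your high-level strategy matches the paper's: pass to classifying spaces, use orbifold Pontryagin and Euler classes, exploit the Gysin sequence \eqref{gysin} and \lref{orbcoho4}, and feed in the Chern--Weil integrals from \cite{GKS} to pin down the integer multiple of the generator $\alpha\in\zco{BB_{a_1,b_1}}{4}$.  Your bookkeeping through the step $p_1(TM_{\underline{a},\underline{b}})=\pm 2a_1^2b_1^2\,m(\underline{a},\underline{b})\cdot(B\pi)^*(\alpha)$ is correct, as is the reduction of the remaining claim to the normalization $(B\pi)^*(\alpha)=\pm(b_1^2)^{-1}{\bf 1}_{\underline{a},\underline{b}}$.

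The gap is precisely what you call ``the main obstacle'': you do not establish that normalization, you only gesture at two possible routes.  The paper's proof handles this differently and explicitly.  Instead of working with $(B\pi)^*(\alpha)$ directly, it passes through the map $\pi^*\colon\zco{B_{a_1,b_1}}{4}\to\zco{M_{\underline{a},\underline{b}}}{4}$ on the underlying spaces (equivalent to your claim via the factor $\pm a_1^2b_1^2$ relating the generators of $\zco{B}{4}$ and $\zco{BB}{4}$), and computes $\pi^*(\beta)=\pm a_1^2{\bf 1}_{\underline{a},\underline{b}}$ by (i) using excision to compare $\zco{B,B_+}{4}\to\zco{M,M_+}{4}$ (this is where $\bf 1$ lives by the definition in \cite{GKSl}), (ii) passing to the long exact sequences of pairs for the decompositions $B=B_-\cup_{B_0}B_+$ and $M=M_-\cup_{M_0}M_+$, and (iii) plugging in the explicit identification of $\zco{M_0}{3}\cong\f{Z}^2$ and the images of the two restriction maps from the proof of \cite[Theorem 2.8]{GKS} to read off the multiplier $\pm a_1^2$.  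That input from \cite{GKS} is the concrete ingredient your proposal lacks; without it neither of your two suggested routes closes the argument.  In particular the linking-form cross-check you propose is only a necessary condition (several classes can share the same self-linking value), so it cannot by itself fix $(B\pi)^*(\alpha)$ up to sign.  Thus the proposal is a correct plan at the structural level, but the decisive step is left unproved.
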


\begin{proof}
For ease of notation let \(n=n(\du{a}{b})\), \(m=m(\du{a}{b})\), and suppress the \(\du{a}{b}\) and \(a_1,b_1\) subscripts in the  manifolds and orbifolds described above.

Since \(M\) is a manifold, \(p_1(TM)\) and \(p_1^\text{orb}(TM)\) are identified by the homotopy equivalence \(M\simeq BM.\)  If \(Bi:BM\to BW\) is the inclusion, then \(p_1(TM)=Bi^*p_1^\text{orb}(TW).\)  Since \(TW\) is isomorphic to the sum of the pullbacks of \(TB\) and \(E\), and \(Bi\) and \(B\pi\) commute with the projection from \(BW\) to \(BB\), \(p_1(TM)=B\pi^*(p_1^\text{orb}(TB)+p_1^\text{orb}(E)).\)  Consider the commuting diagram 

\[
\begin{tikzcd}\zco{M}{4}\arrow{d}&\arrow{l}{\pi^*}\zco{B}{4}\arrow{r}\arrow{d}& H^4(B,\f{R})\arrow{d}\\
\zco{BM}{4}&\arrow{l}{B\pi^*}\zco{BB}{4}\arrow{r}& H^4(BB,\f{R})
\end{tikzcd}
.\]
 
	By \lref{orbcoho4} we can identify  the rightward horizontal maps with the inclusion \(\f{Z}\hookrightarrow\ar{}.\)  By \eqref{generalbaseint}   \(p_1(E)+p_1(TB)=-2m\in H^4(B,\ar{}).\) Recall from \sref{orbiclasses} that Chern-Weil and orbifold characteristic classes agree in \(H^4(BB,\ar{})\). Thus \(p_1^\text{orb}(TB)+p_1^\text{orb}(E)\) is the image in \(\zco{BB}{4}\) of \(-2m\in\zco{B}{4}.\)  We conclude that \(p_1(TM)=\pi^*(-2m).\)

	It remains to determine \(\pi^*\) in terms of the generator \({\bf1}\in\zco{M}{4}\) used to describe the linking form in \eqref{linkingform}.  In \cite[Section 3]{GKSl}, that generator is described using the decomposition \( M=M_-\cup_{M_0}M_+\), where 
	\[M_\pm=1\times \Delta S^3\bs G\times_{K_\pm}D^2\]
	\[M_0=1\times \Delta S^3\bs G/H.\] 
	
	Then \({\bf1}\) is the image of a generator of \(\zco{M,M_+}{4}\cong\f{Z}.\)  \ \(\pi\) maps the decomposition of \(M\) onto the decomposition \(B=B_-\cup_{B_0}B_+\) described in \eqref{bdecomp}.    Consider the commuting diagram
	
	\[\begin{tikzcd}
	\zco{M,M_+}{4}\arrow{r}&\zco{M}{4}\\\zco{B,B_+}{4}\arrow{u}\arrow{r}&\zco{B}{4}\arrow{u}{\pi^*}
	\end{tikzcd}.\]
  As seen in the proof of \lref{orbcoho4} \(B_+\) is homotopy equivalent to \(\ar{}P^2.\)  Thus	the long exact sequence of a pair implies that the bottom arrow is an isomorphism.  Identifying \(\zco{B,B_+}{4}\) and \(\zco{B}{4}\) with \(\f{Z}\) using that isomorphism and \lref{orbcoho4}, the left vertical map is multiplication by some integer \(k,\) and \(\pi^*(1)=k{\bf1}.\)

 %	Since \(\zco{BB_+}{3}\) and \(\zco{BB_+}{4}\) are torsion groups, \(\zco{BB,BB_+}{4}\) contains an element \(\gamma\) of infinite order which maps to \(k_1\neq0\in \zco{BB}{4}\) and \(k_2\in\zco{M,M_+}{4}\cong\f{Z}.\)  It follows that \(p_1(TM)=2\frac{k_2}{k_1}a_1^2b_1^2m\bf{1}.\)    \(k_1\) depends only on \(B\), and thus only on \(a_1\) and \(b_1.\)  We finish the proof of the lemma by showing that \(k_2\) depends, up to a sign, only on \(a_1\).  
 	
 	To determine \(k\), we use excision to identify \(\zco{B,B_+}{4}\to\zco{M,M_+}{4}\) with the rightmost vertical map in the following commuting diagram with exact rows:
 	\[\begin{tikzcd}
 	\zco{M_-}{3}\arrow{r}&\zco{M_0}{3}\arrow{r}&\zco{M_-,M_0}{4}\\\zco{B_-}{3}\arrow{r}\arrow{u}&\zco{B_0}{3}\arrow{r}\arrow{u}&\zco{B_-,B_0}{4}\arrow{u}
 	\end{tikzcd}\]

	Since \(B_-\) is homotopy equivalent to \(\ar{}P^2\), the bottom right horizontal map is an isomorphism.  In the proof of \cite[Theorem 2.8]{GKS} the authors determine that \(\zco{M_0}{3}\) can be identified with \(\f{Z}^2\) in such a way that the image of the upper left horizontal map is generated by \((\ov{8}\left(a_2^2-a_3^2\right),a_1^2)\) and the image of the central vertical map by \((1,0).\)  It follows that the rightmost vertical map is multiplication by \(k=\pm a_1^2,\) depending on the choice of generators.    
	
%	We next  consider the commuting diagram 
%	\[\begin{tikzcd}
	%\zco{M}{4}&\zco{BB}{4}\arrow{l}{B\pi^*}\arrow{r}&H^4(BB,\R)\\\zco{M}{4}\arrow{u}{\text{id}}&\zco{B}{4}\arrow{l}{\pi^*}\arrow{r}\arrow{u}&H^4(B,\R)\arrow{u}
	%\end{tikzcd}\]
	
%	The rightward horizontal arrows are both isomorphic to the inclusion \(\f{Z}\hookrightarrow\ar{}\).  We have seen that the rightmost vertical arrow is multiplication by \(\pm a_1^2b_1^2\).  The central vertical arrow is the same.  
We conclude that \(p_1(TM)=\pi^*(-2m)=\pm 2a_1^2m{\bf 1}\).  % lemma follows using \eqref{p1}.  

	 %Since \(a_1\) is relatively prime to \(n,\) \(l\) must be relatively prime to \(n.\)  
	% Consider the 

	 %But \(l\) depends only on \(B\) and \(BB,\) and this does not depend on \(a_2,a_3,b_2,b_3.\)  Let \(p\) be a prime.  Replacing \(\du{a}{b}\) with 
	%\[\underline{a}=(a_1,5,-3)\]
%	\[\underline{b}=(b_1,5,-3)\quad\text{ if } p=2\] 
%	\[\underline{a}=(a_1,p+4,p-4)\]
%\[\underline{b}=(b_1,p+4,p-4)\quad\text{ if } p=2\]
%	\[\underline{a}=(a_1,5,-3)\]
%\[\underline{b}=(b_1,5,-3)\quad\text{ if } p=2\]	
	%As \(\gamma\) was only required to be an element of infinite order, we can take it to be the image of a generator of \(\zco{BB_0}{3}\) corresponding to the standard orientation on \(BB_0\simeq S^3/Q.\)  By the analysis of ?? such a  generator which maps to \((0,1)\in\zco{M_0}{3}\) where \(\zco{M_0}{3}\) has been identified with \(\f{Z}^2\) in such a way that the image of \(\zco{M_-}{3}\) is generated by \(\left(a_1^2,(a_2^2-a_3^2)/8\right).\)  It follows that the image of \((0,1)\) in \(\zco{M_-,M_0}{4}\) is \(a_1^2\) times a generator, and \(k_2=\pm a_1^2.\)  
	
	% \(k_1(a_1,b_1);\) not that \(k\) onlt depends on \(a_1\) and \(b_1\) because \(B,\) which can be written as 
%	\[S^3\times_{K_-}D^2\cup_{S^3/Q}S^3\times_{K_+}D^2\]
%	depends only on \(a_1\) and \(b_1.\)  Similarly, generator of \(1\in\zco{BB}{4}\cong \f{Z}\) maps unde the upper left arrow to \(k_2(a_1,b_1){\bf 1}\) for some integer \(k_2(a_1,b_1).\)       

\end{proof}

	\section{Definitions of signs in \(\hat{A}_\Lambda\)}\label{signs}
	In this section, we give a full definition of the of the signs \(\ep(g)\) in \eqref{ahatgdef} and \(\ep_i\) in \lref{lgm}, and derive the formulas for them in the relevant cases.  
	
	Let \(a\in\SO(2m)\) be a block diagonal matrix with \(m\) \(2\times 2\) blocks \[\mat{\cos\phi_l}{-\sin\phi_l}{\sin\phi_l}{\cos\phi_l},\ l=1,...,m.\]  If \(\{e_1,...,e_{2m}\}\) is the standard basis of \(\ar{2m}\) and we consider \(\Spin(2n)\) as a subset of the Clifford algebra generated by that basis, then the two preimages of \(a\) are \(\pm\tilde{a}\in\Spin(2m)\) where
	
	\[\tilde a=\prod_{l=1}^m\left(\cos\left(\frac{\phi_k}{2}\right)+\sin\left(\frac{\phi_k}{2}\right)e_{2l-1}e_{2l}\right).\]
	Note that replacing \(\phi_k\) with \(\phi_k+2\pi\) does not change \(a,\) bu flips the sign on \(\tilde a.\)
	
	Consider the setup used to define \(\hat{A}_g(h)\) in \sref{index}.  That is, let \(U_p\) be an orbifold chart with isotropy group \(\Gamma_p\) and \(\Gamma_p\) invariant metric \(h\) for a spin orbifold \(X\), and let \(g\in\Gamma_p.\)  Then there is an orthonormal basis \(\{e_1,...,e_{2m},e_{2m+1},...,e_n\}\) for \(T_pU_p\) such that \(e_1,...,e_{2m}\in N_pU_p^g\), \(e_{2m+1},...,e_n\in T_pU_p^g,\) and 
	\[g\in\SO(N_pU^g_p)\subset \SO(T_pU_p)\]
	is given in that basis by a matrix of the form \(a\in\SO(2m)\cong\SO(N_pU_p^g).\)  Each \(\phi_k\) will be equal to some \(\theta_j\), as defined in \sref{index}, but values of \(\phi_k\) may be repeated.  The spin structure provides a specific lift \(\tilde g\in\Spin(2m)\cong\Spin(N_pU_p^g)\subset \Spin(T_pU_p)\) which will be given in the basis by \(\pm\tilde a.\)  \(\ep(g)\) in \eqref{ahatgdef} is defined such that \(\tilde g=\ep(g)\tilde a.\)As mentioned above, the definition of \(\ep(g)\) depends both on the particular lift, and the choice of \(\theta_j\in\{\theta_j+2\pi\f{Z}\}\).
	
	Now assume further that \(g\) has odd order \(r\).  Then \(g^r=\text{id}\in\SO(T_pU_p),\) so \(r\phi_k\in2\pi\f{Z}.\)  Since the lift of \(\Gamma_p\) to \(\Spin(T_pU_p)\) is isomorphic, \(\tilde g\) also has order \(r,\) and so in the basis
	\[1=\tilde g^r=\ep(g)^r\tilde a^r=\ep(g)\prod_{l=1}^m\cos\left(\frac{r\phi_l}{2}\right)=\ep(g)\prod_{j,k}\cos\left(\frac{r\theta_j}{2}\right)\]
	where \(k\) indexes the formal splitting of \(N_j,\) accounting for the repeated values of \(\phi_l\).  
	
	Next, consider the setup of \sref{quotientinertia}.  If \(G\bs M\) is a spin orbifold, then the horizontal bundle \(H\subset TM\) of the \(G\) action on \(M\) admits a \(G-\) invariant spin structure.  Let \((g,p)\in\bar I_i.\)  Then \(g\in\SO(H_p)\) lifts to \(\tilde g\in\Spin(H_p).\)  Identifying \(TM\) as a subbundle of \(T(G\times M),\) at \((g,p)\)  \(dT|_{T_pM}=g\in\SO(T_pM).\)     Using the decomposition in \eqref{projcomp}, we can find a basis of \(H_p\) such that \(dT|_{H_p}=g|_{H_p}\in\SO(T\overline{N}_{(g,p)})\subset \SO(H_p)\) is given by a matrix of the form of \(a\in\SO(2m)\cong\SO(\overline{N}_{(g,p)})\) as above, with \(\phi_k=\theta_{i,j}\), as defined in \sref{quotientinertia} for some \(i,j\).  Thus \(\tilde g\in\Spin(2m)\cong\Spin(\overline{N}_{(g,p)})\) is either \(\pm\tilde a\) and \(\ep_i\) is defined such that \(\tilde g=\ep_i\tilde a.\)  By the same argument as give above, if the order of \(g\) is an odd number \(r,\)
	\[\ep_i=\prod_{j,k}\cos\left(\frac{r\theta_{i,j}}{2}\right)\]
	with \(k\) indexing the formal splitting of \(\overline{N}_{i,j}\)

	 Finally, we complete the proof of \lref{lgm} by showing that for \((g,p)\in\bar I_i, \ep(g)=\ep_i\).  In that proof, we define a slice neighborhood \(U_p\subset M\) as an orbifold chart for \(G\bs M,\) such that \(T_pU_p=H_p.\)  Then the lift of \(g\in SO(T_pU_p)\) to \(\Spin(T_pU_p)\) is given by the lift of the \(G\) action on \(\SO(H)\) to \(\Spin(H)\), and thus \(\ep(g)\) and \(\ep_i\) are defined by the same lift.  


\begin{thebibliography}{9999}
	
	
	
	

\bibitem[ALR]{ALR}	A. Adem, J. Leida and Y. Ruan, 
	{\em Orbifolds and stringy topology}.
	 Cambridge University Press, Cambridge, (2007). Cambridge Tracts in Mathematics, 171.
	
	\bibitem[APS1]{APS} M. F. Atiyah, V. K. Patodi and I. M. Singer,
	{\em Spectral asymmetry and Riemannian geometry. I,} Math. Proc. Cambridge Philos. Soc. {\bf 77} (1975), 43--69.
	
	\bibitem[APS2]{APS2} M. F. Atiyah, V. K. Patodi and I. M. Singer,
	{\em Spectral asymmetry and Riemannian geometry. II,} Math. Proc. Cambridge Philos. Soc. {\bf 78} (1975), 405--432.
	
	\bibitem[BeGo]{BeGo}I. Belegradek and D. Gonz\'{a}lez-\'{A}lvaro,
		 {\em Diffeomorphic souls and disconnected moduli spaces of
			nonnegatively curved metrics}, Ann. Inst. Fourier {\bf 72},
		(2022), 109--122.
	
\bibitem[BoGi]{BG} B. Botvinnik, P. Gilkey,{\em The eta invariant and metrics of positive scalar curvature,}
Math. Ann. {\bf 302} (1995),  507–517.
	
	\bibitem[BGV]{BGV} M. Berline, E. Getzler and N. Vergne, {\em Heat kernels and Dirac operators,} Springer,
	Berlin-Heidelberg-New York, 1992.
	
	\bibitem[BKS]{BKS} I. Belegradek, S. Kwasik and R. Schultz,
	{\em Moduli spaces of non-negative sectional curvature and non-unique souls,} J. Diff. Geom. {\bf 89} (2011), 49--86.
	
	\bibitem[BW]{BW} C. B\"ohm and B. Wilking,
	{\em Nonnegatively curved manifolds with finite fundamental groups admit metrics with positive Ricci curvature,} Geom. Funct. Anal. {\bf 17} (2007), 665--681.

	\bibitem[BT]{BT}R. Bott and L. Tu, {\em Differential forms in algebraic topology},
	 Springer-Verlag, New York-Berlin, (1982). Graduate Texts in Mathematics, 82.

\bibitem[Ca]{C} R. Carr, {\em Construction of manifolds of positive scalar curvature}.
Trans. Amer. Math. Soc. {\bf 307} (1988),  63–74.

\bibitem[Cr]{Cr} D. Crowley, {\em The classification of highly connected manifolds in dimensions
		7 and 15},
Thesis (Ph.D.)--Indiana University, (2002).
	
	\bibitem[De]{D} A. Dessai,
	\emph{On the moduli space of nonnegatively curved metrics on Milnor spheres}, (2017) arXiv:1712.08821v2.
	


\bibitem[DG]{DG} A. Dessai and D. Gonz\'{a}lez-\'{A}lvaro {\em Moduli space of metrics of nonnegative sectional or positive Ricci curvature on homotopy real projective spaces}, Trans. Amer. Math. Soc. {\bf 374} (2020), 1-33.

	\bibitem[DKT]{DKT} A. Dessai, S. Klaus and  W. Tuschmann,
	\emph{Nonconnected moduli spaces of nonnegative sectional curvature metrics on simply connected manifolds}, 	Bull. Lond. Math. Soc. {\bf 50} (2018), 96–107.
	
	\bibitem[EK]{EK}J. Eells and  N. Kuiper,
	{\em An invariant for certain smooth manifolds},
	Ann. Mat. Pura Appl. (4) {\bf 60} (1962), 93–110.
	
\bibitem[F]{F} C. Farsi, {\em Orbifold \(\eta-\)invariants}, 
Indiana Univ. Math. J. {\bf 56} (2007), 501–521.
	
\bibitem[GKS1]{GKS}
	S. Goette, M. Kerin and K. Shankar,  {\em Highly connected 7-manifolds and non-negative sectional curvature}, Ann. of Math. (2) {\bf 191} (2020), 829--892.
	
	\bibitem[GKS2]{GKSl} S. Goette, M. Kerin and K. Shankar,
	\emph{Highly connected 7-manifolds, the linking form and non-negative curvature}, (2020) arXiv:2003.04907.
	
	\bibitem[GM]{GM} D. Gromoll and W. Meyer, {\em An exotic sphere with nonnegative sectional curvature},
	Ann. of Math. (2) {\bf 100} (1974), 401–406.

		\bibitem[Go]{G} M. Goodman, {\em On the moduli spaces of metrics with nonnegative sectional curvature}, Ann. Global Anal. Geom. {\bf 57} (2020), 305--320.  
	
	
	
	\bibitem[GZ]{GZ}
	K. Grove and W. Ziller, \emph{Curvature and symmetry of Milnor spheres}, Ann. of Math. (2) {\bf 152} (2000), 331--367.
	
	\bibitem[Ka1]{K}T. Kawasaki
	{\em The signature theorem for V-manifolds}, Topology {\bf 17} (1978), 75–83.
	
\bibitem[Ka2]{K2}	T. Kawasaki,
	{\em The index of elliptic operators over V-manifolds},
	Nagoya Math. J. {\bf 84} (1981), 135–157.
	
\bibitem[KM]{KM}M. Kervaire and J. Milnor {\em On manifolds homeomorphic to the 7-sphere},
Ann. of Math. (2) {\bf 64} (1956), 399–405.
	
	
	\bibitem[KS]{KS} M. Kreck and S. Stolz,
	{\em Nonconnected moduli spaces of positive sectional curvature metrics,} J. Amer. Math. Soc. {\bf 6} (1993), 825--850.
	
	\bibitem[LM]{LM} H. Lawson and M. Michelsohn,  {\em Spin geometry,} Princeton University Press, Princeton, 1989,
	Princeton Mathematical Series, 38. 



\bibitem[M]{M}J. Milnor, {\em On manifolds homeomorphic to the 7-sphere},
Ann. of Math. (2) {\bf 64} (1956), 399–405.
	
\bibitem[S]{S} I. Satake, 
{\em The Gauss-Bonnet theorem for V-manifolds}
J. Math. Soc. Japan {\bf 9} (1957), 464–492.
	
%	\bibitem[Tu]{T} W. Tuschmann,
	%{\em Spaces and moduli spaces of Riemannian metrics,} Front. Math. China {\bf 11} (2016), 1335--1343.
	
%	\bibitem[TW]{TW} W. Tuschmann and D. Wraith,
%	{\em Moduli spaces of Riemannian metrics,} Birkh\"{a}user Verlag, 2015.
	
	\bibitem[We]{We} J. Wermelinger, {\em Moduli space of non-negative sectional or positive {R}icci
		curvature metrics on sphere bundles over spheres and their
		quotients} Geom. Dedicata {\bf 216} (2022), no.50. 
	

	
	
	\bibitem[Wr]{W}
	D. Wraith, \emph{On the moduli space of positive Ricci curvature metrics on homotopy spheres}, Geom. Topol. {\bf 15} (2011), 1983--2015.



\end{thebibliography}
\end{document}